\definecolor{labelkey}{rgb}{0,0.08,0.45}
\definecolor{refkey}{rgb}{0,0.6,0.0}
\definecolor{Brown}{rgb}{0.45,0.0,0.05}
\definecolor{lime}{rgb}{0.00,0.8,0.0}
\definecolor{lblue}{rgb}{0.5,0.5,0.99}
\newcommand{\seppfive}{\setlength{\itemsep}{-5pt}}
\def\namedlabel#1#2{\begingroup
   \def\@currentlabel{#2}%
   \label{#1}\endgroup
}
\newcommand*{\tran}{^{\mkern-1.5mu\mathsf{T}}}
\providecommand{\siff}{\Leftrightarrow}
\newcommand{\menge}[2]{\big\{{#1}~\big |~{#2}\big\}}
\newcommand{\To}{\ensuremath{\rightrightarrows}}
\newcommand{\fenv}[1]%
{\ensuremath{\,\overrightarrow{\operatorname{env}}_{#1}}}
\newcommand{\benv}[1]%
{\ensuremath{\,\overleftarrow{\operatorname{env}}_{#1}}}
\newcommand{\RR}{\ensuremath{\mathbb R}}
\newcommand{\NN}{\ensuremath{\mathbb N}}
\newcommand{\dom}{\ensuremath{\operatorname{dom}}}
\newcommand{\argmin}{\ensuremath{\operatorname{argmin}}}
\newcommand{\prox}{\ensuremath{\operatorname{Prox}}}
\newcommand{\ran}{\ensuremath{\operatorname{ran}}}
\newcommand{\zer}{\ensuremath{\operatorname{zer}}}
\newcommand{\Id}{\ensuremath{\operatorname{Id}}}
\providecommand{\pars}{\partial_{\mathmakebox[0.4em][l]{\textrm{\#}}}}
\crefname{enumi}{}{}
\crefname{equation}{}{equations}
\crefname{chapter}{Appendix}{chapters}
\crefname{item}{}{items}
\newtheorem{theorem}{Theorem}[section]
\newtheorem{lemma}[theorem]{Lemma}
\newtheorem{corollary}[theorem]{Corollary}
\newtheorem{proposition}[theorem]{Proposition}
\newtheorem{prop}[theorem]{Proposition}
\newtheorem{definition}[theorem]{Definition}
\newtheorem{defn}[theorem]{Definition}
\newtheorem{thm}[theorem]{Theorem}
\newtheorem{example}[theorem]{Example}
\newtheorem{fact}[theorem]{Fact}
\newtheorem{remark}[theorem]{Remark}
\newtheorem{rem}[theorem]{Remark}
\providecommand{\norm}[1]{\lVert#1\rVert}
\providecommand{\normsq}[1]{\lVert#1\rVert^2}
\providecommand{\bk}[1]{\left(#1\right)}
\providecommand{\innp}[1]{\langle#1\rangle}
\providecommand{\LA}{\Leftarrow}
\providecommand{\RA}{\Rightarrow}
\providecommand{\grad}{\nabla}
\providecommand{\lam}{\lambda}
\providecommand{\RR}{\mathbb{R}}
\providecommand{\ran}{\operatorname{ran}}
\providecommand{\dom}{\operatorname{dom}}
\newcommand{\fix}{\ensuremath{\operatorname{Fix}}}
\newcommand{\conic}{\text{conically nonexpansive}}
\providecommand{\gra}{\operatorname{gra}}
\providecommand{\Id}{\operatorname{{ Id}}}
\providecommand{\fady}{\varnothing}
\providecommand{\argmin}{\mathrm{arg}\!\min}
\providecommand{\rras}{\rightrightarrows}
\providecommand{\To}{\rightrightarrows}
\providecommand{\NN}{\mathbb{N}}
\providecommand{\fix}{\operatorname{Fix}}
\providecommand{\ran}{\operatorname{ran}}
\providecommand{\Id}{\operatorname{Id}}
\providecommand{\zer}{\operatorname{zer}}
\providecommand{\fady}{\varnothing}
\providecommand{\bhmon}{{$\rho$-comonotone}}
\providecommand{\bhmaxmon}{{maximally $\rho$-comonotone}}
\providecommand{\rhmon}{{$\rho$-comonotone}}
\providecommand{\rhmaxmon}{{maximally $\rho$-comonotone}}
\providecommand{\bmon}{{$\rho$-comonotone}}
\providecommand{\bmaxmon}{{maximally $\rho$-monotone}}
\providecommand{\RR}{\mathbb{R}}
\providecommand{\NN}{\mathbb{N}}
\definecolor{dgreen}{rgb}{0.00,0.49,0.00}
\definecolor{dblue}{rgb}{0,0.08,0.55}
\colorlet{minhblue}{dblue}
\colorlet{mygreen}{dgreen}
\definecolor{myblue}{rgb}{.8, .8, 1}
  \newcommand*\mybluebox[1]{%
    \colorbox{myblue}{\hspace{1em}#1\hspace{1em}}}
\begin{document}

%

\author{
Heinz H.\ Bauschke\thanks{
Mathematics, University
of British Columbia,
Kelowna, B.C.\ V1V~1V7, Canada. E-mail:
\texttt{heinz.bauschke@ubc.ca}.},
~ Walaa M.\ Moursi\thanks{
Department of Electrical Engineering,
Stanford University,
350 Serra Mall, Stanford, CA 94305,
USA
and
Mansoura University, Faculty of Science,
Mathematics Department,
Mansoura 35516, Egypt.
E-mail: \texttt{wmoursi@stanford.edu}.}
~and~ Xianfu Wang\thanks{
Mathematics, University of
British Columbia,
Kelowna, B.C.\ V1V~1V7, Canada. E-mail:
\texttt{shawn.wang@ubc.ca}.
}}

\title{\textsc
Generalized monotone operators\\
and their averaged resolvents}

\date{February 22, 2019}

\maketitle

\begin{abstract}
\noindent

The correspondence between the monotonicity
of a (possibly) set-valued operator and the firm
nonexpansiveness of its resolvent is
a key ingredient in the convergence analysis
of many optimization algorithms.
Firmly nonexpansive operators form a proper
subclass of the  more general --
but still pleasant from an algorithmic perspective --
class of averaged operators.
In this paper, we introduce the new notion of conically nonexpansive operators
which generalize nonexpansive mappings. 
We characterize averaged operators
as being resolvents of comonotone
operators under appropriate scaling.
As a consequence, we characterize the proximal point mappings associated with 
hypoconvex functions as cocoercive operators, or equivalently;
as displacement mappings of conically nonexpansive operators.
Several examples illustrate our analysis
and demonstrate tightness of our results.

\end{abstract}
{\small
\noindent
{\bfseries 2010 Mathematics Subject Classification:}
{Primary
47H05, 
47H09, 
Secondary
49N15, 
90C25.
}

\noindent {\bfseries Keywords:}
averaged operator,
cocoercive operator,
firmly nonexpansive mapping,
hypoconvex function,
maximally monotone operator,
nonexpansive mapping,
proximal operator.
}

\section{Introduction}

In this paper, we assume that
\begin{empheq}[box=\mybluebox]{equation*}
\label{T:assmp}
\text{$X$ is a real Hilbert space},
\end{empheq}
with inner product $\innp{\cdot,\cdot}$ and
induced norm $\norm{\cdot}$.
Monotone operators form a beautiful class
of operators that play a crucial role in
modern optimization.
This class includes subdifferential operators
of proper lower semicontinuous convex functions
as well as
matrices with positive semidefinite symmetric part.
(For detailed discussions on monotone operators
and the connection to optimization problems,
we refer the reader to
\cite{BC2017},
\cite{Borwein50},
\cite{Brezis},
\cite{BurIus},
\cite{Comb96},
\cite{Comb04},
\cite{Mord18},
\cite{Rock98},
\cite{Simons1},
\cite{Simons2},
\cite{Zeidler2a},
\cite{Zeidler2b}, and the references therein.)

The correspondence
between the maximal monotonicity
 of an operator
and the firm nonexpansiveness
of its \emph{resolvent} is of central importance
from an algorithmic perspective:
to find a critical point of the former, iterate the later!

Indeed, firmly nonexpansive operators
belong to the more general and pleasant class
of \emph{averaged} operators.
Let $x_0\in X$ and let $T\colon X\to X$
be averaged.
Thanks to the Krasnosel'ski\u{\i}--Mann
iteration (see \cite{krans}, \cite{Mann} and
also \cite[Theorem~5.14]{BC2017}),
the sequence $(T^n x_0)_{n\in \NN}$
converges weakly to a fixed point of $T$.
When $T$ is the \emph{proximal mapping}
associated with a proper lower
semicontinuous convex function $f$,
the set of
fixed points of $T$ 
is the set of
critical point of $f$; equivalently
the set of minimizers of $f$.
In fact, iterating $T$ is this case
produces the famous proximal point algorithm,
 see \cite{Rock76}.
\emph{The main goal of this paper is to answer the question:
Can we explore a new correspondence
between a set-valued operator and its resolvent
which generalizes the fundamental
 correspondence between
 monotone operators and
 firmly nonexpansive mappings  (see
 \cref{fact:corres})?
Our approach relies on the new notion of
\emph{\conic\ }operators as well as the
notions of $\rho$-monotonicity
(respectively $\rho$-comonotonicity)
which, depending on the value of $\rho$,
reduce to strong monotonicity, monotonicity or hypomonotonicity
(respectively cocoercivity, monotonicity or cohypomonotonicity).}

Although some correspondences between a monotone operator 
$(\rho\geq 0)$ and its resolvent
have been
established in \cite{BMW2}, our analysis here 
not only provides more quantifications and
but also goes beyond monotone operators.
We now
summarize the three main results
of this paper:

\begin{itemize}
\item[{\bf R1}]
\namedlabel{R:1}{\bf R1}
We show that, when $\rho>-1$,
the resolvent of a
$\rho$-monotone operator
as well as the resolvent of its
inverse
are  single-valued and have full domain.
This allows us to extend the classical
theorem by Minty
(see \cref{thm:minty}) to this class of operators
(see \cref{thm:Minty:type}).

\item[{\bf R2}]
\namedlabel{R:2}{\bf R2}
We characterize \conic\ operators
(respectively averaged operators and nonexpansive operators)
to be resolvents of
$\rho$-comonotone operators
with $\rho>-1$
(respectively $\rho>-\tfrac{1}{2}$ and $\rho\ge -\tfrac{1}{2}$)
(see \cref{cor:eq:nexp:-0.3} and also \cref{tab:1}).
\item[{\bf R3}]
\namedlabel{R:3}{\bf R3}
As a consequence of \ref{R:2},
 we obtain a novel characterization
 of the proximal point mapping associated
 with a \emph{hypoconvex} function\footnote{This is also
 known as \emph{weakly convex} function.}
  (under
 appropriate scaling of the function)
 to be
 a \conic\ mapping, or equivalently,
 the displacement mapping
 of a cocoercive
 operator (see \cref{prop:hypocon:av}).
\end{itemize}

The remainder of this paper is organized as
follows.
 \cref{sec:mono:comono}
is devoted to the study of the
 properties of $\rho$-monotone and
 $\rho$-comonotone operators.
 In \cref{sec:aver}, we provide a characterization
 of averaged operators as resolvents of
   $\rho$-comonotone operators.
 \cref{sec:JA:RA} provides
 useful correspondences between
 an operators and its resolvent
 as well as its reflected resolvent.
In \cref{sec:linear}, we focus on
$\rho$-monotone and
 $\rho$-comonotone linear operators.
In the final \cref{sec:hypocon},
 we establish the connection to hypoconvex functions.

The notation we use is standard and
follows, e.g.,
 \cite{BC2017} or \cite{Rock98}.

\section{$\rho$-monotone and $\rho$-comonotone operators}
\label{sec:mono:comono}

Let $A\colon X\rras X$. Recall that the
\emph{resolvent} of $A$
is
$J_A=(\Id+A)^{-1} $ and the
\emph{reflected resolvent} of $A$
is
$R_A=2J_A-\Id $,
where $\Id\colon X\to X\colon x\mapsto  x$.
The \emph{graph} of $A$
is $\gra A=\menge{(x,u)\in X\times X}{u\in Ax}$.
Let $T\colon X\to X$
and let $\alpha \in\left]0,1\right[$.
Recall that
\begin{enumerate}
  \item
  $T$ is \emph{nonexpansive}
  if $(\forall (x,y)\in X\times X)$
  $\norm{Tx-Ty}\le \norm{x-y}$.
\item
$T $ is \emph{$\alpha$-averaged}
if there exists a nonexpansive
operator $N\colon X\to X$
such that $T=(1-\alpha)\Id+\alpha N$;
equivalently, $(\forall (x,y)\in X\times X)$
we have
\begin{equation}
(1-\alpha)\norm{(\Id-T)x-(\Id-Ty)}^2
\le \alpha(\norm{x-y}^2-\norm{Tx-Ty}^2).
\end{equation}
\item
$T $ is \emph{firmly nonexpansive}
if $T$ is $\tfrac{1}{2}$-averaged.
Equivalently, if $(\forall (x,y)\in X\times X)$
$\normsq{Tx-Ty}+\normsq{(\Id-T)x-(\Id-T)y}\le \normsq{x-y}$.
\end{enumerate}

We begin this section by
stating the following two useful facts.
\begin{fact}{\rm (see, e.g.,
  \cite[Theorem~2]{EckBer})}
  \label{fact:corres}
Let $D$ be a nonempty subset of
$X$, let $T\colon D\to X$,
and
set $A=T^{-1}-\Id$. Then $T=J_A$.
Moreover, the following hold:
\begin{enumerate}
    \item
    $T$ is firmly nonexpansive if and only if
    $A$ is monotone.
    \item
    $T$ is firmly nonexpansive and $D=X$
    if and only if
    $A$ is maximally monotone.
\end{enumerate}
\end{fact}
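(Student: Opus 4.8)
The plan is to establish the graph identity $T=J_A$ by a direct computation, then deduce (i) from a one-line algebraic expansion of the firm nonexpansiveness inequality, and finally obtain (ii) by combining (i) with Minty's theorem. First I would unwind the definition of $A$: since $A=T^{-1}-\Id$, one has $\gra A=\menge{(Tx,\,x-Tx)}{x\in D}$, hence $\gra(\Id+A)=\menge{(Tx,\,x)}{x\in D}$, and therefore $\gra\big((\Id+A)^{-1}\big)=\menge{(x,\,Tx)}{x\in D}=\gra T$. In particular $J_A$ is single-valued and coincides with $T$, while $\dom J_A=D$ and $\ran(\Id+A)=\ran T^{-1}=D$; the last two identities are recorded for use in (ii).

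For (i) I would use the polarization-type identity
\begin{equation*}
\normsq{x-y}=\normsq{Tx-Ty}+2\innp{Tx-Ty,\,(x-Tx)-(y-Ty)}+\normsq{(x-Tx)-(y-Ty)},
\end{equation*}
valid for all $x,y\in D$ because $(Tx-Ty)+\big((x-Tx)-(y-Ty)\big)=x-y$. Substituting this into the defining inequality $\normsq{Tx-Ty}+\normsq{(\Id-T)x-(\Id-T)y}\le\normsq{x-y}$ of firm nonexpansiveness, the two squared norms cancel and the inequality collapses to $\innp{Tx-Ty,\,(x-Tx)-(y-Ty)}\ge 0$ for all $x,y\in D$. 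By the description of $\gra A$ above, a generic pair in $\gra A$ is exactly $(Tx,\,x-Tx)$ with $x\in D$, so this last inequality is precisely the monotonicity of $A$. Hence $T$ is firmly nonexpansive if and only if $A$ is monotone.

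For (ii) I would invoke Minty's theorem: a monotone operator $A$ is maximally monotone if and only if $\ran(\Id+A)=X$. Since the first step gives $\ran(\Id+A)=D$, the conjunction ``$A$ is monotone and $D=X$'' is equivalent to ``$A$ is monotone and $\ran(\Id+A)=X$'', hence, by Minty, to ``$A$ is maximally monotone''; combining this with (i) yields (ii). There is no substantive obstacle here: the result is classical (and is cited as such in the statement). The only points needing care are the set-valued bookkeeping in the first step---interpreting $T^{-1}-\Id$ and $(\Id+A)^{-1}$ at the level of graphs and composing inverses correctly---and, in (ii), the clean appeal to Minty's theorem, which supplies the full-domain/maximality link that cannot be extracted by elementary manipulation alone.
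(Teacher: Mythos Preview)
The paper does not give its own proof of this statement: it is recorded as a \emph{Fact} with a pointer to \cite[Theorem~2]{EckBer}, so there is nothing to compare against directly. Your argument is correct and self-contained. The graph computation showing $J_A=T$ and $\ran(\Id+A)=D$ is exactly the right bookkeeping; the polarization identity reduces firm nonexpansiveness to $\innp{Tx-Ty,(x-Tx)-(y-Ty)}\ge 0$, which under the parametrization $\gra A=\menge{(Tx,x-Tx)}{x\in D}$ is precisely monotonicity of $A$; and the appeal to Minty's theorem (\cref{thm:minty}) together with $\ran(\Id+A)=D$ handles (ii) cleanly in both directions. Your approach is also in the same spirit as the paper's later generalization in \cref{prop:gen:avn}, where the analogous equivalence for $\rho$-comonotone operators is proved via \cref{lem:av:ch} and the Minty-type parametrization of \cref{thm:Minty:type}.
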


\begin{fact}[{\bf Minty's Theorem}]{\rm\cite{Minty}
  (see also \cite[Theorem~21.1]{BC2017})}
\label{thm:minty}
Let $A\colon X\rras X$ be monotone. Then
\begin{equation}
\label{eq:Minty}
\gra A=\menge{(J_A x, (\Id-J_A)x)}{x\in \ran (\Id+A)}.
\end{equation}
Moreover,
\begin{equation}
\label{eq:Minty:2}
\text{$A$ is maximally monotone $\siff$ $\ran (\Id+A)=X$.}
\end{equation}
\end{fact}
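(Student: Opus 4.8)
The plan is to read off the graph identity \cref{eq:Minty} directly from the definition $J_A=(\Id+A)^{-1}$, and then to treat the two implications in \cref{eq:Minty:2} separately: the reverse one by an elementary ``no proper extension'' argument, and the forward (surjectivity) one by a compactness argument, which is the only genuinely non-routine ingredient. First I would note that monotonicity forces $J_A$ to be at most single-valued on $\ran(\Id+A)$: if $p_1,p_2\in J_Ax$, then $x-p_1\in Ap_1$ and $x-p_2\in Ap_2$, so $\innp{p_1-p_2,\,(x-p_1)-(x-p_2)}\ge 0$ reduces to $-\normsq{p_1-p_2}\ge 0$, whence $p_1=p_2$ (this single-valuedness is already implicit in \cref{fact:corres}). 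For the inclusion ``$\supseteq$'' in \cref{eq:Minty}, take $x\in\ran(\Id+A)$ and put $p:=J_Ax$; then $x\in p+Ap$, so $(\Id-J_A)x=x-p\in A(J_Ax)$. For ``$\subseteq$'', given $(a,u)\in\gra A$ set $x:=a+u$; then $x\in a+Aa$, hence $a\in(\Id+A)^{-1}x=J_Ax$, and single-valuedness gives $a=J_Ax$ and $u=x-a=(\Id-J_A)x$ with $x\in\ran(\Id+A)$. This proves \cref{eq:Minty}.

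For ``$\Leftarrow$'' in \cref{eq:Minty:2}, assume $\ran(\Id+A)=X$ and let $B$ be monotone with $\gra A\subseteq\gra B$; I want $\gra B=\gra A$. Given $(a,u)\in\gra B$, surjectivity of $\Id+A$ yields $p$ with $a+u\in p+Ap$, so $(p,\,a+u-p)\in\gra A\subseteq\gra B$; applying monotonicity of $B$ to this pair and to $(a,u)$ gives $\innp{a-p,\,u-(a+u-p)}\ge 0$, i.e.\ $-\normsq{a-p}\ge 0$, so $a=p$ and then $u=a+u-p\in Aa$. Hence $\gra B=\gra A$, so $A$ is maximally monotone.

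The hard part will be ``$\Rightarrow$'' in \cref{eq:Minty:2}; maximality is genuinely needed, since the monotone operator with $\gra A=\stb{(0,0)}$ has $\ran(\Id+A)=\stb{0}$. My plan is to show $\ran(\Id+A)$ is nonempty, closed, and open, and conclude from connectedness of $X$. Closedness follows from the material above: $J_A$ is nonexpansive on $\ran(\Id+A)$, so if $z_n\to z$ with $z_n\in\ran(\Id+A)$, then $(J_Az_n)_{n\in\NN}$ is Cauchy with limit $x$, and since $(J_Az_n,\,z_n-J_Az_n)\in\gra A$ and $\gra A$ is closed for maximally monotone $A$ (itself a consequence of maximality, by passing to the limit in the monotonicity inequality against any fixed pair of $\gra A$), we get $z-x\in Ax$, i.e.\ $z\in\ran(\Id+A)$. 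Openness --- equivalently, solvability of the inclusion $z\in x+Ax$ for every prescribed $z\in X$ --- is the real obstacle, and here a compactness input seems unavoidable; I would invoke the Debrunner--Flor theorem over a sufficiently large closed ball of $X$ (exploiting weak compactness of such balls) to produce a solution $x$, or alternatively run a continuation argument along the segment $[z_0,z]$ from a known point $z_0\in\ran(\Id+A)$, combining the just-established closedness with a local solvability step. Once surjectivity is secured, everything else reduces to bookkeeping with the definition of $J_A$ and the single monotonicity inequality.
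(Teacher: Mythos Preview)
The paper does not supply a proof of this statement at all: it is recorded as a \emph{Fact} with references to Minty's original paper and to \cite[Theorem~21.1]{BC2017}, and is then used as a black box (e.g.\ in the proofs of \cref{prop:surject} and \cref{thm:Minty:type}). So there is no ``paper's own proof'' to compare your proposal against; what you have written is a self-contained argument where the paper simply cites the literature.

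On the substance of your sketch: the graph identity \cref{eq:Minty} and the implication ``$\Leftarrow$'' in \cref{eq:Minty:2} are handled correctly and in the standard way. For ``$\Rightarrow$'', your clopen strategy is workable in principle, but the openness half is where all the content lies and your description there is too loose. Invoking Debrunner--Flor ``over a sufficiently large closed ball'' actually gives you surjectivity \emph{directly} (for any $z\in X$ it produces a point $(x,z-x)$ monotonically related to $\gra A$, hence in $\gra A$ by maximality), so routing it through an openness argument is a detour; and the alternative ``continuation along a segment'' needs a genuine local solvability step that you have not supplied --- as written it risks circularity, since local solvability is essentially the openness you are trying to prove. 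If you want a clean account, either run Debrunner--Flor (or the Fitzpatrick-function/Kirszbraun route) straight to surjectivity, or follow the textbook proof you cite.
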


\begin{defn}
Let $A\colon X\rras X$ and let
$\rho\in \RR$.
Then
\begin{enumerate}
\item
$A$ is
\emph{$\rho$-monotone}
 if $(\forall (x,u)\in \gra A)$
$(\forall (y,v)\in \gra A)$ we have
\begin{equation}
\label{eq:def:hmon}
\innp{x-y,u-v}\ge \rho\normsq{x-y}.
\end{equation}
\item
$A$ is
\emph{maximally $\rho$-monotone}
if $A$ is $\rho$-monotone
and there is no $\rho$-monotone
operator $B\colon X\rras X$
such that $\gra B$ properly contains $\gra A$,
i.e., for every $(x,u)\in X\times X$,
\begin{equation}
(x,u)\in \gra A ~\siff~ (\forall (y,v)\in \gra A)
~\innp{x-y,u-v}\ge \rho\normsq{x-y}.
\end{equation}
\item
$A$ is
\emph{\bhmon}
if $(\forall (x,u)\in \gra A)$
$(\forall (y,v)\in \gra A)$ we have
\begin{equation}
\label{eq:def:cohmon}
\innp{x-y,u-v}\ge \rho\normsq{u-v}.
\end{equation}
\item
$A$ is
\emph{\bhmaxmon} if $A$ is \bhmon\
and there is no \bhmon\ operator $B\colon X\rras X$
such that $\gra B$ properly contains $\gra A$,
i.e., for every $(x,u)\in X\times X$,
\begin{equation}
(x,u)\in \gra A ~\siff~ (\forall (y,v)\in \gra A)
~\innp{x-y,u-v}\ge \rho\normsq{u-v}.
\end{equation}
\end{enumerate}
\end{defn}
Some comments are in order.
\begin{rem}\
\begin{enumerate}
\item
When $\rho=0$,
both $\rho$-monotonicity of $A$
 and $\rho$-comonotonicity of $A$
 reduce to the monotonicity
 of $A$; equivalently
 to the monotonicity of $A^{-1}$.

\item
When $\rho< 0$, $\rho$-monotonicity
is known as $\rho$-\emph{hypomonotonicity},
see \cite[Example~12.28]{Rock98}
and
\cite[Definition~6.9.1]{BurIus}.
In this case,
the $\rho$-comonotonicity
is also known as
\emph{$\rho$-cohypomonotonicity}
(see \cite[Definition~2.2]{CombPenn04}).

\item
In passing, we point out that when
$\rho>0$, $\rho$-monotonicity of $A$
reduces to $\rho$-strong monotonicity of $A$,
while  $\rho$-comonotonicity of $A$
 reduces to $\rho$-cocoercivity\footnote{Let $\beta>0$
and let $T\colon X\to X$.
Recall
that $T$ is $\beta$-\emph{cocoercive}
if $\beta T$ is firmly nonexpansive,
i.e., $(\forall (x,y)\in X\times X)$
$\innp{x-y,Tx-Ty}\ge
\beta \normsq{Tx-Ty}$.
}
of $A$.

\end{enumerate}
\end{rem}

Unlike classical monotonicity,
$\rho$-comonotonicity of $A$ is \emph{not} equivalent to
 $\rho$-comonotonicity of $A^{-1}$.
Instead,  we have the following correspondences.

\begin{lemma}
\label{lem:bmon:inv}
Let $A\colon X\rras X$ and let $\rho\in \RR$.
The following are equivalent:
\begin{enumerate}
\item
\label{lem:bmon:inv:i}
$A$ is \rhmon.
\item
\label{lem:bmon:inv:ii}
$A^{-1}-\rho\Id$ is monotone.
\item
\label{lem:bmon:inv:iii}
$A^{-1}$ is $\rho$-monotone, i.e.,
$(\forall (x,u)\in \gra A^{-1})$ $(\forall (y,v)\in \gra A^{-1})$
$
\innp{x-y,u-v}\ge \rho \normsq{x-y}.
$
\end{enumerate}
\end{lemma}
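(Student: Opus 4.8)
The plan is to prove the chain of equivalences \ref{lem:bmon:inv:i} $\Leftrightarrow$ \ref{lem:bmon:inv:ii} $\Leftrightarrow$ \ref{lem:bmon:inv:iii} by unwinding the definitions and passing to the inverse graph. The key observation throughout is that $(x,u)\in\gra A$ if and only if $(u,x)\in\gra A^{-1}$, so the condition defining $\rho$-comonotonicity of $A$, namely $\innp{x-y,u-v}\ge\rho\normsq{u-v}$ for all $(x,u),(y,v)\in\gra A$, is literally the same set of inequalities as $\innp{u-v,x-y}\ge\rho\normsq{u-v}$ for all $(u,x),(v,y)\in\gra A^{-1}$ after the harmless relabelling $(a,a^*):=(u,x)$, $(b,b^*):=(v,y)$. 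Since $\innp{\cdot,\cdot}$ is symmetric, this is exactly the inequality $\innp{a-b,a^*-b^*}\ge\rho\normsq{a-b}$ for all $(a,a^*),(b,b^*)\in\gra A^{-1}$, which is the definition of $\rho$-monotonicity of $A^{-1}$ (compare \eqref{eq:def:hmon}). This gives \ref{lem:bmon:inv:i} $\Leftrightarrow$ \ref{lem:bmon:inv:iii} immediately.

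Next I would establish \ref{lem:bmon:inv:iii} $\Leftrightarrow$ \ref{lem:bmon:inv:ii}. Write $B:=A^{-1}$; then $B-\rho\Id$ has graph $\menge{(a,a^*-\rho a)}{(a,a^*)\in\gra B}$. Monotonicity of $B-\rho\Id$ means that for all $(a,a^*),(b,b^*)\in\gra B$,
\begin{equation*}
\innp{a-b,(a^*-\rho a)-(b^*-\rho b)}\ge 0,
\end{equation*}
and expanding the inner product by bilinearity gives $\innp{a-b,a^*-b^*}-\rho\innp{a-b,a-b}\ge 0$, i.e. $\innp{a-b,a^*-b^*}\ge\rho\normsq{a-b}$. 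This is precisely the $\rho$-monotonicity inequality for $B=A^{-1}$, so the two statements are equivalent. Combining the two equivalences yields the lemma.

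There is really no serious obstacle here; the result is essentially a bookkeeping exercise once one is careful about which of $\norm{x-y}$ and $\norm{u-v}$ appears on the right-hand side. The one point that warrants a sentence of care is the asymmetry flagged just before the lemma: $\rho$-comonotonicity of $A$ is \emph{not} the same as $\rho$-comonotonicity of $A^{-1}$, precisely because in \eqref{eq:def:cohmon} the squared norm is $\normsq{u-v}$ (the ``output'' variables), so swapping to $A^{-1}$ turns the output variables into input variables and one lands on $\rho$-monotonicity rather than $\rho$-comonotonicity. I would make sure the relabelling step is written explicitly enough that the reader sees why symmetry of the inner product is what makes \ref{lem:bmon:inv:i} $\Leftrightarrow$ \ref{lem:bmon:inv:iii} go through, while the $\normsq{u-v}$ term is what prevents a spurious self-duality.
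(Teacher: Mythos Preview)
Your proof is correct. The paper proves the equivalence via the cycle \ref{lem:bmon:inv:i}$\Rightarrow$\ref{lem:bmon:inv:ii}$\Rightarrow$\ref{lem:bmon:inv:iii}$\Rightarrow$\ref{lem:bmon:inv:i}, carrying out each implication by an explicit graph computation; you instead establish the two bi-implications \ref{lem:bmon:inv:i}$\Leftrightarrow$\ref{lem:bmon:inv:iii} and \ref{lem:bmon:inv:iii}$\Leftrightarrow$\ref{lem:bmon:inv:ii} directly. Your route is marginally cleaner: the step \ref{lem:bmon:inv:i}$\Leftrightarrow$\ref{lem:bmon:inv:iii} is dispatched in one stroke by the relabelling $(x,u)\leftrightarrow(u,x)$ and symmetry of the inner product, whereas the paper's cycle hides this same observation inside the two separate computations \ref{lem:bmon:inv:i}$\Rightarrow$\ref{lem:bmon:inv:ii} and \ref{lem:bmon:inv:iii}$\Rightarrow$\ref{lem:bmon:inv:i}. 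The step \ref{lem:bmon:inv:iii}$\Leftrightarrow$\ref{lem:bmon:inv:ii} is identical in spirit to the paper's \ref{lem:bmon:inv:ii}$\Rightarrow$\ref{lem:bmon:inv:iii}. Neither approach involves anything beyond bookkeeping, so the difference is organizational rather than mathematical.
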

\begin{proof}
``\ref{lem:bmon:inv:i}$\RA$\ref{lem:bmon:inv:ii}":
Let $\{(x,u),(y,v) \}\subseteq X\times X$.
Then $\{(x,u),(y,v) \}\subseteq\gra (A^{-1}-\rho\Id)$
$\siff$ [$u\in A^{-1}x-\rho x $ and $v\in A^{-1}y-\rho y $]
$\siff$ $\{(x, u+\rho x),(y, v+\rho y)\}\subseteq \gra A^{-1}$
$\siff$ $\{( u+\rho x, x),(v+\rho y,y)\}\subseteq \gra A$
$\RA$ $\innp{x-y,u-v+\rho(x-y)}\ge \rho\normsq{x-y}$
$\siff$ $\rho\normsq{x-y}+\innp{x-y,u-v}\ge \rho\normsq{x-y}$
$\siff$ $\innp{u-v,x-y}\ge 0$.

``\ref{lem:bmon:inv:ii}$\RA$\ref{lem:bmon:inv:iii}":
Let $\{(x,u),(y,v) \}\subseteq\gra A^{-1}$.
Then $\{(x,u-\rho x),(y,v-\rho y) \}\subseteq\gra (A^{-1}-\rho \Id)$.
Hence $\innp{x-y, u-v-\rho(x-y)}\ge 0$;
equivalently $\innp{x-y, u-v}\ge\rho\normsq{x-y}$.

``\ref{lem:bmon:inv:iii}$\RA$\ref{lem:bmon:inv:i}":
Let $\{(x,u),(y,v) \}\subseteq X\times X$.
Then $\{(x,u),(y,v) \}\subseteq\gra A$
$\siff$ $\{(u,x),(v,y) \}\subseteq \gra A^{-1}$
$\RA $ $\innp{x-y,u-v}\ge \rho\normsq{u-v}$.
\end{proof}

\begin{lemma}
\label{lem:gra:conn}
Let $A\colon X\rras X$ and let $\rho\in \RR$.
Then the following hold:
\begin{enumerate}
\item
\label{eq:grA:grainv}
$\gra A=\menge{(u+\rho x,x)}{(x,u)\in \gra (A^{-1}-\rho \Id)}$.
\item
\label{eq:grainv:grA}
$\gra (A^{-1}-\rho \Id)=\menge{(u,x-\rho u)}{(x,u)\in \gra A}$.
\end{enumerate}
\end{lemma}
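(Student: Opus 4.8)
The plan is to prove part \ref{eq:grainv:grA} directly by unwinding the definitions of the set-valued inverse and of the shifted operator $A^{-1}-\rho\Id$, using exactly the kind of equivalence chain that appears in the proof of \cref{lem:bmon:inv}, and then to read off part \ref{eq:grA:grainv} from it by applying the affine bijection of $X\times X$ that inverts the coordinate change occurring in \ref{eq:grainv:grA}.

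For \ref{eq:grainv:grA}, I would fix $(p,q)\in X\times X$ and record the chain of equivalences $(p,q)\in\gra(A^{-1}-\rho\Id)$ $\siff$ $q\in A^{-1}p-\rho p$ $\siff$ $q+\rho p\in A^{-1}p$ $\siff$ $(p,q+\rho p)\in\gra A^{-1}$ $\siff$ $(q+\rho p,p)\in\gra A$. Setting $x=q+\rho p$ and $u=p$ rewrites the left-hand condition as ``$(x,u)\in\gra A$ together with $(p,q)=(u,x-\rho u)$'', which yields the inclusion ``$\subseteq$'' in \ref{eq:grainv:grA}; running the same equivalence backwards with $p=u$ and $q=x-\rho u$ for an arbitrary $(x,u)\in\gra A$ gives ``$\supseteq$''.

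For \ref{eq:grA:grainv}, I would introduce the maps $\Phi\colon X\times X\to X\times X\colon(x,u)\mapsto(u,x-\rho u)$ and $\Psi\colon X\times X\to X\times X\colon(p,q)\mapsto(q+\rho p,p)$, observe the one-line identities $\Psi\circ\Phi=\Id=\Phi\circ\Psi$ (so $\Phi$ and $\Psi$ are mutually inverse bijections), and then apply $\Psi$ to the set identity $\gra(A^{-1}-\rho\Id)=\Phi(\gra A)$ just established. This gives $\gra A=\Psi\big(\gra(A^{-1}-\rho\Id)\big)$, and since $\Psi(x,u)=(u+\rho x,x)$, spelling out $\Psi$ on a generic point $(x,u)\in\gra(A^{-1}-\rho\Id)$ is precisely \ref{eq:grA:grainv}.

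There is no genuine obstacle here: the entire argument is formal coordinate bookkeeping. The only thing to watch is the bookkeeping itself — the convention that $(u,x)\in\gra A^{-1}\siff(x,u)\in\gra A$ together with the fact that $-\rho\Id$ shifts the \emph{output} coordinate — so that it stays consistent throughout which slot in a pair plays the role of the ``$x$'' from $\gra A$ and which plays the role of the ``$u$''.
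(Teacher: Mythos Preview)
Your proposal is correct and uses essentially the same equivalence chain as the paper's proof; the paper establishes \ref{eq:grA:grainv} first by that chain and then remarks that \ref{eq:grainv:grA} is proved similarly, whereas you do \ref{eq:grainv:grA} first and then obtain \ref{eq:grA:grainv} by inverting the affine change of coordinates. That reordering and the explicit bijection $\Phi,\Psi$ are a cosmetic repackaging, not a different argument.
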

\begin{proof}
\ref{eq:grA:grainv}:
Let $(x,u)\in X\times X$.
Then $(x,u)\in \gra (A^{-1}-\rho \Id)$
$\siff$ $u\in A^{-1}x-\rho x$
$\siff$ $u+\rho x\in A^{-1} x$
$\siff$ $x\in A(u+\rho x)$
$\siff$ $(u+\rho x,x)\in \gra A$.
This proves ``$\supseteq$" in \ref{eq:grA:grainv}.
The opposite inclusion can be proved similarly.
\ref{eq:grainv:grA}:
The proof proceeds similar to that of
\ref{eq:grA:grainv}.
\end{proof}

\begin{lemma}
\label{lem:bmax:inv}
Let $A\colon X\rras X$ and let $\rho\in \RR$.
The following are equivalent:
\begin{enumerate}
\item
\label{lem:bmax:inv:i}
$A$ is \rhmaxmon.
\item
\label{lem:bmax:inv:ii}
$A^{-1}-\rho\Id$ is maximally monotone.
\end{enumerate}
\end{lemma}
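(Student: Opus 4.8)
The plan is to transfer maximality across the transformation $A\mapsto A^{-1}-\rho\Id$ by invoking \cref{lem:bmon:inv} and \cref{lem:gra:conn}. Write $B:=A^{-1}-\rho\Id$. First I would record the ``change of variables'' underlying \cref{lem:gra:conn}: the linear map $L\colon X\times X\to X\times X\colon(x,u)\mapsto(u,x-\rho u)$ is a bijection with inverse $(x,u)\mapsto(u+\rho x,x)$, and \cref{lem:gra:conn} says precisely that $\gra(C^{-1}-\rho\Id)=L(\gra C)$ and $\gra C=L^{-1}(\gra(C^{-1}-\rho\Id))$ for \emph{every} $C\colon X\rras X$. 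Consequently, for operators $C_1,C_2\colon X\rras X$ we have $\gra C_1\subseteq\gra C_2$ if and only if $\gra(C_1^{-1}-\rho\Id)\subseteq\gra(C_2^{-1}-\rho\Id)$; moreover $C\mapsto C^{-1}-\rho\Id$ is a bijection on the set of set-valued operators from $X$ to $X$ (with inverse $D\mapsto(D+\rho\Id)^{-1}$) which, by \cref{lem:bmon:inv}, carries the \rhmon\ operators onto the monotone operators.

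For ``\ref{lem:bmax:inv:i}$\RA$\ref{lem:bmax:inv:ii}'', assume $A$ is \rhmaxmon; then $B$ is monotone by \cref{lem:bmon:inv}. Given any monotone $D\colon X\rras X$ with $\gra B\subseteq\gra D$, set $C:=(D+\rho\Id)^{-1}$, so that $D=C^{-1}-\rho\Id$ and $C$ is \rhmon\ by \cref{lem:bmon:inv}. Applying $L^{-1}$ to $\gra B\subseteq\gra D$ and using \cref{lem:gra:conn}\ref{eq:grA:grainv} gives $\gra A\subseteq\gra C$; maximality of $A$ forces $\gra A=\gra C$, hence $\gra B=\gra D$ after applying $L$. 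Thus $B$ is maximally monotone.

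For the converse ``\ref{lem:bmax:inv:ii}$\RA$\ref{lem:bmax:inv:i}'', assume $B$ is maximally monotone; then $A$ is \rhmon\ by \cref{lem:bmon:inv}. Given any \rhmon\ $C\colon X\rras X$ with $\gra A\subseteq\gra C$, set $D:=C^{-1}-\rho\Id$, which is monotone by \cref{lem:bmon:inv}. Applying $L$ to $\gra A\subseteq\gra C$ and using \cref{lem:gra:conn}\ref{eq:grainv:grA} yields $\gra B\subseteq\gra D$; maximality of $B$ gives $\gra B=\gra D$, whence $\gra A=\gra C$ after applying $L^{-1}$. Thus $A$ is \rhmaxmon.

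No genuine difficulty arises beyond this bookkeeping: the single point to get right is that $C\mapsto C^{-1}-\rho\Id$ is a bijection that both preserves and reflects graph inclusion --- which is exactly the content of \cref{lem:gra:conn} via the isomorphism $L$ --- after which maximality transfers by pure formalism. An alternative phrasing would avoid naming $L$ and simply apply the explicit set-builder descriptions in \cref{lem:gra:conn} directly to the relevant inclusions.
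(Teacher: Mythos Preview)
Your argument is correct. The difference from the paper's proof lies in which equivalent formulation of maximality you exploit. You use the ``no proper extension'' version: you observe that the assignment $C\mapsto C^{-1}-\rho\Id$ is an order-isomorphism (for graph inclusion) between the poset of \rhmon\ operators and the poset of monotone operators, whence maximal elements correspond to maximal elements. The paper instead uses the pointwise characterization of maximality (the ``i.e.'' clause in the definition): it takes a single pair $(y,v)$ monotonically related to $\gra(A^{-1}-\rho\Id)$, rewrites that relation via \cref{lem:gra:conn} as $(v+\rho y,y)$ being $\rho$-comonotonically related to $\gra A$, and then invokes maximality of $A$ to place the point in the graph; the converse is symmetric. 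Your route is slightly more conceptual and makes the transfer of maximality a one-line corollary of the order-isomorphism, while the paper's route is more concrete and avoids naming the bijection $L$, at the cost of a short computation with inner products. Both rest on exactly the same two lemmas, so neither is materially shorter or more general.
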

\begin{proof}
Note that \cref{lem:bmon:inv}
implies that
$A$ is \rhmon\ $\siff $
$A^{-1}-\rho\Id$ is monotone.
``\ref{lem:bmax:inv:i}$\RA$\ref{lem:bmax:inv:ii}":
Let $(y,v)\in X\times X$.
Then $(y,v)$ is monotonically
related to $\gra(A^{-1}-\rho\Id)$
$\siff$ $(\forall (x,u)\in \gra (A^{-1}-\rho \Id))$
$\innp{x-y,u-v}\ge 0$
$\siff$ $(\forall (x,u)\in \gra (A^{-1}-\rho \Id))$
$\innp{x-y,u-v}+\rho\normsq{x-y}\ge \rho\normsq{x-y}$
$\siff$ $(\forall (x,u)\in \gra (A^{-1}-\rho \Id))$
$\innp{x-y,u+\rho x-(v+\rho y)}\ge\rho\normsq{x-y}$.
Because the last inequality holds
for all $(x,u)\in \gra (A^{-1}-\rho \Id)$,
the
parametrization of $\gra A$ given in
\cref{lem:gra:conn}\ref{eq:grA:grainv} and
 the \emph{maximal} $\rho$-comonotonicity of $A$
 imply
 that
 $(v+\rho y, y) \in \gra A$.
Therefore, by \cref{lem:gra:conn}\ref{eq:grainv:grA},
 $(y,v)\in \gra (A^{-1}-\rho \Id)$.

 ``\ref{lem:bmax:inv:ii}$\RA$\ref{lem:bmax:inv:i}":
Let $(y,v)\in X\times X$.
Then $(y,v)$ is $\rho$-comonotonically related to $\gra A$
$\siff$ $(\forall (x,u)\in \gra A)$
$\innp{x-y,u-v}\ge \rho \normsq{u-v}$
$\siff$ $(\forall (x,u)\in \gra A)$
$\innp{x-\rho u-(y-\rho v),u-v}\ge 0$.
It follows from \cref{lem:gra:conn}\ref{eq:grainv:grA}
and 
 the \emph{maximal} monotonicity of $A^{-1}-\rho\Id$
that
$(v, y-\rho v)\in \gra (A^{-1}-\rho \Id)$,
equivalently, using \cref{lem:gra:conn}\ref{eq:grA:grainv},
$(y,v)\in \gra A$.
\end{proof}

\begin{rem}
Note that when $\rho<0$,
the (maximal) monotonicity of
$A^{-1}-\rho\Id$
is equivalent to
the (maximal) monotonicity of
the Yosida approximation
$(A^{-1}-\rho\Id)^{-1}$.
Such a characterization
is presented in \cite[Proposition~6.9.3]{BurIus}.
\end{rem}

\begin{prop}
\label{prop:surject}
Let $A\colon X\rras X$ be \bhmaxmon\ where $\rho>-1$.
Then
$\ran(\Id+A^{-1})=X$.

\end{prop}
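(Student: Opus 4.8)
The plan is to transfer the problem to the maximal monotonicity of $A^{-1}-\rho\Id$ and then invoke Minty's theorem. First I would apply \cref{lem:bmax:inv}: the hypothesis that $A$ is \bhmaxmon\ is \emph{equivalent} to $B:=A^{-1}-\rho\Id$ being maximally monotone. So from now on we may work with the maximally monotone operator $B$, and we have $A^{-1}=B+\rho\Id$.

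Next I would rewrite the operator of interest in terms of $B$. Since $\rho>-1$, i.e. $1+\rho>0$, we may factor:
\begin{equation*}
\Id+A^{-1}=(1+\rho)\Id+B=(1+\rho)\Big(\Id+\tfrac{1}{1+\rho}B\Big).
\end{equation*}
Now $\tfrac{1}{1+\rho}B$ is a strictly positive scalar multiple of a maximally monotone operator, hence itself maximally monotone; by Minty's theorem (\cref{thm:minty}, specifically \cref{eq:Minty:2}) we get $\ran\big(\Id+\tfrac{1}{1+\rho}B\big)=X$. Finally, multiplication by the nonzero scalar $1+\rho$ is a bijection of $X$ onto itself, so
\begin{equation*}
\ran(\Id+A^{-1})=(1+\rho)\,\ran\Big(\Id+\tfrac{1}{1+\rho}B\Big)=(1+\rho)X=X,
\end{equation*}
which is the assertion.

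As for difficulty: there is essentially no obstacle here beyond bookkeeping. The only facts invoked, besides \cref{lem:bmax:inv} and Minty's theorem, are the standard observations that a strictly positive scalar multiple of a maximally monotone operator is maximally monotone and that $\ran(\lambda C)=X\iff\ran C=X$ for $\lambda\neq 0$. The hypothesis $\rho>-1$ is used in exactly one place, namely to guarantee $1+\rho>0$ so that the factorization and the scaling step are legitimate; at $\rho=-1$ the operator $\Id+A^{-1}$ degenerates to $B=A^{-1}+\Id$... in fact to $A^{-1}-(-1)\Id = A^{-1}+\Id$, which need not be surjective, so the restriction on $\rho$ is sharp.
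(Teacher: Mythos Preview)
Your proof is correct and follows the paper's own argument essentially verbatim: invoke \cref{lem:bmax:inv} to get that $B=A^{-1}-\rho\Id$ is maximally monotone, factor $\Id+A^{-1}=(1+\rho)\bigl(\Id+\tfrac{1}{1+\rho}B\bigr)$, and apply Minty's theorem to the positive scalar multiple $\tfrac{1}{1+\rho}B$. The closing remark about sharpness at $\rho=-1$ is extra commentary not in the paper, but it is harmless.
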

\begin{proof}
By \cref{lem:bmax:inv}, $A^{-1}-\rho \Id$
 is maximally monotone.
 Consequently, because $1+\rho>0$,
 the operator $\tfrac{1}{1+\rho}(A^{-1}-\rho\Id)$
 is maximally monotone.
Applying \cref{eq:Minty:2} to
$\tfrac{1}{1+\rho}(A^{-1}-\rho\Id)$
we have $\ran (\Id+A^{-1})=\ran ((1+\rho)\Id+(A^{-1}-\rho \Id))
=(1+\rho)\ran (\Id+\tfrac{1}{1+\rho}(A^{-1}-\rho\Id))=(1+\rho)X=X$.
\end{proof}

\begin{prop}
\label{prop:gen:min}
Let $A\colon X\rras X$. 
Then the following hold:
\begin{enumerate}
\item
\label{prop:gen:min:i}
$J_{A^{-1}}=\Id-J_A.$
\item
\label{prop:gen:min:ii}
$\ran(\Id+A^{-1})=\dom(\Id-J_A)=\ran (\Id+A)$.
\end{enumerate}
\end{prop}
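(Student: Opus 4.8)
The plan is to prove \ref{prop:gen:min:i} first, by a direct manipulation of graphs, and then to read off \ref{prop:gen:min:ii} as an essentially immediate corollary, using only the trivial fact that adjoining the everywhere-defined operator $\Id$ does not alter a domain.

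For \ref{prop:gen:min:i}, I would fix $x\in X$ and chase the definition $J_A=(\Id+A)^{-1}$ through the graph of $A$: for every $p\in X$,
\[
p\in J_A x \siff x-p\in Ap \siff p\in A^{-1}(x-p).
\]
Introducing the change of variable $q:=x-p$ (so that $p=x-q$), the rightmost condition reads $x-q\in A^{-1}q$, i.e.\ $q\in J_{A^{-1}}x$. Hence $p\mapsto x-p$ carries $J_A x$ onto $J_{A^{-1}}x$ and back, which says precisely that $J_{A^{-1}}x=x-J_A x=(\Id-J_A)x$; since $x$ is arbitrary, this is the claimed operator identity. (This is the same substitution that underlies the Minty parametrization recalled in \cref{thm:minty}.)

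For \ref{prop:gen:min:ii}, observe on the one hand that $\dom(\Id-J_A)=\dom J_A$ because $\Id$ has full domain, and that $\dom J_A=\dom (\Id+A)^{-1}=\ran(\Id+A)$. On the other hand, \ref{prop:gen:min:i} gives $\Id-J_A=J_{A^{-1}}$, so $\dom(\Id-J_A)=\dom J_{A^{-1}}=\dom(\Id+A^{-1})^{-1}=\ran(\Id+A^{-1})$. Combining the two computations yields the triple equality.

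I do not expect a genuine obstacle here: the only point requiring a little care is the set-valued bookkeeping in \ref{prop:gen:min:i} — treating $J_A x$ as a possibly empty or multivalued subset of $X$ and verifying that the displayed equivalences hold for each individual $p$. As an alternative route, the equality $\ran(\Id+A)=\ran(\Id+A^{-1})$ in \ref{prop:gen:min:ii} can also be obtained without invoking \ref{prop:gen:min:i}, via the symmetric remark that $x\in\ran(\Id+A)$ iff $x=p+q$ for some $p,q$ with $q\in Ap$, equivalently $p\in A^{-1}q$ — a condition manifestly invariant under interchanging $A$ and $A^{-1}$.
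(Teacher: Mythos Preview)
Your proof is correct and matches the paper's argument in structure. For \ref{prop:gen:min:i} the paper simply cites \cite[Proposition~23.7(ii)]{BC2017} rather than writing out the graph manipulation you give, but your argument is exactly the standard one underlying that reference; for \ref{prop:gen:min:ii} your chain of equalities is essentially identical to the paper's.
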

\begin{proof}
\ref{prop:gen:min:i}:
This follows from \cite[Proposition~23.7(ii)~and~Definition~23.1]{BC2017}.
\ref{prop:gen:min:ii}:
Using \ref{prop:gen:min:i}, we have
$\ran(\Id+A^{-1})
=\dom (\Id+A^{-1})^{-1}
=\dom J_{A^{-1}}
=\dom (\Id-J_A)
=(\dom \Id)\cap (\dom J_A)
=\dom J_A
=\ran (\Id+A)$.
\end{proof}

\begin{corollary}[\bf{surjectivity of $\Id+A$ and $\Id+A^{-1}$}]
\label{cor:surj}
Let $A\colon X\rras X$ be \bhmaxmon\ where $\rho>-1$.
Then
\begin{equation}
\dom J_A=\ran(\Id+A)=X,
\end{equation}
and
\begin{equation}
  \dom (\Id-J_A)=\ran(\Id+A^{-1})=X.
  \end{equation}
\end{corollary}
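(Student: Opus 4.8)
The plan is to assemble the corollary directly from the two preceding propositions, since all the analytic content has already been extracted. First I would record the trivial identity $\dom J_A = \ran(\Id + A)$, which holds for any operator $A\colon X\rras X$: by definition $J_A = (\Id + A)^{-1}$, so $\dom J_A = \dom (\Id+A)^{-1} = \ran(\Id+A)$. This already disposes of the first displayed equality up to identifying the common value with $X$.

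Next I would invoke \cref{prop:gen:min}\ref{prop:gen:min:ii}, which gives the chain of equalities
\[
\ran(\Id + A^{-1}) = \dom(\Id - J_A) = \ran(\Id + A).
\]
Combining this with the observation of the previous paragraph, all four sets $\dom J_A$, $\ran(\Id+A)$, $\dom(\Id - J_A)$, and $\ran(\Id+A^{-1})$ coincide. It remains to show this common set is all of $X$.

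For that I would apply \cref{prop:surject}: the hypothesis that $A$ is \bhmaxmon\ with $\rho > -1$ is exactly what is needed there, and it yields $\ran(\Id + A^{-1}) = X$. Feeding this into the chain of equalities above gives $\dom J_A = \ran(\Id+A) = X$ and $\dom(\Id - J_A) = \ran(\Id+A^{-1}) = X$, which are the two asserted displays.

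I do not expect any genuine obstacle here; the corollary is a bookkeeping consequence of \cref{prop:surject} and \cref{prop:gen:min}. The only point requiring a moment's care is making sure the hypotheses of \cref{prop:surject} — maximal $\rho$-comonotonicity together with $\rho > -1$ — are precisely those assumed in the corollary, so that \cref{prop:surject} may be quoted verbatim; this is immediate from the statement. One could also phrase the argument symmetrically by running \cref{prop:gen:min}\ref{prop:gen:min:i} ($J_{A^{-1}} = \Id - J_A$) to transfer surjectivity back and forth between $\Id + A$ and $\Id + A^{-1}$, but the route through \cref{prop:gen:min}\ref{prop:gen:min:ii} is the shortest.
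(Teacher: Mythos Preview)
Your proposal is correct and matches the paper's own proof, which simply reads ``Combine \cref{prop:surject} and \cref{prop:gen:min}\ref{prop:gen:min:i}\&\ref{prop:gen:min:ii}.'' You have spelled out exactly that combination, with the additional (harmless) observation that $\dom J_A = \ran(\Id+A)$ holds by definition.
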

\begin{proof}
Combine
\cref{prop:surject}
 and
\cref{prop:gen:min}\ref{prop:gen:min:i}\&\ref{prop:gen:min:ii}.
\end{proof}

\begin{prop}[\bf{single-valuedness of the resolvent}]
\label{prop:s:v}
Let $A\colon X\rras X$ be \bhmon\ where $\rho>-1$.
Then $J_A=(\Id+A)^{-1}$ and
$J_{A^{-1}}=\Id-J_A$ are at most single-valued.
\end{prop}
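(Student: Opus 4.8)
The plan is to prove single-valuedness of $J_A$ directly from the defining inequality of $\rho$-comonotonicity, and then to deduce the statement for $J_{A^{-1}}$ by appealing to \cref{prop:gen:min}.

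First I would fix $x\in X$ and suppose $\{p,q\}\subseteq J_Ax=(\Id+A)^{-1}x$. Unravelling the definition of the resolvent, this says precisely that $(p,x-p)\in\gra A$ and $(q,x-q)\in\gra A$. Applying the $\rho$-comonotonicity inequality \eqref{eq:def:cohmon} to this pair of points in $\gra A$ yields
\[
\innp{p-q,(x-p)-(x-q)}\ge\rho\normsq{(x-p)-(x-q)},
\]
i.e., $\innp{p-q,q-p}\ge\rho\normsq{p-q}$, which rearranges to $0\ge(1+\rho)\normsq{p-q}$. Since $\rho>-1$, we have $1+\rho>0$, hence $\normsq{p-q}\le 0$ and therefore $p=q$. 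This shows $J_A$ is at most single-valued.

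For $J_{A^{-1}}$, I would simply invoke \cref{prop:gen:min}\ref{prop:gen:min:i}, which gives $J_{A^{-1}}=\Id-J_A$; since $J_A$ is at most single-valued, so is $\Id-J_A$. One should note that the first argument cannot be rerun verbatim on $A^{-1}$: by \cref{lem:bmon:inv}, $A^{-1}$ is only $\rho$-\emph{monotone}, not $\rho$-comonotone, so the detour through $\Id-J_A$ is what makes this clean.

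There is essentially no real obstacle here — the entire proof is the one-line computation displayed above — and the only point that matters is the strict sign condition $1+\rho>0$ coming from the hypothesis $\rho>-1$. It is worth recording that the conclusion genuinely fails at the endpoint $\rho=-1$: there the displayed inequality degenerates to $0\ge 0$, imposing no constraint on $p-q$, in line with the example $A=-\Id$ (which is $(-1)$-comonotone, yet $\Id+A=0$, so $J_A(0)=X$).
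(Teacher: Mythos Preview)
Your proof is correct and follows essentially the same route as the paper: pick two candidate values $p,q\in J_Ax$, rewrite this as $(p,x-p),(q,x-q)\in\gra A$, apply the $\rho$-comonotonicity inequality to get $-(1+\rho)\normsq{p-q}\ge 0$, and conclude $p=q$ from $\rho>-1$; then handle $J_{A^{-1}}$ via \cref{prop:gen:min}\ref{prop:gen:min:i}. Your added remarks on why the argument cannot be applied directly to $A^{-1}$ and on the failure at $\rho=-1$ are correct and worth keeping.
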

\begin{proof}
Let $x\in \dom J_A=\ran(\Id+A)$ and
let $(u,v)\in X\times X$.
Then $\{u,v\}\subseteq J_A x$
$\siff$ [$x-u\in Au$ and $x-v\in Av$]
$\RA$ $\innp{(x-u)-(x-v),u-v}\ge \rho\normsq{u-v}$
$\siff$ $-\normsq{u-v}\ge \rho\normsq{u-v}$.
Since $\rho>-1$, the last inequality
implies that $u=v$.
Now combine with \cref{prop:gen:min}\ref{prop:gen:min:i}.
\end{proof}

\begin{corollary}[{See also \cite[Proposition~3.4]{PhanDao18}}]
  Let $A\colon X\rras X$ be \bhmaxmon\ where $\rho>-1$.
  Then $J_A=(\Id+A)^{-1}$ and
  $J_{A^{-1}}=\Id-J_A$ are single-valued
  and $\dom J_A=\dom J_{A^{-1}}=X$.
\end{corollary}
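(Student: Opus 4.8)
The plan is to simply combine the two preceding results, since the corollary is a packaging of \cref{prop:s:v} and \cref{cor:surj} under the stronger hypothesis of \emph{maximal} $\rho$-comonotonicity. First I would observe that a \bhmaxmon\ operator is in particular \bhmon, so \cref{prop:s:v} applies verbatim: $J_A=(\Id+A)^{-1}$ and $J_{A^{-1}}=\Id-J_A$ are at most single-valued. (Here the hypothesis $\rho>-1$ is exactly what makes the inequality $-\normsq{u-v}\ge\rho\normsq{u-v}$ force $u=v$, as in the proof of \cref{prop:s:v}.)

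Next I would invoke \cref{cor:surj}, whose hypotheses are precisely those of the present statement, to get $\dom J_A=\ran(\Id+A)=X$ and $\dom(\Id-J_A)=\ran(\Id+A^{-1})=X$. Since $J_{A^{-1}}=\Id-J_A$ by \cref{prop:gen:min}\ref{prop:gen:min:i}, this is the same as $\dom J_{A^{-1}}=X$.

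Finally I would conclude: an at-most-single-valued operator with full domain is single-valued with full domain, which gives both assertions for $J_A$ and for $J_{A^{-1}}$. There is essentially no obstacle here; the only point worth stating explicitly is that maximal $\rho$-comonotonicity is used in two distinct ways — through \cref{lem:bmax:inv} (hence \cref{prop:surject}, \cref{cor:surj}) to obtain surjectivity of $\Id+A$ and $\Id+A^{-1}$, and through the underlying $\rho$-comonotonicity to obtain single-valuedness — and that both uses require only $\rho>-1$. One could alternatively cite \cite[Proposition~3.4]{PhanDao18} for the comparison already flagged in the statement, but the self-contained route above is shorter.
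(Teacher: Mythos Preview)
Your proposal is correct and is exactly the intended argument: the corollary is simply the combination of \cref{prop:s:v} (at-most-single-valuedness under $\rho$-comonotonicity with $\rho>-1$) and \cref{cor:surj} (full domain under maximal $\rho$-comonotonicity with $\rho>-1$), together with \cref{prop:gen:min}\ref{prop:gen:min:i}. The paper itself gives no separate proof, treating it as an immediate consequence of these preceding results.
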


In \cref{ex:sv:fr:fail} below, we illustrate that
the assumption that $\rho >-1$
is critical in the conclusion of
\cref{cor:surj}
and
\cref{prop:s:v}.

\begin{example}
\label{ex:sv:fr:fail}
Suppose that $X\neq \{0\}$.
Let $C$ be a nonempty closed convex subset
of $X$, let $r\in \RR_+$, set $B=-\Id-rP_C$,
set $A=B^{-1}$ and set $\rho=-(1+r)\le -1$.
Then the following hold:
\begin{enumerate}
\item
\label{ex:sv:fr:fail:i}
$B-\rho \Id $ is maximally monotone.
\item
\label{ex:sv:fr:fail:ii}
$A$ is \rhmaxmon.
\item
\label{ex:sv:fr:fail:iii}
$\ran (\Id+A)=\ran(\Id+A^{-1})=(\rho+1)C=-rC$.
\item
\label{ex:sv:fr:fail:iii:b}
$\Id+A$ is surjective $\siff$
$[C=X \text{~and~}r>0]$.
\item
\label{ex:sv:fr:fail:iv}
$J_A$ is at most single-valued $\siff$
$J_{A^{-1}}$ is at most single-valued $\siff$
$[C=X \text{~and~}r>0]$.
\end{enumerate}
\end{example}

\begin{proof}
\ref{ex:sv:fr:fail:i}:
Indeed, $B-\rho\Id=-\Id-rP_C+(1+r)\Id=r(\Id-P_C)$.
It follows from
\cite[Example~23.4~\&~Proposition~23.11(i)]{BC2017}
that $\Id-P_C$ is maximally monotone.
Because $r\ge 0$,
the operator $B-\rho\Id=r(\Id-P_C)$ is maximally monotone as well.

\ref{ex:sv:fr:fail:ii}:
Combine \ref{ex:sv:fr:fail:i} and \cref{lem:bmax:inv}.

\ref{ex:sv:fr:fail:iii}:
The first identity is \cref{prop:gen:min}\ref{prop:gen:min:ii}.
Now $\ran (\Id+A^{-1})=\ran(\Id+B)=\ran(-rP_C)
=-r\ran P_C=-rC=(\rho+1)C$.

\ref{ex:sv:fr:fail:iii:b}:
This is a direct consequence of \ref{ex:sv:fr:fail:iii}.

\ref{ex:sv:fr:fail:iv}:
The first equivalence follows from
\cref{prop:gen:min}\ref{prop:gen:min:i}.
Note that $[r=0 \text{~or~} C=\{0\}]$
$\siff rC=\{0\}$
$\siff rP_C\equiv 0$
$\siff$ $B=-\Id$
$\siff$ $\gra J_{A^{-1}}=\gra J_B=\{0\}\times X$.
Now suppose that $r> 0$. Then
 $J_{A^{-1}}=J_B=(\Id+B)^{-1}=(-rP_C)^{-1}
=(\Id+N_C)\circ(-r^{-1}\Id)$ which is
at most single-valued
$\siff $ $C=X$, by e.g., \cite[Theorem~7.4]{BC2017}.
\end{proof}

\begin{prop}
\label{prop:surj:ow}
Let $A\colon X\To X$ be \bhmon,
where $\rho>-1$, and
such that $\ran(\Id+A)=X$.
Then $A$ is \bhmaxmon.
\end{prop}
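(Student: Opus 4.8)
The plan is to transfer the whole question to classical monotonicity through the correspondences developed in \cref{lem:bmon:inv} and \cref{lem:bmax:inv}, and then to apply Minty's theorem. Since $A$ is \bhmon, \cref{lem:bmon:inv} gives that $A^{-1}-\rho\Id$ is monotone, and, because $\rho>-1$, the rescaled operator $M:=\tfrac{1}{1+\rho}(A^{-1}-\rho\Id)$ is monotone as well. The goal is now to show that $M$ is \emph{maximally} monotone, for then $A^{-1}-\rho\Id=(1+\rho)M$ is maximally monotone (a strictly positive multiple of a maximally monotone operator is maximally monotone, exactly as used in the proof of \cref{prop:surject}), and \cref{lem:bmax:inv} immediately yields that $A$ is \bhmaxmon.

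To see that $M$ is maximally monotone, I would compute $\ran(\Id+M)$ and invoke \cref{eq:Minty:2}. Using the identity $(1+\rho)\Id+(A^{-1}-\rho\Id)=\Id+A^{-1}$ together with the fact that multiplication by the scalar $\tfrac{1}{1+\rho}>0$ merely rescales ranges,
\begin{equation*}
\ran(\Id+M)=\ran\!\Big(\tfrac{1}{1+\rho}\big(\Id+A^{-1}\big)\Big)=\tfrac{1}{1+\rho}\,\ran(\Id+A^{-1}).
\end{equation*}
By \cref{prop:gen:min}\ref{prop:gen:min:ii}, $\ran(\Id+A^{-1})=\ran(\Id+A)$, which equals $X$ by hypothesis; hence $\ran(\Id+M)=X$. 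Since $M$ is monotone, \cref{thm:minty} (specifically \cref{eq:Minty:2}) shows that $M$ is maximally monotone, and then the chain of implications described above delivers the conclusion.

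There is no genuine obstacle here — the argument is essentially bookkeeping with ranges — but one must be slightly careful that the surjectivity hypothesis is imposed on $\Id+A$, whereas Minty's theorem has to be applied to the monotone operator $M$ built from $A^{-1}$. This is precisely why \cref{prop:gen:min}\ref{prop:gen:min:ii} (to pass from $\ran(\Id+A)$ to $\ran(\Id+A^{-1})$) and the scaling identity $(1+\rho)\Id+(A^{-1}-\rho\Id)=\Id+A^{-1}$ are both needed, and it is also the only place where the hypothesis $\rho>-1$ enters, namely through $1+\rho>0$.
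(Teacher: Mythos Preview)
Your proof is correct but proceeds differently from the paper. The paper argues directly from the definition: given $(x,u)$ that is $\rho$-comonotonically related to $\gra A$, it uses surjectivity of $\Id+A$ to produce $(y,v)\in\gra A$ with $x+u=y+v$; substituting $x-y=-(u-v)$ into the comonotonicity inequality yields $-\normsq{u-v}\ge\rho\normsq{u-v}$, and since $\rho>-1$ this forces $u=v$, hence $x=y$ and $(x,u)\in\gra A$. Your argument instead transfers the problem to classical monotonicity via \cref{lem:bmon:inv} and \cref{lem:bmax:inv}, computes $\ran(\Id+M)$ for the auxiliary monotone operator $M=\tfrac{1}{1+\rho}(A^{-1}-\rho\Id)$, and invokes the converse direction of Minty's theorem. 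The paper's route is shorter and self-contained, needing none of the earlier structural lemmas; your route, while longer, has the virtue of making the result the exact mirror image of \cref{prop:surject} (whose proof runs the same range computation in the opposite direction) and of showing explicitly that the $\rho$-comonotone Minty-type theory is a reparametrization of the classical one.
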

\begin{proof}
Let $(x,u)\in X\times X$
such that
$(\forall (y,v)\in \gra A)$
\begin{equation}
\label{eq:min:od}
\innp{x-y,u-v}\ge \rho\normsq{u-v}.
\end{equation}
It follows from the surjectivity of
$\Id+A$ that there exists $(y,v)\in X\times X$
such that $v\in Ay$ and $x+u=y+v\in (\Id +A)y$.
Consequently, \cref{eq:min:od} implies that
$\rho\normsq{u-v}\le \innp{x-y,u-v}
=\innp{-(u-v),u-v}=-\normsq{u-v}$.
Hence, because $\rho> -1$,
we have $u=v$ and thus $x=y$
which proves the maximality of $A$.
\end{proof}
\begin{thm}[{\bf Minty parametrization}]
\label{thm:Minty:type}
Let $A\colon X\rras X$ be \bhmon\ where $\rho>-1$. Then
\begin{equation}
\gra A=\menge{(J_A x, (\Id-J_A)x)}{x\in \ran (\Id+A)}.
\end{equation}
Moreover, $A$ is \bhmaxmon\ $\siff$ $\ran (\Id+A)=X$,
in which case
\begin{equation}
\gra A=\menge{(J_A x, (\Id-J_A)x)}{x\in X}.
\end{equation}
\end{thm}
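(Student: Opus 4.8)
The plan is to treat this exactly as a $\rho$-comonotone analogue of Minty's theorem (\cref{thm:minty}), assembling the pieces already proved. The crucial preliminary fact is \cref{prop:s:v}: since $A$ is \bhmon\ with $\rho>-1$, the resolvent $J_A$ is at most single-valued, so the set on the right-hand side of the first displayed equation is well defined as the graph of a (partial) function. I would then prove the identity $\gra A=\menge{(J_Ax,(\Id-J_A)x)}{x\in\ran(\Id+A)}$ by two inclusions. For ``$\supseteq$'', given $x\in\ran(\Id+A)=\dom J_A$, the point $J_Ax$ satisfies $x-J_Ax\in A(J_Ax)$ by the very definition of the resolvent, which is precisely the statement that $(J_Ax,(\Id-J_A)x)\in\gra A$. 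For ``$\subseteq$'', given $(a,u)\in\gra A$, I would set $x:=a+u$, observe $x\in(\Id+A)a$ so that $x\in\ran(\Id+A)$ and $a\in J_Ax$, and then use single-valuedness to conclude $J_Ax=a$ and hence $(\Id-J_A)x=x-a=u$, so $(a,u)$ has the required form.

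For the ``moreover'' part, the equivalence ``$A$ is \bhmaxmon\ $\siff$ $\ran(\Id+A)=X$'' splits into two implications that are already available: the forward direction is \cref{cor:surj}, which gives $\ran(\Id+A)=X$ whenever $A$ is \bhmaxmon\ with $\rho>-1$; the reverse direction is \cref{prop:surj:ow}, which says that a \bhmon\ operator with $\rho>-1$ and $\ran(\Id+A)=X$ is automatically \bhmaxmon. Finally, when either (equivalently, both) of these conditions holds, one simply substitutes $\ran(\Id+A)=X$ into the parametrization proved in the first step to obtain $\gra A=\menge{(J_Ax,(\Id-J_A)x)}{x\in X}$.

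I do not expect any real obstacle: every hard ingredient --- single-valuedness of $J_A$, surjectivity of $\Id+A$ in the maximal case, and the converse maximality criterion --- has been isolated in \cref{prop:s:v}, \cref{cor:surj}, and \cref{prop:surj:ow}. The only point demanding a little care is the ``$\subseteq$'' inclusion in the first step, where the (possibly) set-valued nature of $A$ means one must \emph{derive} $J_Ax=a$ from single-valuedness rather than take it for granted, and must first check that $x=a+u$ genuinely lies in $\ran(\Id+A)$ before speaking of $J_Ax$. With that bookkeeping in place the proof is a short, direct verification.
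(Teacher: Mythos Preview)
Your proposal is correct and follows essentially the same approach as the paper's proof. The paper packages the graph identity as a single chain of equivalences $(x,u)\in\gra A\siff\cdots\siff[z:=x+u\in\ran(\Id+A),\ x=J_Az,\ u=(\Id-J_A)z]$ rather than as two inclusions, but the underlying ingredients are identical: \cref{prop:s:v} for single-valuedness of $J_A$, and the combination of \cref{cor:surj} and \cref{prop:surj:ow} for the maximality equivalence.
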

\begin{proof}
Let $(x,u)\in X\times X$.
In view of \cref{prop:s:v}
we have $(x,u)\in \gra A$$\siff u\in Ax$
$\siff x+u\in x+Ax=(\Id +A)x$
$\siff x=J_A(x+u)$
$\siff$ [$z:=x+u\in \ran (\Id +A)$,
 $x=J_A z$ and $u=x+u-x=x+u-J_A(x+u)=(\Id-J_A)z$].
The equivalence of maximal $\rho$-comonotonicity of $A$
and the surjectivity of $\Id +A$ follows from
combining \cref{cor:surj} and \cref{prop:surj:ow}.
\end{proof}

\begin{corollary}
\label{lem:gr:RA}
Suppose that $A\colon X\rras X$
is maximally $\rho$-comonotone
where $\rho>-1$
 and let $(x,u)\in X\times X$.
Then the following hold:
\begin{enumerate}
\item
\label{lem:gr:RA:i}
$
(x,u)\in \gra J_A\siff (u, x-u) \in \gra A
$.
\item
\label{lem:gr:RA:ii}
$
(x,u)\in \gra R_A\siff \bigl(\tfrac{1}{2}(x+u),
\tfrac{1}{2}(x-u)
\bigr) \in \gra A  $.
\end{enumerate}
\end{corollary}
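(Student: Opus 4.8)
The plan is to obtain both equivalences as immediate consequences of the Minty parametrization (\cref{thm:Minty:type}) together with \cref{cor:surj}, which guarantees -- since $A$ is maximally $\rho$-comonotone with $\rho>-1$ -- that $\ran(\Id+A)=X$ and hence that $J_A$ and $\Id-J_A=J_{A^{-1}}$ are genuine single-valued operators with full domain. This last point is the only thing one must keep in mind, and it is precisely what makes the symbols $\gra J_A$ and $\gra R_A$ refer to graphs of honest mappings; beyond that, the argument is a short unwinding of definitions, so there is no real obstacle.

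For \ref{lem:gr:RA:i} I would argue as follows. Using $\ran(\Id+A)=X$, the ``moreover'' clause of \cref{thm:Minty:type} gives $\gra A=\menge{(J_A z,(\Id-J_A)z)}{z\in X}$. Hence $(u,x-u)\in\gra A$ holds iff there is $z\in X$ with $u=J_A z$ and $x-u=(\Id-J_A)z=z-J_A z=z-u$; the second equation forces $z=x$, and substituting into the first yields $u=J_A x$, i.e.\ $(x,u)\in\gra J_A$. The converse direction is the same computation read backwards: if $u=J_A x$ then $(u,x-u)=(J_A x,(\Id-J_A)x)\in\gra A$. (Equivalently, and even more directly, $u=J_A x\siff x\in u+Au\siff x-u\in Au\siff (u,x-u)\in\gra A$, the single-valuedness from \cref{prop:s:v} ensuring the left-hand side is unambiguous.)

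For \ref{lem:gr:RA:ii} I would simply feed part \ref{lem:gr:RA:i} through the definition $R_A=2J_A-\Id$. Indeed $(x,u)\in\gra R_A$ iff $u=2J_A x-x$ iff $J_A x=\tfrac12(x+u)$ iff $(x,\tfrac12(x+u))\in\gra J_A$. Applying \ref{lem:gr:RA:i} with $u$ replaced by $\tfrac12(x+u)$, this is equivalent to $\bigl(\tfrac12(x+u),\,x-\tfrac12(x+u)\bigr)\in\gra A$, and since $x-\tfrac12(x+u)=\tfrac12(x-u)$ this is exactly $\bigl(\tfrac12(x+u),\tfrac12(x-u)\bigr)\in\gra A$, as claimed.
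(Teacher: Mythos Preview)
Your proposal is correct and follows essentially the same approach as the paper: both arguments invoke \cref{thm:Minty:type} (together with the single-valuedness and full domain coming from \cref{prop:s:v}/\cref{cor:surj}) and then unwind the definitions of $J_A$ and $R_A=2J_A-\Id$. The only cosmetic difference is that you route \ref{lem:gr:RA:ii} through \ref{lem:gr:RA:i}, whereas the paper applies the Minty parametrization directly in \ref{lem:gr:RA:ii}; these are the same computation.
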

\begin{proof}
Let $(x,u)\in X\times X$
 and note that in view of \cref{prop:s:v}
 and \cref{thm:Minty:type}
 $J_A\colon X\to X$ and consequently $R_A\colon X\to X$
 are single-valued.

\ref{lem:gr:RA:i}:
We have $(x,u)\in \gra J_A$
$\siff$ $u=J_Ax$
$\siff$ $x-u=(\Id-J_A)x$.
Now use \cref{thm:Minty:type}.

\ref{lem:gr:RA:ii}:
We have $(x,u)
\in\gra R_A\siff u=R_A x=2J_Ax-x$
$\siff x+u=2J_A x$
$\siff J_A x=\tfrac{1}{2}(x+u)$
$\siff x-J_Ax=x-\tfrac{1}{2}(x+u)=\tfrac{1}{2}(x-u)$
$\siff (\tfrac{1}{2}(x+u), \tfrac{1}{2}(x-u)) \in \gra A$,
where the last equivalence follows from
\cref{thm:Minty:type}.
\end{proof}

\section{$\rho$-comonotonicity and averagedness}
\label{sec:aver}
We start this section
with the following definition.
\begin{definition}
\label{def:con:nonexp}
  Let $T\colon X\to X$
  and let $\alpha\in \left ]0,+\infty\right [$.
  Then $T $ is $\alpha$-\emph{\conic} if there
  exists a nonexpansive operator $N\colon X\to X$
  such that $T=(1-\alpha)\Id+\alpha N$.
\end{definition}

\begin{remark}
\label{rem:con:nonexp}
In view of \cref{def:con:nonexp},
it is clear that
$T$ is $\alpha$-averaged if and only if
[$T$ $\alpha$-\conic\ and $\alpha\in \left ]0,1\right [$].
Similarly,
$T$ is nonexpansive if and only if
$T$ $1$-\conic.
\end{remark}

The proofs of the next two results
are straightforward and hence omitted.

\begin{lemma}
  \label{lem:coco:conc}
  Let $T\colon X\to X$
  and let $\alpha\in \left ]0,+\infty\right [$.
  Then
  \begin{equation}
 \text{ $T $ is $\alpha$-\conic\
  $\siff$
  $\Id-T$ is $\tfrac{1}{2\alpha}$-cocoercive}.
  \end{equation}
\end{lemma}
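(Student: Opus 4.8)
The plan is to collapse the whole statement to a single cosine-rule expansion. First I would set $S := \Id - T$ and, for the given $\alpha\in\left]0,+\infty\right[$, introduce $N := \Id - \tfrac{1}{\alpha}S = \Id - \tfrac{1}{\alpha}(\Id-T)$. Solving the defining identity $T = (1-\alpha)\Id + \alpha N$ for $N$ shows that this $N$ is the \emph{only} operator that can witness $\alpha$-conic nonexpansiveness; hence, by \cref{def:con:nonexp}, $T$ is $\alpha$-\conic\ if and only if this particular $N$ is nonexpansive. This is the one place where a word of care is needed: the existential quantifier ``there exists a nonexpansive $N$'' in \cref{def:con:nonexp} collapses, because $N$ is forced by $T$ and $\alpha$.

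Next I would compute, for arbitrary $x,y\in X$, writing $d := x-y$ and $s := (\Id-T)x - (\Id-T)y$,
\[
\normsq{Nx - Ny} = \normsq{d - \tfrac{1}{\alpha}s}
= \normsq{d} - \tfrac{2}{\alpha}\innp{d,s} + \tfrac{1}{\alpha^2}\normsq{s}.
\]
Consequently $\normsq{Nx-Ny}\le\normsq{x-y}$ holds for all $(x,y)\in X\times X$ if and only if $\tfrac{1}{\alpha^2}\normsq{s}\le\tfrac{2}{\alpha}\innp{d,s}$ holds for all $(x,y)\in X\times X$; multiplying this last inequality by the positive constant $\tfrac{\alpha}{2}$ turns it into
\[
\tfrac{1}{2\alpha}\normsq{(\Id-T)x - (\Id-T)y} \le \innp{x-y,\,(\Id-T)x - (\Id-T)y},
\]
which is precisely the assertion that $\Id-T$ is $\tfrac{1}{2\alpha}$-cocoercive (in the sense recalled in the footnote of the preceding remark, i.e.\ $\tfrac{1}{2\alpha}(\Id-T)$ wait—equivalently, that $2\alpha(\Id-T)$ is firmly nonexpansive is \emph{not} what we want; the correct reading is the displayed inequality itself).

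Chaining the two equivalences — ``$T$ is $\alpha$-\conic'' $\siff$ ``$N$ is nonexpansive'' $\siff$ ``$\Id-T$ is $\tfrac{1}{2\alpha}$-cocoercive'' — yields the lemma. I do not expect any genuine obstacle: once the auxiliary operator $N$ is pinned down explicitly, the argument is just the expansion above and the sign condition $\alpha>0$, which is exactly why the authors call the proof straightforward.
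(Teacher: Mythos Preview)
Your argument is correct and is exactly the kind of one-line expansion the authors had in mind; the paper in fact omits the proof entirely, calling it ``straightforward,'' so there is nothing to compare against beyond noting that your explicit identification of $N=\Id-\tfrac{1}{\alpha}(\Id-T)$ and the cosine-rule computation is the natural route. The only blemish is the parenthetical stumble over the firm-nonexpansiveness reformulation of cocoercivity (for the record: with $\beta=\tfrac{1}{2\alpha}$ the footnote gives $\tfrac{1}{2\alpha}(\Id-T)$ firmly nonexpansive, not $2\alpha(\Id-T)$), but since you correctly anchor the conclusion to the displayed inequality this does not affect the proof.
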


\begin{lemma}
\label{lem:lip:cute}
Let $D$ be a nonempty subset of
$X$, let $T\colon D\to X$,
let $N \colon  D\to X$,
 let $\alpha\in\left[1,+\infty\right[$
and set $T=(1-\alpha)\Id+\alpha N$.
Suppose that  $N\colon D\to X$ is nonexpansive.
Then
$(\forall (x,y)\in D\times D)$ we have
\begin{equation}
\norm{Tx-Ty}\le (2\alpha-1)\norm{x-y},
\end{equation}
i.e., $T$ is Lipschitz with constant $2\alpha-1$.
\end{lemma}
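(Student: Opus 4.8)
The plan is to expand the displacement $Tx-Ty$ using the defining relation $T=(1-\alpha)\Id+\alpha N$ and then estimate each term by the triangle inequality. First I would write, for arbitrary $(x,y)\in D\times D$,
\begin{equation*}
Tx-Ty=(1-\alpha)(x-y)+\alpha(Nx-Ny),
\end{equation*}
which is immediate from the definition of $T$. Applying the triangle inequality and the homogeneity of the norm gives
\begin{equation*}
\norm{Tx-Ty}\le \abs{1-\alpha}\,\norm{x-y}+\alpha\,\norm{Nx-Ny}.
\end{equation*}

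Next I would use the two hypotheses. Since $\alpha\in\left[1,+\infty\right[$, we have $\abs{1-\alpha}=\alpha-1$. Since $N$ is nonexpansive, $\norm{Nx-Ny}\le\norm{x-y}$, and because $\alpha>0$ this may be multiplied through without changing the inequality. Combining these two observations with the previous display yields
\begin{equation*}
\norm{Tx-Ty}\le (\alpha-1)\norm{x-y}+\alpha\norm{x-y}=(2\alpha-1)\norm{x-y},
\end{equation*}
which is exactly the claimed bound, i.e.\ $T$ is Lipschitz with constant $2\alpha-1$.

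There is no real obstacle here: the only point requiring the hypothesis $\alpha\ge 1$ is the identity $\abs{1-\alpha}=\alpha-1$ (for $\alpha\in\left]0,1\right[$ one would instead recover $\norm{Tx-Ty}\le\norm{x-y}$, consistent with averagedness). Everything else is the triangle inequality and nonexpansiveness of $N$, so the argument is three lines and no case analysis beyond noting the sign of $1-\alpha$ is needed.
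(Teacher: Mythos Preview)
Your proof is correct and is exactly the natural argument; the paper itself omits the proof as straightforward, so there is nothing to compare against beyond noting that your triangle-inequality computation is presumably what the authors had in mind.
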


One can directly
verify the following result.
\begin{lemma}
\label{lem:gen:Hilb}
Let $(x,y)\in X\times X$
and let $\alpha\in \RR$.
Then
\begin{equation}
\alpha^2\normsq{x}-\normsq{(\alpha-1)x+y}
=2\alpha\innp{x-y,y}-(1-2\alpha)\normsq{x-y}.
\end{equation}
\end{lemma}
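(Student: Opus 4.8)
The plan is to treat this as a pure identity in the real Hilbert space $X$, so the only work is to expand both sides using the polarization identity $\normsq{a+b}=\normsq{a}+2\innp{a,b}+\normsq{b}$ and the bilinearity of $\innp{\cdot,\cdot}$, then collect terms. Concretely, I would first expand the left-hand side: $\normsq{(\alpha-1)x+y}=(\alpha-1)^2\normsq{x}+2(\alpha-1)\innp{x,y}+\normsq{y}$, so that
\begin{equation*}
\alpha^2\normsq{x}-\normsq{(\alpha-1)x+y}=\bigl(\alpha^2-(\alpha-1)^2\bigr)\normsq{x}-2(\alpha-1)\innp{x,y}-\normsq{y}=(2\alpha-1)\normsq{x}-2(\alpha-1)\innp{x,y}-\normsq{y}.
\end{equation*}

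Next I would expand the right-hand side. Write $\normsq{x-y}=\normsq{x}-2\innp{x,y}+\normsq{y}$ and $\innp{x-y,y}=\innp{x,y}-\normsq{y}$, so that
\begin{equation*}
2\alpha\innp{x-y,y}-(1-2\alpha)\normsq{x-y}=2\alpha\innp{x,y}-2\alpha\normsq{y}-(1-2\alpha)\bigl(\normsq{x}-2\innp{x,y}+\normsq{y}\bigr).
\end{equation*}
Collecting the coefficients of $\normsq{x}$, $\innp{x,y}$, and $\normsq{y}$ separately on the right-hand side gives coefficient $-(1-2\alpha)=2\alpha-1$ for $\normsq{x}$; coefficient $2\alpha+2(1-2\alpha)=2-2\alpha=-2(\alpha-1)$ for $\innp{x,y}$; and coefficient $-2\alpha-(1-2\alpha)=-1$ for $\normsq{y}$. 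These match the three coefficients computed for the left-hand side, which proves the identity.

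There is essentially no obstacle here beyond bookkeeping: the statement holds for every real scalar $\alpha$ and every pair $(x,y)$, with no positivity or monotonicity hypotheses, so it is a termwise verification. The only thing to be careful about is sign tracking in the coefficient of $\innp{x,y}$ (the $(\alpha-1)$ versus $(1-\alpha)$ bookkeeping) and making sure the $\normsq{x}$ term on the right, which comes entirely from $-(1-2\alpha)\normsq{x}$, is correctly identified as $(2\alpha-1)\normsq{x}$. Since both sides reduce to $(2\alpha-1)\normsq{x}-2(\alpha-1)\innp{x,y}-\normsq{y}$, the proof is complete.
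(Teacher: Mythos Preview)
Your proof is correct and is precisely the direct verification the paper has in mind; the paper itself omits the proof, stating only that ``one can directly verify'' the identity. Your expansion of both sides to the common form $(2\alpha-1)\normsq{x}-2(\alpha-1)\innp{x,y}-\normsq{y}$ is exactly what is needed.
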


\begin{lemma}
\label{lem:av:ch}
Let $D$ be a nonempty subset of
$X$,
let $N \colon  D\to X$,
 let $\alpha\in \RR$
and set $T=(1-\alpha)\Id+\alpha N$.
Then $N$ is nonexpansive
if and only if
$(\forall (x,y)\in D\times D)$ we have
\begin{equation}
2\alpha\innp{Tx-Ty,(\Id-T)x-(\Id-T)y}\ge (1-2\alpha)\normsq{(\Id-T)x-(\Id-T)y}.
\end{equation}
\end{lemma}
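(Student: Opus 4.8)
The plan is to reduce the claimed equivalence to a single algebraic identity, valid for each fixed pair $(x,y)\in D\times D$, namely
\begin{equation*}
2\alpha\innp{Tx-Ty,(\Id-T)x-(\Id-T)y}-(1-2\alpha)\normsq{(\Id-T)x-(\Id-T)y}=\alpha^2\bk{\normsq{x-y}-\normsq{Nx-Ny}}.
\end{equation*}
Once this identity is in hand, the conclusion is immediate: the left-hand side is nonnegative for every $(x,y)\in D\times D$ precisely when $\normsq{x-y}-\normsq{Nx-Ny}\ge 0$ for every $(x,y)$ (using $\alpha\neq 0$), which is exactly the nonexpansiveness of $N$.

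To establish the identity I would invoke \cref{lem:gen:Hilb} with the substitution ``$x$''$\leftarrow x-y$ and ``$y$''$\leftarrow Tx-Ty$, yielding
\begin{equation*}
\alpha^2\normsq{x-y}-\normsq{(\alpha-1)(x-y)+(Tx-Ty)}=2\alpha\innp{(x-y)-(Tx-Ty),Tx-Ty}-(1-2\alpha)\normsq{(x-y)-(Tx-Ty)}.
\end{equation*}
The right-hand side here is already the left-hand side of the target identity, since $(x-y)-(Tx-Ty)=(\Id-T)x-(\Id-T)y$ and the inner product is symmetric. On the left-hand side, the second norm collapses using $T=(1-\alpha)\Id+\alpha N$: one has $(\alpha-1)(x-y)+(Tx-Ty)=(\alpha-1)(x-y)+(1-\alpha)(x-y)+\alpha(Nx-Ny)=\alpha(Nx-Ny)$, whence $\normsq{(\alpha-1)(x-y)+(Tx-Ty)}=\alpha^2\normsq{Nx-Ny}$. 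Substituting this in produces the identity.

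The only non-routine step is spotting the right substitution into \cref{lem:gen:Hilb}; after that, everything rests on the trivial cancellation $(\alpha-1)+(1-\alpha)=0$ together with $\Id-T=\alpha(\Id-N)$, plus the observation $(\Id-T)x-(\Id-T)y=(x-y)-(Tx-Ty)$. I would therefore write the proof as: fix $(x,y)\in D\times D$, apply \cref{lem:gen:Hilb} with the stated substitution, record the two simplifications above, read off the identity, and conclude the equivalence (the degenerate case $\alpha=0$, in which $T=\Id$ and both displayed quantities vanish identically, being outside the intended scope where $\alpha>0$).
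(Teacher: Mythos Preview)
Your proposal is correct and follows essentially the same route as the paper: both arguments apply \cref{lem:gen:Hilb} with the substitution $(x,y)\mapsto(x-y,\,Tx-Ty)$, use the cancellation $(\alpha-1)(x-y)+(Tx-Ty)=\alpha(Nx-Ny)$ to obtain the identity with right-hand side $\alpha^{2}\bigl(\normsq{x-y}-\normsq{Nx-Ny}\bigr)$, and read off the equivalence. Your explicit remark on the degenerate case $\alpha=0$ is a welcome addition that the paper leaves implicit.
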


\begin{proof}
  Let $(x,y)\in D\times D$.
Applying \cref{lem:gen:Hilb} with $(x,y)$ replaced by
$(x-y, Tx-Ty)$, we learn that
\begin{subequations}
\begin{align}
&\qquad2\alpha\innp{Tx-Ty,(\Id-T)x-(\Id-T)y}- (1-2\alpha)\normsq{(\Id-T)x-(\Id-T)y}\\
&=\alpha^2\normsq{x-y}-\normsq{(\alpha-1)(x-y)+(1-\alpha)(x-y)+\alpha(Nx-Ny)}\\
&=\alpha^2\big(\normsq{x-y}-\normsq{Nx-Ny}\big).
\end{align}
\end{subequations}
Now $N$ is nonexpansive $\siff$ $\normsq{x-y}-\normsq{Nx-Ny}\ge 0$
and the conclusion directly follows.
\end{proof}

We now provide new characterizations
of averaged and nonexpansive
operators.
\begin{corollary}
\label{cor:non:av:ch}
Let $D$ be a nonempty subset of
$X$, let $T\colon D\to X$,
let $\alpha\in\left]0, +\infty\right[$
and let $(x,y)\in D\times D$.
 Then the following hold:
 \begin{enumerate}
 \item
 \label{cor:non:av:ch:i}
$T$ is nonexpansive
$\siff $
$2\innp{Tx-Ty,(\Id-T)x-(\Id-T)y}\ge -\normsq{(\Id-T)x-(\Id-T)y}$.
\item
\label{cor:non:av:ch:ii}
$T$ is $\alpha$-\conic\
$\siff$
 $2\alpha\innp{Tx-Ty,(\Id-T)x-(\Id-T)y}\ge (1-2\alpha)\normsq{(\Id-T)x-(\Id-T)y}$.
\end{enumerate}
\end{corollary}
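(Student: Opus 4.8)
The plan is to deduce both parts of Corollary~\ref{cor:non:av:ch} directly from Lemma~\ref{lem:av:ch}, since that lemma already contains the substantive computation. First I would observe that \ref{cor:non:av:ch:i} is the special case $\alpha=1$ of \ref{cor:non:av:ch:ii} together with Remark~\ref{rem:con:nonexp}: for $\alpha=1$ one has $T=(1-\alpha)\Id+\alpha N=N$, so ``$T$ is $1$-conically nonexpansive'' is just ``$T$ is nonexpansive,'' and the displayed inequality in \ref{cor:non:av:ch:ii} becomes $2\innp{Tx-Ty,(\Id-T)x-(\Id-T)y}\ge -\normsq{(\Id-T)x-(\Id-T)y}$, which is exactly \ref{cor:non:av:ch:i}. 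So it suffices to prove \ref{cor:non:av:ch:ii}.

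For \ref{cor:non:av:ch:ii}, fix $\alpha\in\left]0,+\infty\right[$ and set $N:=\Id+\tfrac1\alpha(T-\Id)$, equivalently $N=(1-\tfrac1\alpha)\Id+\tfrac1\alpha T$, so that $T=(1-\alpha)\Id+\alpha N$ and hence $N$ is the (unique) candidate operator from Definition~\ref{def:con:nonexp}. By definition, $T$ is $\alpha$-conically nonexpansive if and only if this $N$ is nonexpansive. Now apply Lemma~\ref{lem:av:ch} with this choice of $D$, $N$, $\alpha$: it gives that $N$ is nonexpansive if and only if for all $(x,y)\in D\times D$,
\begin{equation*}
2\alpha\innp{Tx-Ty,(\Id-T)x-(\Id-T)y}\ge (1-2\alpha)\normsq{(\Id-T)x-(\Id-T)y},
\end{equation*}
which is precisely the stated characterization. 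Chaining these two equivalences completes \ref{cor:non:av:ch:ii}, and then \ref{cor:non:av:ch:i} follows by setting $\alpha=1$ as above.

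I do not anticipate a genuine obstacle here: the only point requiring a word of care is that Lemma~\ref{lem:av:ch} is stated for \emph{arbitrary} $\alpha\in\RR$ and for \emph{some} decomposition $T=(1-\alpha)\Id+\alpha N$, whereas Definition~\ref{def:con:nonexp} restricts to $\alpha\in\left]0,+\infty\right[$; one must note that for such $\alpha$ the operator $N$ is uniquely determined by $T$ (since $\alpha\ne 0$), so ``there exists a nonexpansive $N$ with $T=(1-\alpha)\Id+\alpha N$'' is equivalent to ``the specific $N:=(1-\tfrac1\alpha)\Id+\tfrac1\alpha T$ is nonexpansive,'' which is exactly the hypothesis format of Lemma~\ref{lem:av:ch}. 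Everything else is a direct substitution, so the corollary is essentially a restatement of Lemma~\ref{lem:av:ch} in the language of Definition~\ref{def:con:nonexp}.
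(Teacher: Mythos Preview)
Your proposal is correct and matches the paper's own proof: both parts are obtained by direct application of Lemma~\ref{lem:av:ch}, with \ref{cor:non:av:ch:i} being the case $\alpha=1$. The only cosmetic difference is ordering---the paper derives \ref{cor:non:av:ch:i} first and then \ref{cor:non:av:ch:ii}, whereas you derive \ref{cor:non:av:ch:ii} and specialize---and you spell out explicitly the uniqueness of $N$ when $\alpha\neq 0$, which the paper leaves implicit.
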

\begin{proof}
\ref{cor:non:av:ch:i}:
Apply \cref{lem:av:ch} with $\alpha=1$.

\ref{cor:non:av:ch:ii}:
A direct consequence of
\cref{lem:av:ch}.
\end{proof}

\begin{prop}
\label{prop:gen:avn}
Let $D$ be a nonempty subset of $X$,
let $T\colon D\to X$, let $\alpha\in \left]0,+\infty\right[$,
set $A=T^{-1}-\Id$
 and set $N=\tfrac{1}{\alpha}T-\tfrac{1-\alpha}{\alpha}\Id$,
 i.e.,
 $T=J_A=(1-\alpha)\Id+\alpha N$.
Then the following hold:
\begin{enumerate}
\item
\label{prop:gen:avn:iii}
$T$ is $\alpha$-\conic\
$\siff$
$N$ is nonexpansive $\siff$
$A$ is $\big(\tfrac{1}{2\alpha}-1\big)$-comonotone.
\item
\label{prop:gen:avn:iv}
{\rm [}$T$ is $\alpha$-\conic\ and $D=X${\rm ]}
$\siff$
{\rm [}$N$ is nonexpansive and $D=X${\rm ] }
$\siff$ $A$ is
maximally
$\big(\tfrac{1}{2\alpha}-1\big)$-comonotone.
\end{enumerate}

\end{prop}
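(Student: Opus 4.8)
The plan is to route everything through the identity $T=J_A$ and the parametrization $\gra A=\menge{(Tz,(\Id-T)z)}{z\in D}$, and then to quote \cref{cor:non:av:ch}. From $A=T^{-1}-\Id$ we get $\Id+A=T^{-1}$, hence $J_A=(\Id+A)^{-1}=T$, and in particular $D=\dom T=\dom J_A=\ran(\Id+A)$. Moreover, for $(x,u)\in X\times X$ one has $u\in Ax\siff x+u\in T^{-1}x\siff [x+u\in D$ and $x=T(x+u)]$; setting $z:=x+u$, this is exactly the parametrization $\gra A=\menge{(Tz,(\Id-T)z)}{z\in D}$. Also, since $\alpha\in\left]0,+\infty\right[$, the relation $T=(1-\alpha)\Id+\alpha N$ pins down $N$ uniquely as the $N$ in the statement, so \cref{def:con:nonexp} gives at once that $T$ is $\alpha$-\conic\ $\siff$ $N$ is nonexpansive, and likewise with the side condition $D=X$ appended on both sides; these are the first equivalences in \ref{prop:gen:avn:iii} and \ref{prop:gen:avn:iv}.

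For the remaining equivalence in \ref{prop:gen:avn:iii}, apply \cref{cor:non:av:ch}\ref{cor:non:av:ch:ii}: $N$ is nonexpansive iff, for all $(x,y)\in D\times D$, $2\alpha\innp{Tx-Ty,(\Id-T)x-(\Id-T)y}\ge(1-2\alpha)\normsq{(\Id-T)x-(\Id-T)y}$. By the parametrization of $\gra A$ above, the pairs $(x,y)\in D\times D$ are precisely the pairs $\bigl(Tx,(\Id-T)x\bigr),\bigl(Ty,(\Id-T)y\bigr)\in\gra A$; dividing the displayed inequality by $2\alpha>0$ turns it into $\innp{Tx-Ty,(\Id-T)x-(\Id-T)y}\ge\bigl(\tfrac1{2\alpha}-1\bigr)\normsq{(\Id-T)x-(\Id-T)y}$, which is \eqref{eq:def:cohmon} for $A$ with $\rho=\tfrac1{2\alpha}-1$. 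Hence $N$ nonexpansive $\siff$ $A$ is $\bigl(\tfrac1{2\alpha}-1\bigr)$-comonotone, which proves \ref{prop:gen:avn:iii}.

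For \ref{prop:gen:avn:iv}, set $\rho:=\tfrac1{2\alpha}-1$ and observe that $\rho>-1$ for every $\alpha\in\left]0,+\infty\right[$, so the results of \cref{sec:mono:comono} are available. If $D=X$, then $\ran(\Id+A)=X$, and since $A$ is $\rho$-comonotone by \ref{prop:gen:avn:iii}, \cref{prop:surj:ow} shows $A$ is maximally $\rho$-comonotone. Conversely, if $A$ is maximally $\rho$-comonotone, then \cref{cor:surj} gives $D=\dom J_A=\ran(\Id+A)=X$, while maximality entails $\rho$-comonotonicity of $A$, whence $T$ is $\alpha$-\conic\ by \ref{prop:gen:avn:iii}; combined with the reductions of the first paragraph this yields \ref{prop:gen:avn:iv}.

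The one spot needing care --- as opposed to routine algebra --- is the domain bookkeeping in the first paragraph: one must observe that $A=T^{-1}-\Id$ forces the full identity $T=J_A$ (not merely agreement on part of $\dom J_A$), so that $D$ coincides with $\dom J_A=\ran(\Id+A)$ and the standing hypothesis $\rho>-1$ makes \cref{prop:s:v}, \cref{cor:surj} and \cref{prop:surj:ow} directly applicable. Once that is in place, the proof is just the parametrization of $\gra A$ together with a single division by $2\alpha$.
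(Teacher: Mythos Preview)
Your proof is correct and follows essentially the same approach as the paper's: the parametrization of $\gra A$ via $z\mapsto(Tz,(\Id-T)z)$, the inequality characterization from \cref{lem:av:ch}/\cref{cor:non:av:ch}, and the division by $2\alpha$ are the same core moves. The only cosmetic differences are that the paper cites \cref{lem:av:ch} directly rather than \cref{cor:non:av:ch}\ref{cor:non:av:ch:ii}, and for \ref{prop:gen:avn:iv} invokes \cref{thm:Minty:type} in one stroke rather than unpacking it into \cref{prop:surj:ow} and \cref{cor:surj}; your direct derivation of the parametrization from the single-valuedness of $T$ (without first assuming comonotonicity) is arguably a bit cleaner, since it handles both directions of \ref{prop:gen:avn:iii} at once.
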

\begin{proof}
\ref{prop:gen:avn:iii}:
The first equivalence is \cref{def:con:nonexp}.
We now turn to the second equivalence.
``$\RA$": Let $\{(x,u),(y,v)\}\subseteq \gra A$.
Then
$(x,u)=(T(x+u),(\Id-T)(x+u))$
and likewise $(y,v)=(T(y+v),(\Id-T)(y+v))$.
It follows from \cref{lem:av:ch} applied with
$(x,y)$ replaced by $(x+u,y+v)$
that
$2\alpha\innp{x-y,u-v}\ge (1-2\alpha)\normsq{u-v}$.
Since $\alpha>0$, the conclusion
follows by dividing both sides of the last inequality by
$2\alpha$.
``$\LA$":
Using \cref{thm:Minty:type},
we learn that $(\forall(x,y)\in D\times D)$
$\{(Tx,(\Id-T)x),(Ty,(\Id-T)y)\}\subseteq\gra A$
and hence
$\innp{Tx-Ty,(\Id-T)x-(\Id-T)y}\ge \bk{\tfrac{1}{2\alpha}-1}\normsq{(\Id-T)x-(\Id-T)y}$.
Thus
$2\alpha\innp{Tx-Ty,(\Id-T)x-(\Id-T)y}\ge(1-2\alpha)\normsq{(\Id-T)x-(\Id-T)y}$.
Now use \cref{lem:av:ch}.

\ref{prop:gen:avn:iv}:
Note that $\dom N=\dom T=\ran T^{-1}=\ran(\Id+A)$.
Now combine \ref{prop:gen:avn:iii}
 and \cref{thm:Minty:type}.
\end{proof}

\begin{prop}
\label{prop:-0.3:av}
Let $D$ be a nonempty subset of $X$,
let $T\colon D\to X$,
let $\alpha\in \left]0,+\infty\right[$,
set $A=T^{-1}-\Id$,
i.e., $T=J_A$,
and set $\rho=\tfrac{1}{2\alpha}-1>-1$.
Then the following equivalences hold:
\begin{enumerate}
\item
\label{prop:-0.3:conic:ii}
$T$ is $\alpha$-\conic\
$\siff$
$A$ is $\rho$-comonotone.
\item
\label{prop:-0.3:conic:iii}
{\rm [}$T$ is $\alpha$-\conic\ and $D=X$ {\rm ]}
$\siff$ $A$ is
maximally $\rho$-comonotone.
\item
\label{prop:-0.3:nonexp:ii}
$T$ is nonexpansive
$\siff$
$A$ is $\bigl(-\tfrac{1}{2}\bigr)$-comonotone.
\item
\label{prop:-0.3:nonexp:iii}
{\rm [}$T$ is nonexpansive and $D=X$ {\rm ]}
$\siff$ $A$ is
maximally $\bigl(-\tfrac{1}{2}\bigr)$-comonotone.
\end{enumerate}
If we  assume that
$\alpha\in \left]0,1\right[$, equivalently,
$\rho>-\tfrac{1}{2}$, then we additionally
have:
\begin{enumerate}
  \setcounter{enumi}{4}
  \item
\label{prop:-0.3:av:ii}
$T$ is $\alpha$-averaged $\siff$ $A$ is \bmon.
\item
\label{prop:-0.3:av:iii}
{\rm [}$T$ is $\alpha$-averaged and $D=X${\rm ]}
$\siff$ $A$ is \bmaxmon.
\end{enumerate}

\end{prop}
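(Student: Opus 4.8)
The plan is to obtain all six equivalences from \cref{prop:gen:avn} by feeding in the identity $\tfrac{1}{2\alpha}-1=\rho$ and then extracting the special cases via \cref{rem:con:nonexp}; essentially no new computation is required, only careful substitution, since the analytic content is already contained in \cref{lem:av:ch} and \cref{prop:gen:avn}.

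First I would dispose of \ref{prop:-0.3:conic:ii} and \ref{prop:-0.3:conic:iii}. Because $\rho=\tfrac{1}{2\alpha}-1$, the phrase ``$A$ is $\big(\tfrac{1}{2\alpha}-1\big)$-comonotone'' appearing in \cref{prop:gen:avn}\ref{prop:gen:avn:iii} is exactly ``$A$ is $\rho$-comonotone'', so \ref{prop:-0.3:conic:ii} is just a restatement of \cref{prop:gen:avn}\ref{prop:gen:avn:iii} (dropping the middle equivalence about $N$), and likewise \ref{prop:-0.3:conic:iii} is a restatement of \cref{prop:gen:avn}\ref{prop:gen:avn:iv}. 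For \ref{prop:-0.3:nonexp:ii} and \ref{prop:-0.3:nonexp:iii} I would use \cref{rem:con:nonexp}, according to which $T$ is nonexpansive if and only if $T$ is $1$-\conic, and then apply \cref{prop:gen:avn}\ref{prop:gen:avn:iii} (resp.\ \ref{prop:gen:avn:iv}) with the admissible parameter value $1$ in place of $\alpha$: at $\alpha=1$ the auxiliary operator $N=\tfrac1\alpha T-\tfrac{1-\alpha}\alpha\Id$ degenerates to $T$ itself, and the comonotonicity constant becomes $\tfrac{1}{2}-1=-\tfrac12$, which yields precisely the asserted equivalences (the $D=X$ version giving maximality).

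For the final two parts, assume $\alpha\in\left]0,1\right[$. I would first record the parenthetical equivalence: for $\alpha>0$ one has $\alpha<1\;\siff\;\tfrac{1}{2\alpha}>\tfrac12\;\siff\;\rho>-\tfrac12$. Then, since $\alpha\in\left]0,1\right[$, \cref{rem:con:nonexp} gives that $T$ is $\alpha$-averaged if and only if $T$ is $\alpha$-\conic; combining this with \ref{prop:-0.3:conic:ii} (resp.\ \ref{prop:-0.3:conic:iii}) delivers $T$ $\alpha$-averaged $\siff$ $A$ is $\rho$-comonotone, with the $D=X$ version producing maximal $\rho$-comonotonicity. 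The only points requiring attention are purely bookkeeping: verifying that $1\in\left]0,+\infty\right[$ so that \cref{prop:gen:avn} may legitimately be specialized to that value, and checking that the $N$ of \cref{prop:gen:avn} collapses to $T$ exactly there; there is no genuine obstacle beyond this.
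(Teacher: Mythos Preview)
Your proposal is correct and follows essentially the same approach as the paper: items \ref{prop:-0.3:conic:ii}--\ref{prop:-0.3:conic:iii} are read off from \cref{prop:gen:avn}\ref{prop:gen:avn:iii}\&\ref{prop:gen:avn:iv}, and \ref{prop:-0.3:nonexp:ii}--\ref{prop:-0.3:av:iii} are obtained by combining these with \cref{rem:con:nonexp}. Your added checks (that $\alpha=1$ is admissible in \cref{prop:gen:avn}, that $N$ collapses to $T$ there, and the explicit equivalence $\alpha<1\Leftrightarrow\rho>-\tfrac12$) are sound elaborations of the same argument.
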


\begin{proof}
\ref{prop:-0.3:conic:ii}\&\ref{prop:-0.3:conic:iii}:
This follows from
\cref{prop:gen:avn}\ref{prop:gen:avn:iii}\&\ref{prop:gen:avn:iv}.
\ref{prop:-0.3:nonexp:ii}--\ref{prop:-0.3:av:iii}:
Combine \ref{prop:-0.3:conic:ii}
 and
 \ref{prop:-0.3:conic:iii}
 with \cref{rem:con:nonexp}.
\end{proof}
\begin{corollary}{\rm\bf(The characterization corollary).}
\label{cor:eq:nexp:-0.3}
Let $T\colon X\to X$.
Then the following hold:
\begin{enumerate}
  \item
\label{cor:eq:nexp:-0.3:i}
$T$ is nonexpansive if and
only if it is the resolvent of a maximally
$\bigl(-\tfrac{1}{2}\bigr)$-comonotone
operator $A\colon X\rras X$.
  \item
\label{cor:eq:nexp:-0.3:0}
Let $\alpha\in \left]0,+\infty\right[$.
Then
$T$ is $\alpha$-\conic\ if and
only if it is the resolvent of a
\bmon\
operator $A\colon X\rras X$,
where $\rho=\tfrac{1}{2\alpha}-1>-1$
\big(i.e.,
$\alpha=\tfrac{1}{2(\rho+1)}$\big).

\item
\label{cor:eq:nexp:-0.3:ii}
Let $\alpha\in \left]0,1\right[$.
Then $T$ is $\alpha$-averaged if and
only if it is the resolvent of a
\bmon\
operator $A\colon X\rras X$
where $\rho=\tfrac{1}{2\alpha}-1>-\tfrac{1}{2}$
(i.e.,
$\alpha=\tfrac{1}{2(\rho+1)}$).
\end{enumerate}
\end{corollary}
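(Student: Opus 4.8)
The plan is to obtain all three equivalences directly from \cref{prop:-0.3:av} (taken with $D=X$), after reducing the existential statement ``$T$ is the resolvent of \emph{some} operator in a prescribed class'' to a property of the \emph{single} operator $T^{-1}-\Id$.

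First I would record this reduction. Fix $T\colon X\to X$ and put $A:=T^{-1}-\Id$; by \cref{fact:corres} we have $T=J_A$, so $T$ is the resolvent of $A$. Conversely, if $T=J_B$ for \emph{any} $B\colon X\rras X$, then inverting the relation $(\Id+B)^{-1}=J_B=T$ yields $\Id+B=T^{-1}$, hence $B=T^{-1}-\Id=A$. Consequently, for any class $\mathcal P$ of operators, ``$T$ is the resolvent of an operator in $\mathcal P$'' is equivalent to ``$T^{-1}-\Id\in\mathcal P$''. This removes the quantifier and turns each item into an equivalence between a property of $T$ and the corresponding property of $A:=T^{-1}-\Id$, to which \cref{prop:-0.3:av} applies (with $D=X$, which is legitimate since $\dom T=X$).

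Then I would simply cite the relevant parts of \cref{prop:-0.3:av}. For \ref{cor:eq:nexp:-0.3:i}: \cref{prop:-0.3:av}\ref{prop:-0.3:nonexp:iii} says that $[T$ is nonexpansive and $D=X]$ $\siff$ $A$ is maximally $(-\tfrac{1}{2})$-comonotone, which combined with the reduction is exactly the claim. For \ref{cor:eq:nexp:-0.3:0}, with $\alpha\in\opint{0,+\infty}$ and $\rho=\tfrac{1}{2\alpha}-1>-1$: \cref{prop:-0.3:av}\ref{prop:-0.3:conic:ii} says $T$ is $\alpha$-\conic\ $\siff$ $A$ is $\rho$-comonotone. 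For \ref{cor:eq:nexp:-0.3:ii}, restricting to $\alpha\in\opint{0,1}$ (equivalently $\rho>-\tfrac{1}{2}$): \cref{prop:-0.3:av}\ref{prop:-0.3:av:ii} says $T$ is $\alpha$-averaged $\siff$ $A$ is $\rho$-comonotone. In each case the reduction rewrites the right-hand side as ``$T$ is the resolvent of such an $A$'', completing the proof.

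Since everything is essentially bookkeeping, no step poses a genuine obstacle; the one point deserving care is the reduction itself, namely arguing that the operator witnessing $T=J_A$ is forced to equal $T^{-1}-\Id$, so that the characterizations of \cref{prop:-0.3:av} — which are phrased for that specific $A$ — transfer verbatim. I would also remark in passing that, because $\ran(\Id+A)=\dom J_A=\dom T=X$, \cref{prop:surj:ow} promotes $\rho$-comonotonicity of $A$ to \emph{maximal} $\rho$-comonotonicity as soon as $\rho>-1$; hence items \ref{cor:eq:nexp:-0.3:0} and \ref{cor:eq:nexp:-0.3:ii} could equally be stated with ``maximally $\rho$-comonotone'', matching the wording of \ref{cor:eq:nexp:-0.3:i}.
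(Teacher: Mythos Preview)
Your proposal is correct and follows the paper's (implicit) approach: the corollary is stated without proof in the paper, as it is meant to follow directly from \cref{prop:-0.3:av} specialized to $D=X$. Your extra care in eliminating the existential quantifier---observing that $T=J_B$ forces $B=T^{-1}-\Id$---and your closing remark about the automatic upgrade to maximality via \cref{prop:surj:ow} are both sound additions that the paper leaves tacit.
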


\begin{example}
  Suppose that $U$ is a closed linear
  subspace of $X$ and  set $N=2P_U-\Id$.
  Let
  $\alpha\in \left[0,+\infty\right[$,
  set $T_\alpha=(1-\alpha)\Id+\alpha N$,
 and set
   $A_\alpha=(T_\alpha)^{-1}-\Id$.
   Then for every   $\alpha\in \left[0,+\infty\right[$,
   $T_\alpha$ is $\alpha$-conically nonexpansive and
   \begin{equation}
  A_\alpha
  =\begin{cases}
  N_U,&\text{if  }\alpha=\tfrac{1}{2};\\
  \tfrac{2\alpha}{1-2\alpha}P_{U^\perp}, &\text{otherwise}.
  \end{cases}
  \end{equation}
  Moreover, $A_\alpha$ is
  $\bigl(\tfrac{1}{2\alpha}-1\bigr)$-comonotone.
  \end{example}
  \begin{proof}
  First note that
  $T_\alpha=(1-\alpha)\Id+\alpha (2P_U-\Id)
  =(1-2\alpha)\Id+2\alpha P_U$.
  The case $\alpha=\tfrac{1}{2}$ is
  clear by, e.g., \cite[Example~23.4]{BC2017}.
  Now suppose that $\alpha\in \left[0,+\infty\right[ \smallsetminus \{\tfrac{1}{2}\}$,
  and let $y\in X$.
  Then $y\in A_\alpha x$
  $\siff x+y\in (\Id+A_\alpha) x$
  $\siff x=T_\alpha (x+y )=(1-2\alpha)
  (x+y)+2\alpha P_U(x+y)$
  $\siff x=x+y-2\alpha(\Id-P_U)(x+y)$
  $\siff y=2\alpha P_{U^\perp}(x+y)
  =2\alpha P_{U^\perp} x+2\alpha P_{U^\perp}y
  =2\alpha P_{U^\perp} x+2\alpha y$.
  Therefore,
  $y=\tfrac{2\alpha}{1-2\alpha} P_{U^\perp} x$,
   and the conclusion follows in view of
  \cref{cor:eq:nexp:-0.3}\ref{cor:eq:nexp:-0.3:0}.
  \end{proof}

\begin{prop}
\label{prop:av:-0.3}
Let $A\colon X\rras X$
be such that $\dom A\neq \fady$,
let $\rho\in  \left]-1,+\infty\right[$,
 set $D=\ran(\Id+A)$,
 set $T = {J_A}$,
 i.e.,
 $A=T^{-1}-\Id$,
and set $N= 2(\rho+1)T-(2\rho+1)\Id$,
i.e.,
$T=\tfrac{2\rho+1}{2(\rho+1)}\Id+\tfrac{1}{2(\rho+1)}N$.
Then the following equivalences hold:
\begin{enumerate}
\item
\label{prop:av:-0.3:ii}
$A$ is $\rho$-comonotone $\siff$ $N$ is nonexpansive.
\item
\label{prop:av:-0.3:iii}
$A$ is maximally $\rho$-comonotone
$\siff$ $N$ is nonexpansive and $D=X$.
\end{enumerate}

\end{prop}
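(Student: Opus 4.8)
The plan is to obtain \cref{prop:av:-0.3} as an immediate instance of \cref{prop:gen:avn} (equivalently, of \cref{prop:-0.3:av}) under the substitution $\alpha:=\tfrac{1}{2(\rho+1)}$. First I would verify that the hypotheses of \cref{prop:gen:avn} are in force. Since $\rho>-1$, we have $\alpha\in\left]0,+\infty\right[$. Since $\dom A\neq\fady$, pick $(x,u)\in\gra A$; then $x+u\in\ran(\Id+A)$, so $D=\ran(\Id+A)=\dom(\Id+A)^{-1}=\dom J_A=\dom T$ is a nonempty subset of $X$. Finally $J_A^{-1}=\Id+A$ yields $T^{-1}-\Id=A$, which is the standing relation between $T$ and $A$ required there (and already recorded in the statement).

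Next I would record the elementary identities relating $\alpha$, $\rho$, and $N$: one computes $\tfrac{1}{2\alpha}-1=\rho$, $\tfrac1\alpha=2(\rho+1)$, $\tfrac{1-\alpha}{\alpha}=2\rho+1$, and $1-\alpha=\tfrac{2\rho+1}{2(\rho+1)}$. Hence the operator $N=2(\rho+1)T-(2\rho+1)\Id$ of the present statement coincides with $\tfrac1\alpha T-\tfrac{1-\alpha}{\alpha}\Id$, i.e., with the operator $N$ appearing in \cref{prop:gen:avn}, and $T=(1-\alpha)\Id+\alpha N$ holds on $D$. Because $\alpha\neq0$, this $N\colon D\to X$ is the unique operator for which $T=(1-\alpha)\Id+\alpha N$; thus ``$N$ is nonexpansive'' says precisely that $T$ is $\alpha$-\conic, and (since $\dom N=\dom T=D$) ``$N$ is nonexpansive and $D=X$'' says precisely ``$T$ is $\alpha$-\conic\ and $D=X$''.

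With these identifications in hand, \cref{prop:av:-0.3:ii} is just the equivalence ``$N$ nonexpansive $\siff$ $A$ is $\bigl(\tfrac{1}{2\alpha}-1\bigr)$-comonotone'' from \cref{prop:gen:avn}\ref{prop:gen:avn:iii}, read through $\tfrac{1}{2\alpha}-1=\rho$, and \cref{prop:av:-0.3:iii} is the corresponding equivalence from \cref{prop:gen:avn}\ref{prop:gen:avn:iv}. I do not anticipate any genuine obstacle: all the mathematical substance lives in \cref{prop:gen:avn} (which in turn rests on \cref{lem:av:ch} and \cref{thm:Minty:type}), and what remains is the routine bookkeeping above --- nonemptiness of $D$, the identity $A=T^{-1}-\Id$, and the arithmetic linking $\alpha$, $\rho$, and $N$. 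The only place warranting a word of care is the mild abuse of applying the term ``\conic'' (defined in \cref{def:con:nonexp} for self-maps of $X$) to $T\colon D\to X$, but since $\alpha\neq0$ determines $N$ uniquely from $T$ on $D$, this is purely a matter of phrasing.
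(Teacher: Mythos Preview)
Your approach is correct and is essentially the paper's own: both set $\alpha=\tfrac{1}{2(\rho+1)}$ and read off the two equivalences from \cref{prop:gen:avn}\ref{prop:gen:avn:iii}\&\ref{prop:gen:avn:iv}. The one item the paper adds that is missing from your ``hypotheses of \cref{prop:gen:avn}'' checklist is single-valuedness of $T=J_A$: \cref{prop:gen:avn} is stated for a \emph{function} $T\colon D\to X$, whereas a priori $J_A$ could be set-valued. The paper handles this by invoking \cref{prop:s:v} --- when $A$ is $\rho$-comonotone with $\rho>-1$, $J_A$ is at most single-valued --- which covers the forward direction of \ref{prop:av:-0.3:ii}; in the backward direction, nonexpansiveness of $N$ already forces $T$ (hence $J_A$) to be single-valued. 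This is a one-line addition to your bookkeeping, not a change of strategy.
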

\begin{proof}
\ref{prop:av:-0.3:ii}:
Set $\alpha=\tfrac{1}{2(\rho+1)}$
 and note that $\alpha>0$.
It follows from \cref{prop:s:v}
that $T=J_A$ is single-valued.
Now use \cref{prop:gen:avn}\ref{prop:gen:avn:iii}.
\ref{prop:av:-0.3:iii}:
Combine \ref{prop:av:-0.3:ii}
 and \cref{prop:gen:avn}\ref{prop:gen:avn:iv}.
\end{proof}

\begin{prop}
\label{prop:all:oth}
Let $A\colon X\rras X$
be such that $\dom A\neq \fady$,
let $\rho\in  \left]-1,+\infty\right[$,
 set $D=\ran(\Id+A)$,
  set $T={J_A}$,
  i.e.,
  $A=T^{-1}-\Id$,
   and set $\alpha=\tfrac{1}{2(\rho+1)}$.
Then we have the following equivalences:
\begin{enumerate}
\item
\label{prop:all:oth:i:i}
 $A$ is $\rho$-comonotone
$\siff$ $T$ is $\tfrac{1}{2(\rho+1)}$-\conic.
  \item
\label{prop:all:oth:i:ii}
 $A$ is maximally  $\rho$-comonotone
 $\siff$  $T$ is $\alpha$-\conic\ and $D=X$.
\item
\label{prop:all:oth:ii}
 $A$ is $\bigl(-\tfrac{1}{2}\bigr)$-comonotone
 $\siff$ $T$ is nonexpansive.
\item
\label{prop:all:oth:iii}
$A$ is maximally $\bigl(-\tfrac{1}{2}\bigr)$-comonotone
$\siff$  $T$ is nonexpansive and $D=X$.
\item
\label{prop:all:oth:iv}\
{\rm[}$A$ is \bmon\ and $\rho>-\tfrac{1}{2}${\rm]}
$\siff$ $T$ is $\alpha$-averaged.
\item
\label{prop:all:oth:v}\
{\rm [}$A$ is \bmaxmon\ and
$\rho>-\tfrac{1}{2}${\rm]} $\siff$ {\rm[}$T$ is
$\alpha$-averaged and $D=X${\rm]}.
\end{enumerate}
\end{prop}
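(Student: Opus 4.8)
The plan is to obtain all six equivalences by specializing \cref{prop:av:-0.3}, whose hypotheses are identical to the present ones, and then translating between the several flavours of nonexpansiveness of $T$ via \cref{def:con:nonexp} and \cref{rem:con:nonexp}. The central observation is that, with $\alpha=\tfrac{1}{2(\rho+1)}$ (which lies in $\left]0,+\infty\right[$ precisely because $\rho>-1$), one has $\tfrac1\alpha=2(\rho+1)$ and $\tfrac{1-\alpha}\alpha=2\rho+1$, so the operator $N=2(\rho+1)T-(2\rho+1)\Id$ of \cref{prop:av:-0.3} is exactly the unique $N$ for which $T=(1-\alpha)\Id+\alpha N$. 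Hence, reading \cref{def:con:nonexp} in the restricted-domain sense already in force in \cref{prop:gen:avn} (existence of a nonexpansive $N$ on $D$ with $T=(1-\alpha)\Id+\alpha N$), ``$T$ is $\alpha$-\conic'' \emph{is} the statement ``$N$ is nonexpansive''.

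With this dictionary, \cref{prop:av:-0.3}\ref{prop:av:-0.3:ii} gives \ref{prop:all:oth:i:i}, and \cref{prop:av:-0.3}\ref{prop:av:-0.3:iii} gives \ref{prop:all:oth:i:ii} --- here one also uses $\dom T=\dom J_A=\ran(\Id+A)=D$, so that ``$D=X$'' is literally surjectivity of $\Id+A$. The four remaining items are then bookkeeping on the parameter. For $\rho=-\tfrac12$ we get $\alpha=1$; since by \cref{rem:con:nonexp} ``$T$ is nonexpansive'' $\siff$ ``$T$ is $1$-\conic'', items \ref{prop:all:oth:i:i} and \ref{prop:all:oth:i:ii} specialize to \ref{prop:all:oth:ii} and \ref{prop:all:oth:iii}. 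For $\rho>-\tfrac12$ we get $\alpha\in\left]0,1\right[$; since by \cref{rem:con:nonexp} ``$T$ is $\alpha$-averaged'' $\siff$ [``$T$ is $\alpha$-\conic'' and $\alpha\in\left]0,1\right[$], combining with \ref{prop:all:oth:i:i} and \ref{prop:all:oth:i:ii} yields \ref{prop:all:oth:iv} and \ref{prop:all:oth:v}, the side condition $\rho>-\tfrac12$ on the operator side being precisely $\alpha\in\left]0,1\right[$ on the mapping side.

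I do not anticipate a real obstacle: the only delicate point, single-valuedness of $T=J_A$ (needed before any $T$-side assertion is meaningful), has already been absorbed into \cref{prop:av:-0.3} via \cref{prop:s:v}, which applies because $\rho>-1$. What must be executed carefully is merely the correspondence $\rho>-1\leftrightarrow\alpha\in\left]0,+\infty\right[$, $\rho=-\tfrac12\leftrightarrow\alpha=1$, $\rho>-\tfrac12\leftrightarrow\alpha\in\left]0,1\right[$ under the continuous, strictly decreasing substitution $\alpha=\tfrac1{2(\rho+1)}$, together with the consistent use of the loose reading of ``$\alpha$-\conic'' on a possibly proper subset $D$ of $X$.
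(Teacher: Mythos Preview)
Your proposal is correct and is essentially the same argument as the paper's, which simply invokes \cref{prop:-0.3:av} for all six items. You route instead through \cref{prop:av:-0.3} and then translate ``$N$ nonexpansive'' into ``$T$ is $\alpha$-\conic'' via \cref{def:con:nonexp}/\cref{rem:con:nonexp}; since \cref{prop:-0.3:av} and \cref{prop:av:-0.3} are both immediate reformulations of \cref{prop:gen:avn}, the two routes differ only in which packaging of the same computation is cited, and the paper's is marginally shorter because \cref{prop:-0.3:av} already lists the six cases explicitly.
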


\begin{proof}
\ref{prop:all:oth:i:i}--\ref{prop:all:oth:v}:
Use \cref{prop:-0.3:av}.
\end{proof}

\begin{corollary}
Let $A\colon X\rras X$ be maximally \bmon\
and $\rho>-\tfrac{1}{2}$.
Then $J_A$ is $\tfrac{1}{2(\rho+1)}$-averaged.
\end{corollary}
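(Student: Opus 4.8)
The plan is to read this off directly from \cref{prop:all:oth}. With $\alpha:=\tfrac{1}{2(\rho+1)}$ as in that proposition, the first thing I would observe is that the hypothesis $\rho>-\tfrac12$ is equivalent to $\rho+1>\tfrac12$, hence $2(\rho+1)>1$, hence $\alpha\in\left]0,1\right[$; so ``$\alpha$-averaged'' is meaningful in the classical sense and matches the value $\tfrac{1}{2(\rho+1)}$ claimed in the statement.

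Next I would check the standing hypothesis $\dom A\neq\fady$ of \cref{prop:all:oth}: this is automatic here, since if $\gra A=\fady$ then any singleton $\{(x,u)\}$ is $\rho$-comonotone and properly contains $\gra A$, contradicting maximal $\rho$-comonotonicity. So \cref{prop:all:oth} applies to our $A$.

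Finally I would invoke \cref{prop:all:oth}\ref{prop:all:oth:v}: since $A$ is maximally $\rho$-comonotone with $\rho>-\tfrac12$, the left-hand side of that equivalence holds, so the right-hand side holds, namely $T=J_A$ is $\alpha$-averaged (and moreover $D=\ran(\Id+A)=X$). Recalling $\alpha=\tfrac{1}{2(\rho+1)}$, this is exactly the assertion. I do not expect any real obstacle: this corollary is simply the clean specialization of \cref{prop:all:oth} to the maximal case, with the domain bookkeeping absorbed via \cref{thm:Minty:type} and \cref{cor:surj}; all the substantive work was already carried out in \cref{prop:gen:avn} and its consequences.
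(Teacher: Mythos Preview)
Your argument is correct and is exactly how the paper derives this (unproved) corollary: it is the immediate specialization of \cref{prop:all:oth}\ref{prop:all:oth:v} with $\alpha=\tfrac{1}{2(\rho+1)}$, together with the routine check that $\dom A\neq\fady$. The extra remarks about \cref{thm:Minty:type} and \cref{cor:surj} are harmless but unnecessary, since the full-domain conclusion $D=X$ is already packaged into \cref{prop:all:oth}\ref{prop:all:oth:v}.
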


The following corollary provides an alternative proof
to \cite[Proposition~6.9.6]{BurIus}.

\begin{corollary}
\label{cor:zer:cl:con}
Let $A\colon X\rras X$ be maximally \bmon\
 and $\rho\ge -\tfrac{1}{2}$.
Then $\zer A$ is closed and convex.
\end{corollary}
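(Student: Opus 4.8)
The plan is to identify $\zer A$ with the set of fixed points of the resolvent $J_A$, and then to invoke the classical fact that the fixed point set of a nonexpansive self-map of a Hilbert space is closed and convex.

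First I would record that $\rho\ge-\tfrac12>-1$, so \cref{prop:s:v} and the corollary following it guarantee that $T:=J_A\colon X\to X$ is single-valued with full domain. Next, since $A$ is maximally $\rho$-comonotone with $\rho\ge-\tfrac12$, \cref{prop:all:oth} translates this hypothesis into nonexpansiveness of $J_A$: if $\rho=-\tfrac12$ then \cref{prop:all:oth}\ref{prop:all:oth:iii} gives that $T$ is nonexpansive with $D=X$, while if $\rho>-\tfrac12$ then \cref{prop:all:oth}\ref{prop:all:oth:v} gives that $T$ is $\tfrac{1}{2(\rho+1)}$-averaged, hence in particular nonexpansive. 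In both cases $T=J_A\colon X\to X$ is nonexpansive.

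Then I would verify the identity $\zer A=\Fix J_A$: for every $x\in X$, single-valuedness of $J_A$ yields
\begin{equation*}
x\in\zer A\siff 0\in Ax\siff x\in x+Ax=(\Id+A)x\siff x=J_Ax\siff x\in\Fix J_A.
\end{equation*}
Finally, since $J_A\colon X\to X$ is nonexpansive and $X$ is a nonempty closed convex subset of itself, \cite[Corollary~4.24]{BC2017} shows that $\Fix J_A$ is closed and convex; hence so is $\zer A$. I do not anticipate a real obstacle here: the only point deserving attention is the case distinction on $\rho$ when applying \cref{prop:all:oth}, and everything else is a direct translation through the resolvent correspondence established earlier (one could equally route the argument through \cref{cor:eq:nexp:-0.3}).
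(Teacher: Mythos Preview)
Your proof is correct and follows essentially the same route as the paper: identify $\zer A=\Fix J_A$, deduce nonexpansiveness of $J_A$ from \cref{prop:all:oth}, and conclude via the standard closed-convex fixed point result from \cite{BC2017}. The only cosmetic differences are that the paper cites \cite[Corollary~4.14]{BC2017} rather than Corollary~4.24, and it invokes \cref{prop:all:oth}\ref{prop:all:oth:iii} directly (implicitly using that $\rho$-comonotonicity with $\rho\ge-\tfrac12$ implies $(-\tfrac12)$-comonotonicity) rather than splitting into cases.
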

\begin{proof}
It is clear that
$\zer A=\fix J_A$.
The conclusion now follows from
combining \cite[Corollary~4.14]{BC2017}
and \cref{prop:all:oth}\ref{prop:all:oth:iii}.
\end{proof}

Table~\ref{tab:1} below
 summarizes the main results of this section.

\begin{table}[H]
  \resizebox{0.78\textwidth}{!}{\begin{minipage}{\textwidth}
\begin{tabular}{p{3.5cm}|p{2.6cm}p{0.35cm}|p{2.2cm}
  p{0.35cm}|p{3.6cm}p{0.35cm}|p{3.5cm}}
  \toprule
 $\rho$& $A$ & &$A^{-1}$& &$J_A$
 & &  $J_{A^{-1}}$ \\
\hline
 \begin{tikzpicture}[baseline=-0.3ex]
 \draw[>=triangle 45, <->] (-2,0) -- (1.5,0);
 \draw [line width=0.75mm] (0,0)--(1.4,0);
 \foreach \x in {0}
             \draw (\x,1pt) -- (\x,-1pt)
             node[anchor=north]{\footnotesize\x};
 \node[circle,draw=black, fill=white,
 inner sep=0pt,minimum size=5pt] at (0,0) {};
 \end{tikzpicture}
 & $\rho$-cocoercive &$\siff$
 & $\rho$-strongly monotone & $\siff$
 & $\tfrac{1}{2(\rho+1)}$-\conic\
 &$\siff$
 & $(\rho+1)$-cocoercive
  \\
 \begin{tikzpicture}[baseline=-0.3ex]
 \draw[>=triangle 45, <->] (-2,0) -- (1.5,0);
 \draw [line width=0.75mm] (0,0)--(0,0);
 \foreach \x in {0}
             \draw (\x,1pt) -- (\x,-1pt)
             node[anchor=north] {\footnotesize\x};
 \node[circle,draw=black, fill=black,
 inner sep=0pt,minimum size=5pt] at (0,0) {};
 \end{tikzpicture}
 &monotone  &$\siff$
 & monotone &$\siff$
 &firmly nonexpansive&$\siff$
 &  firmly nonexpansive
 \\
 \begin{tikzpicture}[baseline=-0.3ex]
 \draw[>=triangle 45, <->] (-2,0) -- (1.5,0);
 \draw [line width=0.75mm] (-1/2,0)--(0,0);
 \foreach \x in {0,-0.5}
             \draw (\x,1pt) -- (\x,-1pt)
             node[anchor=north] {\footnotesize\x};
             \node[circle,draw=black, fill=white, inner sep=0pt,minimum size=5pt] at (0,0) {};
             \node[circle,draw=black, fill=white, inner sep=0pt,minimum size=5pt] at (-0.5,0) {};
 \end{tikzpicture}
  &$\rho$-comonotone &$\siff$
  & $\rho$-monotone &$\siff$
  &$\tfrac{1}{2(\rho+1)}$-averaged &$\siff$
  & $(\rho+1)$-cocoercive
  \\
           \begin{tikzpicture}[baseline=-0.3ex]
  \draw[>=triangle 45, <->] (-2,0) -- (1.5,0);
  \foreach \x in {-0.5}
  \draw (\x,1pt) -- (\x,-1pt)
              node[anchor=north] {\footnotesize \x};
  \node[circle,draw=black, fill=black,
  inner sep=0pt,minimum size=5pt] at (-0.5,0){};
  \end{tikzpicture}
  & $\rho$-comonotone
  &$\siff$
  & $\rho$-monotone
  &$\siff$
  & nonexpansive
  &$\siff$
  & $\tfrac{1}{2}$-cocoercive
    \\
 \begin{tikzpicture}[baseline=-0.3ex]
 \draw[>=triangle 45, <->] (-2,0) -- (1.5,0);
 \draw [line width=0.75mm] (-1,0)--(-0.5,0);
 \foreach \x in {-1,-0.5}
             \draw (\x,1pt) -- (\x,-1pt)
             node[anchor=north] {\footnotesize \x};
             \node[circle,draw=black, fill=white,
             inner sep=0pt,minimum size=5pt] at
             (-0.5,0) {};
             \node[circle,draw=black, fill=white,
             inner sep=0pt,minimum size=5pt] at
             (-1,0) {};
 \end{tikzpicture}
    & $\rho$-comonotone
   &$\siff$
   & $\rho$-monotone
   &$\siff$
   & $\tfrac{1}{2(\rho+1)}$-\conic\
   &$\siff$
   & $(\rho+1)$-cocoercive
   \\
  \begin{tikzpicture}[baseline=-0.3ex]
  \draw[>=triangle 45, <->] (-2,0) -- (1.5,0);
  \draw [line width=0.75mm] (-1.85,0)--(-1,0);
  \foreach \x in {-1}
  \draw (\x,1pt) -- (\x,-1pt)
              node[anchor=north] {\footnotesize \x};
  \node[circle,draw=black, fill=black,
  inner sep=0pt,minimum size=5pt] at (-1,0){};
  \end{tikzpicture}
   & $\rho$-comonotone
   &$\siff$
   & $\rho$-monotone
   &
   $\RA$
   & may fail to be at most single-valued
   &
   $\siff$
   & may fail to be at most single-valued
    \\
   \toprule
\end{tabular}
\end{minipage}}
\caption{Properties
of an operator $A$ and its inverse
$A^{-1} $ along with the corresponding resolvents $J_A$
and $J_{A^{-1}}$ respectively,
for different values of $\rho\in \RR$.
Here, $A$ satisfies the implication:
$\{(x,u),(y,v)\}\subseteq
\gra A \RA \innp{x-y,u-v}\ge
\rho\normsq{u-v}$.}
\label{tab:1}
\end{table}

\section{Further properties of
the resolvent $J_A$ and the reflected
resolvent $R_A$}
\label{sec:JA:RA}

We start this section with
the following useful lemma.

\begin{lemma}
\label{lem:JA:RA:corr}
Let $T\colon X\to X$, let $\alpha\in
\left[0,1\right[$. Then the following hold:
\begin{enumerate}
\item
\label{lem:JA:RA:corr:ii}
$T$ is $\alpha$-averaged
$\siff$ $ 2T-\Id=(1-2\alpha)\Id +2\alpha N$
 for some nonexpansive
 $N\colon X\to X$.
\item
\label{lem:JA:RA:corr:iv}
$[T=\tfrac{\alpha}{2}(\Id+N)$ and $N$ is nonexpansive$]$ $\siff $ $-(2T-\Id)$ is
$\alpha$-averaged\footnote{This is
also known as
$\alpha$-negatively averaged (see \cite[Definition~3.7]{Gisel17}).},
in which case $T$ is a Banach contraction
with Lipschitz constant $\alpha<1$.
\item
\label{lem:JA:RA:corr:iii}
$T$ is $\tfrac{1}{2}$-strongly monotone
$\siff $ $2T-\Id$ is monotone.
\end{enumerate}
\end{lemma}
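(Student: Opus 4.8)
The plan is to prove each of the three equivalences by directly unwinding the relevant definitions and matching coefficients in the resulting affine identities; nothing is needed beyond the definition of $\alpha$-averagedness (and of $\tfrac{1}{2}$-strong monotonicity) together with the triangle inequality. The one conceptual point worth isolating in advance is that $N\colon X\to X$ is nonexpansive if and only if $-N$ is; this is what drives \ref{lem:JA:RA:corr:iv}.

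For \ref{lem:JA:RA:corr:ii} I would argue both directions by a single-line computation: if $T=(1-\alpha)\Id+\alpha N$ with $N$ nonexpansive, then $2T-\Id=2(1-\alpha)\Id+2\alpha N-\Id=(1-2\alpha)\Id+2\alpha N$, which is the asserted form; conversely, if $2T-\Id=(1-2\alpha)\Id+2\alpha N$ with $N$ nonexpansive, then adding $\Id$ and halving gives $T=(1-\alpha)\Id+\alpha N$, so $T$ is $\alpha$-averaged. For \ref{lem:JA:RA:corr:iv}, if $T=\tfrac{\alpha}{2}(\Id+N)$ with $N$ nonexpansive then $-(2T-\Id)=\Id-\alpha(\Id+N)=(1-\alpha)\Id+\alpha(-N)$, which exhibits $-(2T-\Id)$ as $\alpha$-averaged since $-N$ is nonexpansive; conversely, writing $-(2T-\Id)=(1-\alpha)\Id+\alpha M$ with $M$ nonexpansive and solving for $T$ yields $T=\tfrac{\alpha}{2}(\Id+(-M))$ with $-M$ nonexpansive. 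For the ``in which case'' clause I would combine the triangle inequality with nonexpansiveness of $N$: for all $x,y$,
$\norm{Tx-Ty}=\tfrac{\alpha}{2}\norm{(x-y)+(Nx-Ny)}\le\tfrac{\alpha}{2}\bk{\norm{x-y}+\norm{Nx-Ny}}\le\alpha\norm{x-y}$,
and since $\alpha\in\left[0,1\right[$ this makes $T$ a Banach contraction with constant $\alpha<1$.

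Finally, for \ref{lem:JA:RA:corr:iii} I would expand the monotonicity inequality for $2T-\Id$: for all $x,y\in X$ one has $\innp{x-y,(2T-\Id)x-(2T-\Id)y}=2\innp{x-y,Tx-Ty}-\normsq{x-y}$, so $2T-\Id$ is monotone precisely when $\innp{x-y,Tx-Ty}\ge\tfrac{1}{2}\normsq{x-y}$ for all $x,y$, i.e.\ precisely when $T$ is $\tfrac{1}{2}$-strongly monotone. Since everything reduces to elementary coefficient-matching plus a single application of the triangle inequality, I do not expect any genuine obstacle; the only thing requiring care is bookkeeping of the factors of $2$ and the sign flips in the affine reparametrizations, and remembering that negation preserves nonexpansiveness.
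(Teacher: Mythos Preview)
Your proposal is correct and follows essentially the same route as the paper: each item is handled by direct coefficient-matching in the affine reparametrization $T\leftrightarrow 2T-\Id$, using that negation preserves nonexpansiveness for \ref{lem:JA:RA:corr:iv} and the obvious inner-product expansion (equivalently, that $T-\tfrac{1}{2}\Id$ is monotone iff $2T-\Id$ is) for \ref{lem:JA:RA:corr:iii}. If anything, your write-up is slightly more complete, since you explicitly verify the ``Banach contraction with constant $\alpha$'' clause via the triangle inequality, whereas the paper leaves that part implicit.
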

\begin{proof}
\ref{lem:JA:RA:corr:ii}:
We have: $T$ is $\alpha$-averaged $\siff $
[$T=(1-\alpha)\Id+\alpha N$ and $N$ is nonexpansive]
$\siff$
[$2T-\Id=(2-2\alpha)\Id+2\alpha N-\Id=(1-2\alpha)\Id+2\alpha N$
and $N$ is nonexpansive].

 \ref{lem:JA:RA:corr:iv}:
Indeed,
$[T=\tfrac{\alpha}{2}(\Id+N)$ and $N$ is nonexpansive$]$
$\siff $ $2T-\Id=(\alpha-1)\Id+\alpha N=-((1-\alpha)\Id+\alpha(-N))$,
equivalently $2T-\Id$ is
$\alpha$-negatively averaged.

\ref{lem:JA:RA:corr:iii}:
We have:
$T$ is $\tfrac{1}{2}$-strongly monotone
$\siff$ $T-\tfrac{1}{2}\Id $ is monotone
$\siff$ $2T-\Id $ is monotone.
\end{proof}

Before we proceed, we recall the following
useful fact (see, e.g., \cite[Proposition~4.35]{BC2017}). 
\begin{fact}
\label{Proposition:4.35}
Let $T\colon X\to X$, let $(x,y)\in X\times X$
 and let $\alpha \in]0, 1[$.
 Then 
 \begin{equation}
\text{$T$ is $\alpha$- averaged $\siff$
$\normsq{Tx-Ty}+(1-2\alpha) \normsq{x-y}
 \le 2(1-\alpha)\innp{x-y,Tx-Ty}$.}
 \end{equation}
\end{fact}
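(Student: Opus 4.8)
The plan is to deduce the stated equivalence from the characterization already established in \cref{lem:av:ch} (equivalently \cref{cor:non:av:ch}\ref{cor:non:av:ch:ii}) by an elementary algebraic rearrangement; no new idea is needed, since the statement is precisely \cite[Proposition~4.35]{BC2017}. Since $\alpha\in\left]0,1\right[$, \cref{rem:con:nonexp} tells us that $T$ is $\alpha$-averaged precisely when $T$ is $\alpha$-\conic, i.e., when there is a nonexpansive $N\colon X\to X$ with $T=(1-\alpha)\Id+\alpha N$; here $N$ is forced to be $N=\tfrac{1}{\alpha}T-\tfrac{1-\alpha}{\alpha}\Id$. First I would apply \cref{lem:av:ch} (with $D=X$ and this $N$) to get that $T$ is $\alpha$-averaged if and only if, for every $(x,y)\in X\times X$,
\[
2\alpha\innp{Tx-Ty,(\Id-T)x-(\Id-T)y}\ge (1-2\alpha)\normsq{(\Id-T)x-(\Id-T)y}.
\]

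Next I would fix $(x,y)$, abbreviate $u:=Tx-Ty$ and $w:=x-y$ so that $(\Id-T)x-(\Id-T)y=w-u$, and expand $\normsq{w-u}=\normsq{w}-2\innp{u,w}+\normsq{u}$ on both sides of the displayed inequality. Collecting terms, the coefficient of $\normsq{u}$ is $-2\alpha-(1-2\alpha)=-1$, the coefficient of $\innp{u,w}$ is $2\alpha+2(1-2\alpha)=2(1-\alpha)$, and the coefficient of $\normsq{w}$ is $1-2\alpha$; hence the inequality is equivalent to $2(1-\alpha)\innp{u,w}\ge\normsq{u}+(1-2\alpha)\normsq{w}$, which is exactly the asserted inequality once $u=Tx-Ty$ and $w=x-y$ are substituted back. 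A fully self-contained alternative avoids \cref{lem:av:ch} entirely: write the nonexpansiveness of $N$ as $\normsq{Nx-Ny}\le\normsq{x-y}$, note that $Nx-Ny=\tfrac{1}{\alpha}\bigl((Tx-Ty)-(1-\alpha)(x-y)\bigr)$, multiply through by $\alpha^2>0$, expand $\normsq{(Tx-Ty)-(1-\alpha)(x-y)}$, and simplify using $(1-\alpha)^2-\alpha^2=1-2\alpha$; the same inequality drops out.

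I do not expect any genuine obstacle: both routes reduce to a single expansion of a Hilbert-space norm. The only points deserving attention are the appeal to \cref{rem:con:nonexp} to make \cref{lem:av:ch} applicable with $D=X$ (using that for $\alpha\in\left]0,1\right[$ the operator $N$ associated with an $\alpha$-averaged $T$ is uniquely determined), and careful sign-bookkeeping when collecting the coefficients of $\normsq{u}$, $\innp{u,w}$ and $\normsq{w}$.
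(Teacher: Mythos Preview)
Your argument is correct; both the route via \cref{lem:av:ch} and the direct expansion of $\normsq{Nx-Ny}\le\normsq{x-y}$ yield the stated inequality with no gaps. Note, however, that there is nothing to compare against: the paper does not prove this statement but merely \emph{recalls} it as a fact from \cite[Proposition~4.35]{BC2017}, so your proposal actually supplies a proof where the paper gives only a citation.

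One minor remark on presentation: your first route deduces \cref{Proposition:4.35} from \cref{lem:av:ch}, but in the paper \cref{Proposition:4.35} is stated \emph{before} \cref{lem:av:ch} is ever used to establish averagedness results (and \cref{lem:av:ch} is proved independently via \cref{lem:gen:Hilb}), so there is no circularity. Still, the self-contained alternative you sketch---expanding $\normsq{(Tx-Ty)-(1-\alpha)(x-y)}\le\alpha^2\normsq{x-y}$ directly---is the cleaner choice, since it avoids even the appearance of depending on later material and mirrors the standard textbook derivation.
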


\begin{prop}
\label{p:corres:A:RA}
Let $\alpha\in
\left]0,1\right[$, let $\beta\in
\bigl]-\tfrac{1}{2},+\infty\bigr[$,
let $A\colon X\rras X$
 and
 suppose that $A$ is $\beta$-comonotone.
Then the following hold:
\begin{enumerate}
\item
\label{p:corres:A:RA:ii}
$A$ is $\beta$-comonotone
$\siff$
$J_A$ is $\tfrac{1}{2(1+\beta)}$-averaged
$\siff$
$R_A=\big(1-\tfrac{1}{1+\beta}\big )\Id+\tfrac{1}{1+\beta}N$
for some nonexpansive $N\colon X\to X$.
\item
\label{p:corres:A:RA:iv}
$A$ is $\beta$-strongly monotone
$\siff$
$[J_A=\tfrac{1}{2(\beta+1)}(\Id+N)$ and $N$ is nonexpansive$]$
$\siff$
 $-R_A$ is
$\tfrac{1}{\beta+1}$-averaged,
in which case
$J_A$ is a Banach contraction
with Lipschitz constant $\tfrac{1}{\beta+1}<1$.
\item
\label{p:corres:A:RA:iii}
$A$ is nonexpansive
$\siff$
$J_A$ is $\tfrac{1}{2}$-strongly monotone
$\siff$
$R_A$ is monotone.
\item
\label{p:corres:A:RA:i}
$A$ is $\alpha$-averaged
$\siff$
$R_A$ is $\tfrac{1-\alpha}{\alpha}$-cocoercive.
\item
\label{p:corres:A:RA:i:d}
$A$ is firmly nonexpansive
$\siff$ $R_A$ is firmly nonexpansive.
\end{enumerate}
\end{prop}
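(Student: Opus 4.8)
The plan is to dispatch the five items essentially independently, each reducing to a result already in hand --- chiefly \cref{prop:all:oth}, \cref{lem:JA:RA:corr}, \cref{Proposition:4.35}, and \cref{lem:bmon:inv} --- combined with the bookkeeping furnished by the parametrizations $\gra J_A=\menge{(u+u^*,u)}{(u,u^*)\in\gra A}$ and $\gra R_A=\menge{(u+u^*,u-u^*)}{(u,u^*)\in\gra A}$: if $(u,u^*),(v,v^*)\in\gra A$ and we set $p:=u-v$, $q:=u^*-v^*$, then the corresponding points of $\gra J_A$ differ by $(p+q,\,p)$ and those of $\gra R_A$ by $(p+q,\,p-q)$. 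We may assume $\dom A\neq\fady$, since otherwise every assertion is vacuous. Using the bilinear identities $\innp{p+q,p}=\normsq p+\innp{p,q}$, $\innp{p+q,p-q}=\normsq p-\normsq q$ and $\normsq{p\pm q}=\normsq p\pm 2\innp{p,q}+\normsq q$, any monotonicity/cocoercivity/averagedness inequality phrased on $J_A$ or $R_A$ becomes an equivalent inequality phrased on $\gra A$, and conversely; this is the only mechanism needed.

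For \ref{p:corres:A:RA:ii}: the first equivalence is \cref{prop:all:oth}\ref{prop:all:oth:iv} with $\rho=\beta>-\tfrac12$ (note $\tfrac1{2(\rho+1)}=\tfrac1{2(1+\beta)}$), and the second is \cref{lem:JA:RA:corr}\ref{lem:JA:RA:corr:ii} applied to $T=J_A$ with $\alpha=\tfrac1{2(1+\beta)}\in\left[0,1\right[$, upon substituting $R_A=2J_A-\Id$, $2\alpha=\tfrac1{1+\beta}$ and $1-2\alpha=1-\tfrac1{1+\beta}$. For \ref{p:corres:A:RA:iii}: the parametrization yields $\innp{x-y,u-v}-\tfrac12\normsq{x-y}=\tfrac12\bigl(\normsq p-\normsq q\bigr)$, so $J_A$ is $\tfrac12$-strongly monotone $\siff$ $A$ is nonexpansive, and the remaining link ``$J_A$ is $\tfrac12$-strongly monotone $\siff$ $R_A$ is monotone'' is \cref{lem:JA:RA:corr}\ref{lem:JA:RA:corr:iii}.

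For \ref{p:corres:A:RA:i}: the parametrization yields $\innp{x-y,R_Ax-R_Ay}=\normsq p-\normsq q$ and $\normsq{R_Ax-R_Ay}=\normsq{p-q}$, so multiplying the defining inequality of $\tfrac{1-\alpha}{\alpha}$-cocoercivity of $R_A$ by $\alpha$ and simplifying with $\alpha\cdot\tfrac{1-\alpha}{\alpha}=1-\alpha$ and $\alpha\bigl(1+\tfrac{1-\alpha}{\alpha}\bigr)=1$ converts it into exactly the characterization $2(1-\alpha)\innp{u-v,Au-Av}\ge\normsq{Au-Av}+(1-2\alpha)\normsq{u-v}$ of $\alpha$-averagedness of $A$ from \cref{Proposition:4.35}; since the implications hold pairwise, this is an honest ``iff''. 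Finally \ref{p:corres:A:RA:i:d} is the case $\alpha=\tfrac12$ of \ref{p:corres:A:RA:i}, where $\tfrac{1-\alpha}{\alpha}=1$ and ``$1$-cocoercive'' means firmly nonexpansive.

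For \ref{p:corres:A:RA:iv} I would first show $A$ is $\beta$-strongly monotone (i.e.\ $A-\beta\Id$ is monotone) $\siff$ $J_A$ is $(1+\beta)$-cocoercive: plugging the pairs $(u,x-u),(v,y-v)\in\gra A$ into the strong monotonicity inequality gives $\innp{x-y,u-v}\ge(1+\beta)\normsq{u-v}$ on $\gra J_A$, and conversely; moreover $J_A$ is single-valued because $\beta>-1$, exactly as in the proof of \cref{prop:s:v}. Then $(1+\beta)$-cocoercivity of $J_A$ says $(1+\beta)J_A$ is firmly nonexpansive, equivalently $(1+\beta)J_A=\tfrac12(\Id+N)$ for some nonexpansive $N$, i.e.\ $J_A=\tfrac1{2(1+\beta)}(\Id+N)$; the equivalence of this last form with ``$-R_A$ is $\tfrac1{1+\beta}$-averaged'' and the Banach-contraction conclusion are then \cref{lem:JA:RA:corr}\ref{lem:JA:RA:corr:iv} applied to $T=J_A$ with $\alpha=\tfrac1{1+\beta}$, since $-R_A=-(2J_A-\Id)$ and $\tfrac{\alpha}{2}=\tfrac1{2(1+\beta)}$. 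The one place that needs care is precisely this item: aligning the scaling conventions of \cref{lem:JA:RA:corr}\ref{lem:JA:RA:corr:iv} ($\tfrac{\alpha}{2}$ versus $\tfrac1{2(1+\beta)}$, and its ``negatively averaged'' reading), passing between firm nonexpansiveness and cocoercivity via the rescaling $(1+\beta)J_A$, and noting that the contraction constant $\tfrac1{1+\beta}$ is genuinely $<1$ only for $\beta>0$; everything else is the routine graph-parametrization bookkeeping of the first paragraph.
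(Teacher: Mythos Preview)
Your proof is correct and rests on the same engine as the paper's: the graph parametrizations $(x,u)\in\gra J_A\Leftrightarrow (u,x-u)\in\gra A$ and $(x,u)\in\gra R_A\Leftrightarrow(\tfrac12(x+u),\tfrac12(x-u))\in\gra A$, followed by routine quadratic bookkeeping and an appeal to \cref{lem:JA:RA:corr}. The organization differs in a few places worth noting. For \ref{p:corres:A:RA:ii} you simply invoke \cref{prop:all:oth}\ref{prop:all:oth:iv}, whereas the paper re-derives the averagedness of $J_A$ from scratch via \cref{Proposition:4.35}; your shortcut is legitimate and cleaner. For \ref{p:corres:A:RA:iv} the paper parametrizes $\gra(-R_A)$ directly and lands on \cref{Proposition:4.35} for $-R_A$, while you instead pass through the equivalent statement ``$J_A$ is $(1+\beta)$-cocoercive'' and then use the firm-nonexpansiveness characterization; both routes are short, and yours makes the Banach-contraction constant transparent. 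For \ref{p:corres:A:RA:iii} your one-line identity $\innp{x-y,u-v}-\tfrac12\normsq{x-y}=\tfrac12(\normsq{p}-\normsq{q})$ is more direct than the paper's detour through \cref{cor:non:av:ch}\ref{cor:non:av:ch:i}. For \ref{p:corres:A:RA:i} and \ref{p:corres:A:RA:i:d} the arguments coincide. Your closing remark that item~\ref{p:corres:A:RA:iv} implicitly needs $\beta>0$ (so that $\tfrac{1}{1+\beta}\in\left]0,1\right[$ and \cref{lem:JA:RA:corr}\ref{lem:JA:RA:corr:iv}/\cref{Proposition:4.35} apply) is accurate and is tacit in the paper as well.
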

\begin{proof}
Let
$\{(x,u),(y,v)\}\subseteq X \times X$.
Using \cref{lem:gr:RA}\ref{lem:gr:RA:i},
we have
$
\{(x,u),(y,v)\}\subseteq\gra J_A$
 $\siff \{(u, x-u),
 (v,y-v)\}\subseteq\gra A$,
 which we shall use repeatedly.

\ref{p:corres:A:RA:ii}:
Let
$\{(x,u),(y,v)\}\subseteq\gra J_A$.  We have
\begin{subequations}
   \begin{align}
   &~A \text{~is~} \beta\text{-comonotone~}
   \nonumber\\
   \siff&~\beta \normsq{(x-y)-(u-v)}\le \innp{(x-y)-(u-v), u-v}
  \\
   \siff&~\beta\normsq{x-y}+\beta\normsq{u-v}-2\beta\innp{x-y,u-v}
   \le \innp{x-y,u-v}-\normsq{u-v}
   \\
   \siff&~ \beta\normsq{x-y}+(\beta+1)\normsq{u-v}
   \le (2\beta+1) \innp{x-y,u-v}
   \\
   \siff & ~\normsq{u-v} +\tfrac{\beta}{\beta+1}\normsq{x-y}
   \le \tfrac{2\beta+1}{\beta+1}
   \innp{x-y,u-v}
   \\
      \siff &~ \normsq{u-v} +\big(1-\tfrac{1}{\beta+1}\big)\normsq{x-y}
   \le 2\big(1-\tfrac{1}{2(\beta+1)}\big)
   \innp{x-y,u-v}
   \\
   \siff &~ J_A \text{~is~} \tfrac{1}{2(\beta+1)}\text{-averaged}, \text{}
   \\
     \siff &~ \text{$R_A=\big(1-\tfrac{1}{1+\beta}\big )\Id+\tfrac{1}{1+\beta}N$
for some nonexpansive $N\colon X\to X$,}
   \end{align}
  \end{subequations}
   where the last two equivalences follow from
    \cref{Proposition:4.35}
     and
     \cref{lem:JA:RA:corr}\ref{lem:JA:RA:corr:ii},
     respectively.

       \ref{p:corres:A:RA:iv}:
  We start by proving the equivalence
  of the first and third statement.
(see \cite[Proposition~5.4]{Gisel17} for ``$\RA$" and
also \cite[Proposition~2.1(iii)]{MV18}).
  Let $\{(x,u),(y,v)\}\subseteq \gra (-R_A)$, i.e.,
 $\{(x,-u),(y,-v)\}\subseteq \gra R_A$.
 In view of \cref{lem:gr:RA}\ref{lem:gr:RA:ii},
 this is equivalent to
  $ \{(\tfrac{1}{2}(x-u), \tfrac{1}{2}(x+u)),
 (\tfrac{1}{2}(y-v), \tfrac{1}{2}(y+v))\}\subseteq\gra A$.
We have
\begin{subequations}
     \begin{align}
  & A \text{ is $\beta$-strongly monotone}
  \nonumber\\
    \siff~&
    \innp{(x-y)+(u-v),(x-y)-(u-v)}\ge \beta \normsq{(x-y)-(u-v)}
  \\
    \siff~&
   \normsq{x-y}-\normsq{u-v}\ge \beta\normsq{x-y}+\beta\normsq{u-v}
   -2\beta\innp{x-y,u-v}
  \\
           \siff~&
  2\beta\innp{x-y,u-v} \ge (\beta-1)\normsq{x-y}+(\beta+1)\normsq{u-v}
  \\
         \siff~&
    \tfrac{ 2\beta}{\beta+1}\innp{x-y,u-v}
    \ge \tfrac{\beta-1}{\beta+1}\normsq{x-y}+\normsq{u-v}
  \\
      \siff~&
    2\bk{1-\tfrac{1}{\beta+1}}\innp{x-y,u-v}
    \ge\bk{1-\tfrac{2}{\beta+1}}\normsq{x-y}+\normsq{u-v}
  \\
    \siff~& -R_A \text{~is~} \tfrac{1}{\beta+1}\text{-averaged},
     \end{align}
  \end{subequations}
   where the last equivalence follows from
    \cref{Proposition:4.35}.
    Now apply
     \cref{lem:JA:RA:corr}\ref{lem:JA:RA:corr:iv}
     to prove the equivalence of the second and third statements
     in \ref{p:corres:A:RA:iv}.

   \ref{p:corres:A:RA:iii}:
   Let
$\{(x,u),(y,v)\}\subseteq\gra J_A$
 and note that \cref{lem:gr:RA}\ref{lem:gr:RA:i}
 implies that
 $x-u\in Au$, $y-v\in Av$,
 $2u-x\in (\Id - A)u$
 and
 $2v-y\in (\Id - A)v$.
 It follows from
\cref{cor:non:av:ch}\ref{cor:non:av:ch:i}
applied with $(T,x,y)$ replaced by $(A,u,v)$
that
\begin{subequations}
   \begin{align}
   A \text{~is nonexpansive}
    \siff~& \innp{(x-y)-(u-v),2(u-v)-(x-y)}
    \nonumber
    \\
    &\ge -\tfrac{1}{2}\normsq{2(u-v)-(x-y)}
    \\
    \siff~&-\normsq{x-y}-2\normsq{u-v}+3\innp{x-y,u-v}
    \nonumber\\ 
   &\ge
   -2\normsq{u-v}-\tfrac{1}{2}\normsq{x-y}+2\innp{x-y,u-v}
   \\
 \siff ~& \innp{x-y, u-v}\ge \tfrac{1}{2}\normsq{x-y}
 \\
 \siff ~& J_A \text{~is~} \tfrac{1}{2}\text{-strongly monotone}
 \\
  \siff ~& R_A \text{~is~} \text{ monotone},
   \end{align}
  \end{subequations}
where the last equivalence follows from
\cref{lem:JA:RA:corr}\ref{lem:JA:RA:corr:iii}.

\ref{p:corres:A:RA:i}:
Let
$\{(x,u),(y,v)\}\subseteq X \times X$.
Using \cref{lem:gr:RA} we have
$
\{(x,u),(y,v)\}\subseteq\gra R_A$
 $\siff \big\{\big(\tfrac{1}{2}(x+u), \tfrac{1}{2}(x-u)\big),
 \big(\tfrac{1}{2}(y+v), \tfrac{1}{2}(y-v)\big)\big\}\subseteq\gra A$.
%
%
 Let
$\{(x,u),(y,v)\}\subseteq\gra R_A$.
Applying
\cref{cor:non:av:ch}\ref{cor:non:av:ch:ii}
with $(T,x,y)$ replaced by $\big(A, \tfrac{1}{2}(x+u),\tfrac{1}{2}(y+v)\big)$
and \cref{rem:con:nonexp},
we learn that
\begin{subequations}
   \begin{align}
   A \text{~is~} \alpha\text{-averaged~}
    \siff~&
    2\alpha\innp{\tfrac{1}{2}((x-y)-(u-v)),u-v}
    \ge (1-2\alpha)\normsq{u-v}
    \\
    \siff~&\alpha\innp{x-y,u-v}-\alpha\normsq{u-v}
    \ge (1-2\alpha)\normsq{u-v}
    \\
 \siff ~& \tfrac{\alpha}{1-\alpha}\innp{x-y, u-v}\ge \normsq{u-v},
   \end{align}
  \end{subequations}
 equivalently
  $R_A$ is $\tfrac{1-\alpha}{\alpha}$-cocoercive.
  \ref{p:corres:A:RA:i:d}:
Apply  \ref{p:corres:A:RA:i}
with $\alpha=\tfrac{1}{2}$.
\end{proof}

\begin{rem}
\cref{p:corres:A:RA}\ref{p:corres:A:RA:ii}
generalizes
the conclusion of \cite[Proposition~5.3]{Gisel17}.
Indeed, if $\beta>0$ we have
$A$ is $\beta$-cocoercive,
equivalently $R_A$
is $\tfrac{1}{\beta+1}$-averaged.
\end{rem}

\section{$\rho$-monotone and $\rho$-comonotone linear
operators}
\label{sec:linear}
Let
$A\in \RR^{n\times n}$
 and set
 $A_{s}=\tfrac{A+A\tran}{2}$.
 In the following we use
  $\lam_{\min}(A)$ and $\lam_{\max}(A)$ to
denote the smallest and largest eigenvalue of
$A$, respectively, provided all eigenvalues of $A$ are real.
\begin{prop}
\label{prop:linear:hypo}
Suppose that $A\in \RR^{n\times n}$.
Then
the following hold:
\begin{enumerate}
\item
\label{prop:linear:hypo:i}
$A$
is $\rho$-monotone $\siff$
$\lam_{\min}(A_{s})\ge \rho$.
\item
\label{prop:linear:hypo:ii}
$A$
is $\rho$-comonotone $\siff$
$\lam_{\min}(A_{s}-\rho A\tran A)\ge 0$.
\end{enumerate}
\end{prop}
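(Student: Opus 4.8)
The plan is to exploit the linearity of $A$ to collapse the two-point defining inequalities into single quadratic-form inequalities in one vector variable, and then to translate positive semidefiniteness of the resulting symmetric matrices into statements about their smallest eigenvalue.

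For \ref{prop:linear:hypo:i}, observe that since $A$ is linear, $Ax-Ay=A(x-y)$, so $A$ is $\rho$-monotone if and only if $\innp{z,Az}\ge\rho\normsq{z}$ for every $z\in\RR^n$ (substitute $z=x-y$, and note the converse is immediate). The key point is that the quadratic form attached to $A$ depends only on its symmetric part: $\innp{z,Az}=\innp{z,A_sz}$ for all $z$, because the skew-symmetric part satisfies $\innp{z,(A-A_s)z}=0$. Hence the condition is equivalent to $\innp{z,(A_s-\rho\Id)z}\ge0$ for all $z$, i.e.\ the \emph{symmetric} matrix $A_s-\rho\Id$ is positive semidefinite, which by the spectral theorem is equivalent to $\lam_{\min}(A_s-\rho\Id)\ge0$, i.e.\ $\lam_{\min}(A_s)\ge\rho$.

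For \ref{prop:linear:hypo:ii}, note that $A$ is single-valued and linear, so $\gra A=\menge{(x,Ax)}{x\in\RR^n}$, and $\rho$-comonotonicity reads $\innp{x-y,Ax-Ay}\ge\rho\normsq{Ax-Ay}$ for all $x,y$; again setting $z=x-y$ this is equivalent to $\innp{z,Az}\ge\rho\normsq{Az}$ for every $z\in\RR^n$. Now rewrite both sides: $\innp{z,Az}=\innp{z,A_sz}$ as before, and $\normsq{Az}=\innp{Az,Az}=\innp{z,A\tran A z}$. Thus the condition becomes $\innp{z,(A_s-\rho A\tran A)z}\ge0$ for all $z$. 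Since $A_s-\rho A\tran A$ is symmetric, this holds if and only if it is positive semidefinite, equivalently $\lam_{\min}(A_s-\rho A\tran A)\ge0$.

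There is no genuine obstacle here; the proof is a routine verification. The only steps that require a word of care are: (a) justifying the reduction of the two-point conditions to one-point conditions, which is precisely where linearity of $A$ enters; and (b) recording that both $A_s-\rho\Id$ and $A_s-\rho A\tran A$ are symmetric, so that the characterization ``positive semidefinite $\siff$ $\lam_{\min}\ge0$'' applies verbatim.
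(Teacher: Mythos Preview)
Your proof is correct and follows essentially the same route as the paper's own argument: reduce the two-point inequality to a one-point quadratic-form condition via linearity, pass to the symmetric part, and read off positive semidefiniteness as a smallest-eigenvalue bound. Your version is slightly more explicit about the reduction $z=x-y$ and about why the relevant matrices are symmetric, but there is no substantive difference.
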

\begin{proof}
Let $ x\in \RR^n$.
\cref{prop:linear:hypo:i}:
$A$ is $\rho$-monotone
$\siff$
$\innp{x,Ax}\ge \rho \normsq{x}$
$\siff$
$\innp{x,(A-\rho \Id)x}\ge 0$
$\siff$
$\innp{x,(A-\rho \Id)_sx}\ge 0$
$\siff$
$\innp{x,(A_s-\rho \Id)x}\ge 0$
$\siff$
$A_s-\rho \Id\succeq 0$
$\siff$
$A_s\succeq\rho \Id$
$\siff$
$\lam_{\min}(A_s)\ge \rho$.
\cref{prop:linear:hypo:ii}:
$A$ is $\rho$-comonotone
$\siff$
$\innp{x,Ax}\ge \rho \normsq{Ax}$
$\siff$
$\innp{x,(A_s-\rho A\tran A)x}\ge 0$
$\siff$
$A_s-\rho A\tran A\succeq 0$
$\siff$
$\lam_{\min}(A_s-\rho A\tran A)\geq 0$.
\end{proof}

\begin{example}
  \label{ex:linear:rmono:rcomono}
Suppose that $N\colon X\to X$
is continuous and linear 
such that $N^*=-N$
 and $N^2=-\Id$.
 Then $N$ is nonexpansive.
 Moreover,
let $\lambda\in \left[0,1\right[$,
set $T_\lambda=(1-\lambda)\Id+\lambda N$
 and set
 $A_\lambda=(T_\lambda)^{-1}-\Id$.
Then the following hold:
\begin{enumerate}
  \item
\label{ex:linear:rmono:rcomono:i}
  We have
\begin{equation}
A_\lambda=\tfrac{\lambda}{(1-\lambda)^2+\lambda^2}
\big((1-2\lambda)\Id-N\big).
\end{equation}
\item
\label{ex:linear:rmono:rcomono:ii}
$ A_\lambda$ is $\rho$-monotone
with optimal $\rho
=\tfrac{\lambda(1-2\lambda)}{\lam^2+(1-\lam)^2}$.
\item
\label{ex:linear:rmono:rcomono:iii}
$ A_\lambda$ is $\rho$-comonotone
with optimal $\rho=
\tfrac{1-2\lambda}{2\lambda}
$.
\end{enumerate}
\end{example}
\begin{proof}
Let $x\in X$.
Then $\norm{Nx}^2
=\innp{Nx,Nx}
=\innp{x,N^*Nx}
=\innp{x,-N^2x}
=\innp{x,x}=\normsq{x}
$.
 Hence $N$ is nonexpansive;
in fact, $N$ is an isometry.
Now set
\begin{equation}
B_\lambda = \tfrac{\lambda}{(1-\lambda)^2+\lambda^2}
\big((1-2\lambda)\Id-N\big).
\end{equation}
\cref{ex:linear:rmono:rcomono:i}:
We have
\begin{subequations}
\begin{align}
(\Id+B_\lambda)T_\lambda
&=\left(\Id+\tfrac{\lambda}{(1-\lambda)^2
+\lambda^2}\big((1-2\lambda)\Id-N\big)\right)
\big((1-\lambda)\Id+\lambda N\big)
\\
&=\tfrac{1}{(1-\lambda)^2+\lambda^2}
\big((1-\lambda)\Id-\lambda N\big)\big((1-\lambda)\Id+\lambda N\big)
\\
&=\tfrac{1}{(1-\lambda)^2+\lambda^2}
\big((1-\lambda)^2\Id-\lambda^2N^2\big)=\Id.
\end{align}
\end{subequations}
Similarly, one can show that
$T_\lambda(\Id+B_\lambda)=\Id$
and the conclusion follows.

\cref{ex:linear:rmono:rcomono:ii}:
Using
\cref{ex:linear:rmono:rcomono:i},
we have
\begin{subequations}
\begin{align}
\innp{x, A_\lambda x}
&= \frac{\lambda}{(1-\lambda)^2+\lambda^2}
\big((1-2\lambda)\norm{x}^2-\innp{Nx,x}\big)\\
&= \frac{\lambda(1-2\lambda)}{(1-\lambda)^2+\lambda^2}
\norm{x}^2.
  \label{sub:eq:lin}
\end{align}
\end{subequations}

\cref{ex:linear:rmono:rcomono:iii}:
Using
\cref{ex:linear:rmono:rcomono:i},
we have
\begin{subequations}
\begin{align}
\normsq{A_\lambda x}
&= \frac{\lambda^2}{((1-\lambda)^2+\lambda^2)^2}
\big((1-2\lam)^2\normsq{x}+\normsq{Nx}\big)
\\
&= \frac{\lam^2}{((1-\lam)^2+\lam^2)^2}
\big((1-2\lam)^2+1\big)\normsq{x}.
\end{align}
\end{subequations}
Therefore, combining with
\cref{sub:eq:lin}
we obtain
\begin{subequations}
\begin{align}
\innp{x, A_\lambda x}
&=
\frac{(1-2\lam)((1-\lam)^2+\lam^2)}{\lam ((1-2\lam)^2+1)}
\cdot
\frac{\lam^2((1-2\lam)^2+1)}{((1-\lam)^2+\lam^2)^2}
\normsq{x}
\\
&=\frac{(1-2\lam)((1-\lam)^2+\lam^2)}{\lam ((1-2\lam)^2+1)}
\normsq{A_\lambda x},\\
&=\frac{1-2\lambda}{2\lambda}
\normsq{A_\lambda x},
\end{align}
\end{subequations}
and the conclusion follows.
\end{proof}

\section{Hypoconvex functions}
\label{sec:hypocon}
In this section, we apply results in the previous sections
to characterize proximal mappings of
hypoconvex functions.
We shall assume that
$f\colon X\to \left]-\infty,+\infty\right]$ is a proper
lower semicontinuous function minorized
by a concave quadratic function:
$\exists \nu\in\RR, \beta\in  \RR, \alpha\geq 0$ such that
$$(\forall x\in X)\quad
f(x)\geq -\alpha\|x\|^2-\beta\|x\|+\nu.$$
For $\mu>0$, the Moreau envelope of $f$ is defined
by
$$e_{\mu}f(x)=\inf_{y\in X}\Big(f(y)+\frac{1}{2\mu}\|x-y\|^2\Big),$$
and the associated proximal mapping $\prox_{\mu f}$ by
\begin{equation}
\prox_{\mu f}(x)
=\underset{y\in X}{\argmin}\Big(f(y)+\frac{1}{2\mu}\|x-y\|^2\Big),
\end{equation}
where $x\in X$. We shall use $\partial f$ for 
the subdifferential mapping from convex analysis.
\begin{definition}\label{def:abst}
An abstract subdifferential $\pars$ associates a subset $\pars
f(x)$ of $X$ to
$f$ at $x\in X$, and it 
satisfies the following properties:
\begin{enumerate}
\item\label{i:p1} $\pars f=\partial f$ if $f$ is a proper lower semicontinuous convex function;
\item\label{i:p1.5} $\pars f=\nabla f$ if $f$ is continuously differentiable;
\item\label{i:p2} $0\in\pars f(x)$ if $f$ attains a local minimum at $x\in\dom f$;
\item\label{i:p3} for every $\beta\in\RR$, 
$$\pars\Big(f+\beta\frac{\|\cdot-x\|^2}{2}\Big)=\pars f +\beta (\Id-x).$$
\end{enumerate}
\end{definition}
The Clarke--Rockafellar subdifferential,
Mordukhovich subdifferential, and Fr\'echet subdifferential all
satisfy Definition~\ref{def:abst}\ref{i:p1}--\ref{i:p3},
see, e.g., \cite{Clarke90}, \cite{Mord06, Mord18},
so they are $\pars$. Related but different abstract subdifferentials have been
used in \cite{aussel95, ioffe84, thibault95}.

Recall that $f$
is $\tfrac{1}{\lambda}$-hypoconvex (see \cite{Rock98, Wang2010}) if
\begin{equation}
  f((1-\tau)x+\tau y)\le (1-\tau) f(x)
  +\tau f(y) +\frac{1}{2\lambda}\ \tau(1-\tau)
  \norm{x-y}^2,
\end{equation}
for all $(x,y)\in X\times X$ and $\tau\in \left]0,1\right[$.

\begin{proposition}\label{p:sub:for}
 If $f\colon X\to \left]-\infty,+\infty\right]$ is a proper
 lower semicontinuous $\frac{1}{\lambda}$-hypoconvex function, then
\begin{equation}\label{e:hypoc:sub}
\pars f=\partial\Big(f+\frac{1}{2\lambda}\|\cdot\|^2\Big)-\frac{1}{\lambda}\Id.
\end{equation}
Consequently, for a hypoconvex function the Clarke--Rockafellar,
Mordukhovich,
and Fr\'echet subdifferential operators all coincide.
\end{proposition}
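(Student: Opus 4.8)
The plan is to reduce the statement to the convex case via the classical fact that a $\tfrac{1}{\lambda}$-hypoconvex function is turned into a convex function by adding the quadratic $\tfrac{1}{2\lambda}\normsq{\cdot}$. Accordingly, set $g:=f+\tfrac{1}{2\lambda}\normsq{\cdot}$. First I would verify that $g$ is convex: fixing $(x,y)\in X\times X$ and $\tau\in\zeroun$ and combining the defining inequality of $\tfrac{1}{\lambda}$-hypoconvexity with the identity $\normsq{(1-\tau)x+\tau y}=(1-\tau)\normsq{x}+\tau\normsq{y}-\tau(1-\tau)\normsq{x-y}$, the term $\tfrac{1}{2\lambda}\tau(1-\tau)\normsq{x-y}$ cancels and one is left with $g((1-\tau)x+\tau y)\le(1-\tau)g(x)+\tau g(y)$. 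Moreover $g$ is proper, since $\dom g=\dom f\neq\fady$, and lower semicontinuous, being the sum of the lower semicontinuous $f$ and the continuous function $\tfrac{1}{2\lambda}\normsq{\cdot}$; hence $g$ is a proper lower semicontinuous convex function.

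Next I would play the two relevant axioms of the abstract subdifferential against each other. On the one hand, since $g$ is convex, Definition~\ref{def:abst}\ref{i:p1} gives $\pars g=\partial g=\partial\bigl(f+\tfrac{1}{2\lambda}\normsq{\cdot}\bigr)$. On the other hand, applying Definition~\ref{def:abst}\ref{i:p3} to $f$ with $x=0$ and $\beta=\tfrac{1}{\lambda}$ gives $\pars g=\pars\bigl(f+\tfrac{1}{\lambda}\tfrac{\normsq{\cdot}}{2}\bigr)=\pars f+\tfrac{1}{\lambda}\Id$. Equating the two descriptions of $\pars g$ and rearranging yields precisely \eqref{e:hypoc:sub}.

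For the ``consequently'' assertion, the point is that the right-hand side of \eqref{e:hypoc:sub}, namely $\partial\bigl(f+\tfrac{1}{2\lambda}\normsq{\cdot}\bigr)-\tfrac{1}{\lambda}\Id$, is expressed solely through the (unambiguous) convex subdifferential $\partial$ and the identity $\Id$, so it does not depend on the chosen abstract subdifferential. Since the Clarke--Rockafellar, Mordukhovich and Fr\'echet subdifferentials all satisfy Definition~\ref{def:abst}\ref{i:p1}--\ref{i:p3}, each of them is an admissible $\pars$ and therefore equals this common right-hand side; hence they all coincide.

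I do not expect a serious obstacle here: the argument is essentially bookkeeping once the quadratic shift is identified. The one point that needs genuine care is checking that $g$ is a bona fide \emph{proper lower semicontinuous} convex function, so that Definition~\ref{def:abst}\ref{i:p1} actually applies; this is exactly where the properness of $f$, the lower semicontinuity hypothesis, and (implicitly) the positivity of $\lambda$ enter.
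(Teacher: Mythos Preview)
Your proposal is correct and follows essentially the same approach as the paper: set $g=f+\tfrac{1}{2\lambda}\normsq{\cdot}$, invoke Definition~\ref{def:abst}\ref{i:p1} on the convex function $g$ to get $\pars g=\partial g$, and Definition~\ref{def:abst}\ref{i:p3} with $\beta=\tfrac{1}{\lambda}$ and $x=0$ to get $\pars g=\pars f+\tfrac{1}{\lambda}\Id$, then equate. The paper's proof is terser, simply asserting that $g$ is convex rather than verifying it, whereas you spell out the convexity, properness and lower semicontinuity checks needed for \ref{i:p1} to apply; this extra care is warranted and does not change the underlying argument.
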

\begin{proof}
 For the convex function $f+\frac{1}{2\lambda}\|\cdot\|^2$, apply Definition~\ref{def:abst}\ref{i:p1} and
 \ref{i:p3} to obtain
$$\partial\Big(f+\frac{1}{2\lambda}\|\cdot\|^2\Big)
=\pars\Big(f+\frac{1}{2\lambda}\|\cdot\|^2\Big)
=\pars f+\frac{1}{\lambda}\Id$$
from which \eqref{e:hypoc:sub} follows.
\end{proof}

Let $f^*$ denote the Fenchel conjugate of $f$.
The following result is well known in $\RR^n$, see, e.g.,
\cite[Exercise~12.61(b)(c), Example~11.26(d)~and~Proposition~12.19]{Rock98}, and \cite{Wang2010}.
In fact, it also holds in a Hilbert space.

\begin{prop}\label{prop:hypo:func}
The following are equivalent:
\begin{enumerate}
  \item
  \label{prop:hypo:func:i}
  $f$ is $\tfrac{1}{\lambda}$-hypoconvex.
  \item
    \label{prop:hypo:func:ii}
  $f+\tfrac{1}{2\lam}
  \normsq{\cdot}$ is convex.
  \item
    \label{prop:hypo:func:iii}
  $\Id+\lambda \pars f$ is maximally monotone.
  \item
    \label{prop:hypo:func:vi}
$(\forall \mu\in \left] 0,\lam\right[)$
  $\prox_{\mu f}$ is $\lambda/(\lambda-\mu)$-Lipschitz
continuous with
\begin{equation}
  \label{eq:prox:res}
  \prox_{\mu f}
=J_{\mu\pars f}
=(\Id+\mu
\pars f )^{-1}.
\end{equation}

 \item
    \label{prop:hypo:func:v}
$(\forall \mu\in \left] 0,\lam\right[)$
  $\prox_{\mu f}$ is
  single-valued and continuous.
\end{enumerate}
\end{prop}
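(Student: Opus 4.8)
My plan is to prove the cycle $\ref{prop:hypo:func:i}\Leftrightarrow\ref{prop:hypo:func:ii}\Leftrightarrow\ref{prop:hypo:func:iii}\Rightarrow\ref{prop:hypo:func:vi}\Rightarrow\ref{prop:hypo:func:v}\Rightarrow\ref{prop:hypo:func:i}$. Throughout write $g:=f+\tfrac{1}{2\lam}\normsq{\cdot}$; since $f$ is proper and lower semicontinuous, so is $g$. The equivalence $\ref{prop:hypo:func:i}\Leftrightarrow\ref{prop:hypo:func:ii}$ is purely algebraic: using the identity $(1-\tau)\normsq{x}+\tau\normsq{y}-\normsq{(1-\tau)x+\tau y}=\tau(1-\tau)\normsq{x-y}$, the defining inequality of $\tfrac1\lam$-hypoconvexity of $f$ becomes, term by term, the convexity inequality for $g$.

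For $\ref{prop:hypo:func:ii}\Rightarrow\ref{prop:hypo:func:iii}$: under $\ref{prop:hypo:func:ii}$ the function $f$ is $\tfrac1\lam$-hypoconvex, so \cref{p:sub:for} gives $\pars f=\partial g-\tfrac1\lam\Id$, whence $\Id+\lam\pars f=\lam\partial g$; as $g$ is proper lsc convex, $\partial g$ is maximally monotone, and hence so is $\Id+\lam\pars f$. For the converse $\ref{prop:hypo:func:iii}\Rightarrow\ref{prop:hypo:func:ii}$, Definition~\ref{def:abst}\ref{i:p3} (which requires no convexity) gives $\pars g=\pars f+\tfrac1\lam\Id=\tfrac1\lam(\Id+\lam\pars f)$, so $\pars g$ is monotone; invoking the characterization of convex functions by monotonicity of the subdifferential — valid on a Hilbert space for each of the Clarke--Rockafellar, Mordukhovich and Fr\'echet subdifferentials — we conclude that $g$ is convex.

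The implication $\ref{prop:hypo:func:ii}\Rightarrow\ref{prop:hypo:func:vi}$ is done by completing the square. Fix $\mu\in\opint{0,\lam}$ and set $t:=1-\mu/\lam\in\opint{0,1}$. Writing $f=g-\tfrac{1}{2\lam}\normsq{\cdot}$, a short computation gives, for each $x\in X$, $f(y)+\tfrac{1}{2\mu}\normsq{x-y}=g(y)+\tfrac{t}{2\mu}\normsq{y-x/t}+\kappa(x)$ with $\kappa(x)$ independent of $y$, hence $\prox_{\mu f}(x)=\prox_{(\mu/t)g}(x/t)$. Since $g$ is proper lsc convex this is single-valued and $\prox_{(\mu/t)g}$ is nonexpansive, so $\prox_{\mu f}$, being the composition with the dilation $x\mapsto x/t$, is $\tfrac1t=\tfrac{\lam}{\lam-\mu}$-Lipschitz. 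The same algebra at the level of resolvents, $\Id+\mu\pars f=\Id+\mu\big(\partial g-\tfrac1\lam\Id\big)=t\,\Id+\mu\,\partial g=t\big(\Id+\tfrac{\mu}{t}\partial g\big)$ (using \cref{p:sub:for}), gives $J_{\mu\pars f}=\big(\Id+\tfrac{\mu}{t}\partial g\big)^{-1}\circ\tfrac1t\Id=\prox_{(\mu/t)g}(\cdot/t)=\prox_{\mu f}$, which is \eqref{eq:prox:res}. Finally $\ref{prop:hypo:func:vi}\Rightarrow\ref{prop:hypo:func:v}$ is immediate, as Lipschitz maps are continuous.

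The heart of the argument is $\ref{prop:hypo:func:v}\Rightarrow\ref{prop:hypo:func:i}$. Reading the completing-the-square identity backwards, $\prox_{\mu f}(x)=\underset{y}{\argmin}\big(g(y)+\tfrac{t}{2\mu}\normsq{y-x/t}\big)$ with $t=1-\mu/\lam$, so as $\mu$ runs through $\opint{0,\lam}$ the parameter $\gamma:=\mu/t=\lam\mu/(\lam-\mu)$ runs through all of $\opint{0,+\infty}$; thus $\ref{prop:hypo:func:v}$ says precisely that $\prox_{\gamma g}$ is single-valued and continuous for every $\gamma>0$. Using the conjugacy formula $e_\gamma g(x)=\tfrac{1}{2\gamma}\normsq{x}-\big(g+\tfrac{1}{2\gamma}\normsq{\cdot}\big)^*(x/\gamma)$, single-valuedness forces $g+\tfrac{1}{2\gamma}\normsq{\cdot}$ to agree with its closed convex hull at every point of $\ran\prox_{\gamma g}$; since $\prox_{\gamma g}$ is moreover continuous, $\ran\prox_{\gamma g}$ is connected, and after restricting to the affine hull of a putative triple at which $g+\tfrac{1}{2\gamma}\normsq{\cdot}$ would fail to be convex this connectedness rules out such a triple, so $g+\tfrac{1}{2\gamma}\normsq{\cdot}$ is convex for every $\gamma>0$ and hence (letting $\gamma\to+\infty$) $g$ itself is convex. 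Equivalently, one may quote the finite-dimensional characterization in \cite[Exercise~12.61]{Rock98} and \cite{Wang2010} and lift it to the Hilbert space setting. I expect this last implication — extracting convexity from single-valuedness plus continuity of all the proximal mappings $\prox_{\mu f}$, $\mu\in\opint{0,\lam}$ — to be the main obstacle; note that the standing hypothesis that $f$ is minorized by a concave quadratic is what guarantees prox-boundedness, so that $\prox_{\mu f}(x)\neq\fady$ and $e_\mu f$ is real-valued whenever $\mu<\lam$, a fact used both in $\ref{prop:hypo:func:vi}$ and in $\ref{prop:hypo:func:v}\Rightarrow\ref{prop:hypo:func:i}$.
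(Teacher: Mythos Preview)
Your cycle has two genuine gaps.

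First, your $\ref{prop:hypo:func:iii}\Rightarrow\ref{prop:hypo:func:ii}$ invokes ``$\pars g$ monotone $\Rightarrow$ $g$ convex,'' but this is \emph{not} among the four axioms in Definition~\ref{def:abst}; the proposition is stated for the abstract $\pars$, so you may not appeal to extra properties enjoyed by the Clarke--Rockafellar, Mordukhovich or Fr\'echet subdifferentials individually. Without that step your chain never leaves $\ref{prop:hypo:func:iii}$, because what you actually prove next is $\ref{prop:hypo:func:ii}\Rightarrow\ref{prop:hypo:func:vi}$ (your use of \cref{p:sub:for} already presupposes hypoconvexity), not $\ref{prop:hypo:func:iii}\Rightarrow\ref{prop:hypo:func:vi}$. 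The paper instead proves $\ref{prop:hypo:func:iii}\Rightarrow\ref{prop:hypo:func:vi}$ directly from the axioms: Definition~\ref{def:abst}\ref{i:p2}\&\ref{i:p3} give the inclusion $\prox_{\mu f}\subseteq(\Id+\mu\pars f)^{-1}$, and the algebraic identity $\Id+\mu\pars f=\tfrac{\lambda-\mu}{\lambda}\bigl(\Id+\tfrac{\mu}{\lambda-\mu}(\Id+\lambda\pars f)\bigr)$ writes $(\Id+\mu\pars f)^{-1}$ as $J_A\circ\tfrac{\lambda}{\lambda-\mu}\Id$ with $A=\tfrac{\mu}{\lambda-\mu}(\Id+\lambda\pars f)$ maximally monotone by hypothesis. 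This is the same completing-the-square computation you carried out, but launched from $\ref{prop:hypo:func:iii}$ rather than $\ref{prop:hypo:func:ii}$.

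Second, your $\ref{prop:hypo:func:v}\Rightarrow\ref{prop:hypo:func:i}$ is only a sketch, and the sketch does not go through: it is unclear why ``connectedness of $\ran\prox_{\gamma g}$'' should rule out a non-convex triple for $g+\tfrac{1}{2\gamma}\normsq{\cdot}$, and ``lift the finite-dimensional characterization of \cite{Rock98,Wang2010} to Hilbert space'' is a wish, not an argument. The paper's route is quite different and clean: by \cite[Proposition~5.1]{Bernard05}, single-valuedness plus continuity of $\prox_{\mu f}$ make the envelope $e_\mu f$ Fr\'echet differentiable with $\nabla e_\mu f=\mu^{-1}(\Id-\prox_{\mu f})$; the conjugacy formula $e_\mu f(x)=\tfrac{1}{2\mu}\normsq{x}-\bigl(f+\tfrac{1}{2\mu}\normsq{\cdot}\bigr)^*(x/\mu)$ (which you also wrote down) then transfers Fr\'echet differentiability to $\bigl(f+\tfrac{1}{2\mu}\normsq{\cdot}\bigr)^*$; and Str\"omberg's duality \cite[Theorem~1]{Stromberg11} converts this into convexity of $f+\tfrac{1}{2\mu}\normsq{\cdot}$. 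Since this holds for every $\mu\in\left]0,\lambda\right[$, $\ref{prop:hypo:func:ii}$ follows. That Bernard--Thibault\,/\,Str\"omberg combination is the missing key lemma you anticipated needing.
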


 \begin{proof}
 ``\ref{prop:hypo:func:i}$\Leftrightarrow$\ref{prop:hypo:func:ii}":
 Simple algebraic manipulations.

 ``\ref{prop:hypo:func:ii}$\Rightarrow$\ref{prop:hypo:func:iii}": As
 $$\partial\Big(f+\frac{1}{2\mu}\|\cdot\|^2\Big)=
 \pars\Big(f+\frac{1}{2\mu}\|\cdot\|^2\Big)=\pars f +\frac{1}{\mu}\Id$$
 is maximally monotone, $\Id+\mu\pars f$ is maximally monotone.

 ``\ref{prop:hypo:func:iii}$\Rightarrow$\ref{prop:hypo:func:vi}": By Definition~\ref{def:abst}\ref{i:p2} and \ref{i:p3},
 $y\in \prox_{\mu f}(x)$ implies that
 $$0\in\pars\Big(f(y)+\frac{1}{2\mu}\|y-x\|^2\Big)=\pars f(y)+\frac{1}{\mu}(y-x).$$
 Thus, one has
 \begin{equation}\label{e:prox:abst}
 (\forall x\in X)\ \prox_{\mu f}(x)\subseteq (\Id+\mu \pars f)^{-1}(x).
 \end{equation}
  Using
 $$\Id+\mu\pars f=\frac{\lambda-\mu}{\lambda}\Big(\Id+\frac{\mu}{\lambda-\mu}(\Id+\lambda\pars f)\Big)$$
 yields
 $$(\Id+\mu\pars f)^{-1}=J_{A}\circ\Big(\frac{\lambda}{\lambda-\mu}\Id\Big),$$
 where $A=\frac{\mu}{\lambda-\mu}(\Id+\lambda\pars f)$ is maximally monotone by the assumption.
 Since
 $J_{A}$ is nonexpansive on $X$, $(\Id+\mu\pars f)^{-1}$ is
 $\lambda/(\lambda-\mu)$-Lipschitz.
 Together with \eqref{e:prox:abst}, we obtain $\prox_{\mu f}=(\Id+\mu\pars f)^{-1}.$

 ``\ref{prop:hypo:func:vi}$\Rightarrow$\ref{prop:hypo:func:v}": Clear.

``\ref{prop:hypo:func:v}$\Rightarrow$\ref{prop:hypo:func:ii}": Let $x\in X$ and let $\mu\in \left]0,
\lambda\right[$.
  We have
 \begin{equation}
 \label{e:conj}
 e_{\mu}f(x)=\frac{1}{2\mu}\|x\|^2-\Big(f+\frac{1}{2\mu}\|\cdot\|^2\Big)^*\Big(\frac{x}{\mu}\Big),
 \end{equation}
 and $e_{\mu}$ is locally Lipschitz, see, e.g., \cite[Proposition 3.3(b)]{Jourani14}.
By \cite[Proposition 5.1]{Bernard05}, \ref{prop:hypo:func:v}
implies that $e_{\mu}f$ is Fr\'echet differentiable
with $\nabla e_{\mu}f=\mu^{-1}(\Id-\prox_{\mu f})$.
Then $\big(f+\frac{1}{2\mu}\|\cdot\|^2\big)^*$ is
Fr\'echet differentiable by \eqref{e:conj}. It follows from
\cite[Theorem 1]{Stromberg11} that $ f+\frac{1}{2\mu}\|\cdot\|^2$ is convex.
Since this hold for every $\mu\in ]0,\lambda[$, \ref{prop:hypo:func:ii} follows.
 \end{proof}

We now provide a new refined characterization of
hypoconvex functions in terms of the cocoercivity of
their proximal operators; equivalently,
of the conical nonexpansiveness of the
displacement mapping of
their proximal operators.
\begin{theorem}
  \label{prop:hypocon:av}
Let $\mu\in \left] 0,\lambda\right[$.
Then the following are equivalent.
\begin{enumerate}
  \item
  \label{prop:hypocon:av:i}
  $f$ is $\tfrac{1}{\lambda}$-hypoconvex.
  \item
  \label{prop:hypocon:av:ii}
  $\Id-\prox_{\mu f}$ is
  $\tfrac{\lambda}{2(\lambda-\mu)}$-\conic.
\item
\label{prop:hypocon:av:iii}
$\prox_{\mu f}$ is $\tfrac{\lambda-\mu}{\lambda}$-cocoercive.
\end{enumerate}
\end{theorem}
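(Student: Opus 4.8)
The plan is to prove (ii)$\siff$(iii) directly, and then (i)$\siff$(iii) by recognising $\prox_{\mu f}$ as a scaled resolvent of a monotone operator built from $\pars f$. The equivalence (ii)$\siff$(iii) is immediate from \cref{lem:coco:conc}: apply it with $T$ replaced by $\Id-\prox_{\mu f}$ and $\alpha=\tfrac{\lambda}{2(\lambda-\mu)}$, noting that $\Id-T=\prox_{\mu f}$ and $\tfrac{1}{2\alpha}=\tfrac{\lambda-\mu}{\lambda}$; thus $\Id-\prox_{\mu f}$ is $\tfrac{\lambda}{2(\lambda-\mu)}$-\conic\ if and only if $\prox_{\mu f}$ is $\tfrac{\lambda-\mu}{\lambda}$-cocoercive.

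For the implication (i)$\RA$(iii): by \cref{prop:hypo:func}, $\tfrac1\lambda$-hypoconvexity of $f$ is equivalent to maximal monotonicity of $A:=\Id+\lambda\pars f$, and in that case $\prox_{\mu f}=J_{\mu\pars f}$. Writing $\mu\pars f=\tfrac{\mu}{\lambda}(A-\Id)$ one obtains the algebraic identity $\Id+\mu\pars f=\tfrac{\lambda-\mu}{\lambda}\bigl(\Id+\tfrac{\mu}{\lambda-\mu}A\bigr)$, whence $\prox_{\mu f}=J_{\frac{\mu}{\lambda-\mu}A}\circ\bigl(\tfrac{\lambda}{\lambda-\mu}\Id\bigr)$. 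Since $\tfrac{\mu}{\lambda-\mu}A$ is maximally monotone, \cref{fact:corres} and Minty's theorem (\cref{thm:minty}) show that $J_{\frac{\mu}{\lambda-\mu}A}$ is firmly nonexpansive with full domain $X$, and the composition of a firmly nonexpansive operator with the scaling $\tfrac{\lambda}{\lambda-\mu}\Id$ is easily checked to be $\tfrac{\lambda-\mu}{\lambda}$-cocoercive, which is (iii). (Equivalently, through the comonotone machinery: $\Id-\prox_{\mu f}=J_{(\mu\pars f)^{-1}}$ by \cref{prop:gen:min}\ref{prop:gen:min:i}; setting $\rho:=-\mu/\lambda\in\left]-1,0\right[$, \cref{lem:bmax:inv} gives that $(\mu\pars f)^{-1}$ is maximally $\rho$-comonotone, and then \cref{prop:all:oth}\ref{prop:all:oth:i:ii} yields that $\Id-\prox_{\mu f}$ is $\tfrac{1}{2(\rho+1)}=\tfrac{\lambda}{2(\lambda-\mu)}$-\conic, i.e., (ii).)

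For the reverse implication (iii)$\RA$(i): from (iii) the operator $\prox_{\mu f}$ is single-valued, Lipschitz continuous, and defined on all of $X$, and $\tfrac{\lambda-\mu}{\lambda}\prox_{\mu f}$ is firmly nonexpansive on $X$; combining this with the always-valid inclusion $\gra\prox_{\mu f}\subseteq\gra J_{\mu\pars f}$ coming from \eqref{e:prox:abst} one reverses the chain above to conclude that $\Id+\lambda\pars f$ is maximally monotone, hence $f$ is $\tfrac1\lambda$-hypoconvex by \cref{prop:hypo:func}. The step requiring care -- and the main obstacle -- is that the cocoercivity constant $\tfrac{\lambda-\mu}{\lambda}$ in (iii) must be used in full: it is precisely this constant that pins down the sharp hypoconvexity threshold $\tfrac1\lambda$ rather than merely $\tfrac1\mu$. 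Tracking it through the conjugacy identity \eqref{e:conj} together with the Fr\'echet differentiability results of \cite{Bernard05} and \cite{Stromberg11} (exactly as in the proof of \cref{prop:hypo:func}), or alternatively through a maximality argument showing that the maximally monotone operator produced above actually equals $\tfrac{\mu}{\lambda}(\Id+\lambda\pars f)$, is what closes the equivalence.
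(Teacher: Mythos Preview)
Your treatment of (ii)$\siff$(iii) matches the paper's. For (i)$\RA$(iii) your scaling argument is correct and somewhat more direct than the paper; the paper instead runs the comonotone chain you list parenthetically, and it is precisely that chain --- not the scaling --- that delivers the full equivalence (i)$\siff$(ii).

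Your (iii)$\RA$(i) has a genuine gap. ``Reversing the chain'' via scaling produces only a maximally monotone $B$ with $\gra B\subseteq\gra\tfrac{\mu}{\lambda-\mu}(\Id+\lambda\pars f)$; upgrading this inclusion to equality would require $\Id+\lambda\pars f$ to be monotone, which is circular. And the conjugacy/Str\"omberg route applied at the single parameter $\mu$ yields only $\tfrac{1}{\mu}$-hypoconvexity --- exactly the obstacle you flag but do not resolve. The paper's comonotone chain avoids this because each step is a two-sided equivalence preserving the sharp constant: $\Id-J_{\mu\pars f}$ being $\tfrac{\lambda}{2(\lambda-\mu)}$-\conic\ is \emph{equivalent} (via \cref{prop:all:oth} and \cref{lem:bmax:inv}) to $(\mu\pars f)^{-1}$ being maximally $(-\tfrac{\mu}{\lambda})$-comonotone, which is \emph{equivalent} to maximal monotonicity of $\Id+\lambda\pars f$. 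The only missing bridge for the reverse direction is the identity $\prox_{\mu f}=J_{\mu\pars f}$: (iii) makes $\prox_{\mu f}$ single-valued and continuous, so $f$ is at least $\tfrac{1}{\mu}$-hypoconvex, hence $J_{\mu\pars f}\colon X\to X$ is single-valued with full domain, and together with the always-valid inclusion $\prox_{\mu f}\subseteq J_{\mu\pars f}$ this forces equality. Your scaling route cannot recover the sharp threshold because firm nonexpansiveness of a resolvent encodes only monotonicity, whereas the $\alpha$-\conic\ property of $\Id-J_{\mu\pars f}$ encodes precisely the $(-\tfrac{\mu}{\lambda})$-comonotonicity that translates back to $\tfrac{1}{\lambda}$.
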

\begin{proof}
``\ref{prop:hypocon:av:i}$\siff$\ref{prop:hypocon:av:ii}":
Using $0<\tfrac{\mu}{\lambda}<1$ we have
\begin{align*}
  &\qquad\text{$f$ is $\tfrac{1}{\lambda}$-hypoconvex}
  \\
&\siff
\text{$\Id+\lambda \pars f$ is maximally monotone}
\tag{\text{by \cref{prop:hypo:func}}}
\\
&\siff  \text{$\tfrac{\mu}{\lambda}\Id+ \mu\pars f$
is maximally monotone}
\\
&\siff  \text{$\mu\pars f$
is maximally $\bigl(-\tfrac{\mu}{\lambda}\bigr)$-monotone}
\\
&\siff  \text{$(\mu\pars f)^{-1}$
is maximally $\bigl(-\tfrac{\mu}{\lambda}\bigr)$-comonotone}
\tag{\text{by \cref{lem:bmax:inv}}}
\\
&\siff
\text{$J_{(\mu\pars f)^{-1}}$ }
\text{is $\tfrac{\lambda}{2(\lambda-\mu)}$-\conic}
\tag{\text{by \cref{cor:eq:nexp:-0.3}\ref{cor:eq:nexp:-0.3:0}}}
\\
&\siff
\text{$ \Id - J_{\mu\pars f}$ }
\text{is $\tfrac{\lambda}{2(\lambda-\mu)}$-\conic}
\tag{\text{by  \cref{prop:gen:min}\ref{prop:gen:min:i}
}}
\\
&\siff  \text{$\Id-\prox_{\mu f} $
is $\tfrac{\lambda}{2(\lambda-\mu)}$-\conic}
\tag{\text{by \cref{eq:prox:res}}}.
\end{align*}
``\ref{prop:hypocon:av:ii}$\siff$\ref{prop:hypocon:av:iii}":
Use  \cref{lem:coco:conc}.
\end{proof}

\begin{corollary}
  \label{cor:Lips:grad}
Suppose that
$f\colon X\to \RR$ is Fr\'echet differentiable
such that
$\grad f$ is Lipschitz
with a constant $1/\lambda$.
Then the following hold:
\begin{enumerate}
  \item
  \label{cor:Lips:grad:i}
    $\Id +\lambda \grad f$ is maximally monotone.
  \item
  \label{cor:Lips:grad:ii}
  $f$ is $\tfrac{1}{\lambda}$-hypoconvex.
  \item
  \label{cor:Lips:grad:iii}
  $f+\tfrac{1}{2\lambda}
  \normsq{\cdot}$ is convex.
  \item
  \label{cor:Lips:grad:iv}
$(\forall \mu\in \left] 0,\lambda \right[)$
$\prox_{\mu f}$ is
single-valued.
\item
\label{cor:Lips:grad:v}
$(\forall \mu\in \left] 0,\lambda \right[)$
$\prox_{\mu f}$
is $\tfrac{\lambda-\mu}{\lambda}$-cocoercive.
\item
\label{cor:Lips:grad:vi}
$(\forall \mu\in \left] 0,\lambda \right[)$
$
\prox_{\mu f}
=J_{\mu\pars f}
=(\Id+\mu
\grad f )^{-1}.
$
\item
\label{cor:Lips:grad:vii}
$(\forall \mu\in \left] 0,\lambda  \right[)$
$\Id-\prox_{\mu f}$ is
$\tfrac{\lambda}{2(\lambda-\mu)}$-\conic.
\end{enumerate}
\end{corollary}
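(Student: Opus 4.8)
The plan is to recognize that the corollary is, modulo one short computation, a direct consequence of \cref{prop:hypo:func} and \cref{prop:hypocon:av}. First I would record two preliminaries. Since $\grad f$ is $\tfrac{1}{\lambda}$-Lipschitz it is continuous, so $f$ is continuously differentiable and \cref{def:abst}\ref{i:p1.5} gives $\pars f=\grad f$; this in particular identifies the three descriptions of the proximal mapping in \ref{cor:Lips:grad:vi}. Moreover, once we know (from the next step) that $g:=f+\tfrac{1}{2\lambda}\normsq{\cdot}$ is convex and differentiable, $g$ admits an affine minorant through $\grad g(0)$, so $f$ is minorized by a concave quadratic; hence the standing hypotheses of \cref{sec:hypocon} hold and every $\mu\in\left]0,\lambda\right[$ is admissible.

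The only genuinely new step is \ref{cor:Lips:grad:iii}, the convexity of $g$. Here I would compute $\grad g=\grad f+\tfrac{1}{\lambda}\Id$ and observe that, for all $x,y\in X$, $\innp{\grad g(x)-\grad g(y),x-y}=\innp{\grad f(x)-\grad f(y),x-y}+\tfrac{1}{\lambda}\normsq{x-y}\ge 0$, the inequality coming from Cauchy--Schwarz together with $\norm{\grad f(x)-\grad f(y)}\le\tfrac{1}{\lambda}\norm{x-y}$. Thus $\grad g$ is monotone; being in addition continuous and everywhere defined, it is maximally monotone, and a differentiable function with monotone gradient is convex, so $g$ is convex. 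Note that this argument does not use convexity of $f$ itself, which is exactly why the Baillon--Haddad theorem cannot be invoked directly.

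Everything else is then bookkeeping. Part \ref{cor:Lips:grad:ii} is the equivalence \ref{prop:hypo:func:i}$\siff$\ref{prop:hypo:func:ii} of \cref{prop:hypo:func}; part \ref{cor:Lips:grad:i} is \cref{prop:hypo:func}\ref{prop:hypo:func:iii} (equivalently, $\Id+\lambda\pars f=\lambda\,\grad g$ is maximally monotone because $\grad g$ is); parts \ref{cor:Lips:grad:iv} and \ref{cor:Lips:grad:vi} follow from \cref{prop:hypo:func}\ref{prop:hypo:func:v} and \eqref{eq:prox:res} together with $\pars f=\grad f$; and parts \ref{cor:Lips:grad:v} and \ref{cor:Lips:grad:vii} follow from \cref{prop:hypocon:av}\ref{prop:hypocon:av:iii} and \ref{prop:hypocon:av:ii}, since \ref{cor:Lips:grad:ii} places us in the hypothesis of \cref{prop:hypocon:av} for every $\mu\in\left]0,\lambda\right[$. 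I do not expect a real obstacle: the one nontrivial point is the one-line monotonicity estimate above, and the only place to be a little careful is the standard (but citation-worthy) passage from ``$\grad g$ monotone, continuous, everywhere defined'' to ``$\grad g$ maximally monotone'' and ``$g$ convex''.
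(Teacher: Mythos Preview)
Your proposal is correct and follows essentially the same route as the paper: identify $\pars f=\grad f$ via \cref{def:abst}\ref{i:p1.5}, establish the maximal monotonicity of $\Id+\lambda\grad f$ from the Lipschitz hypothesis, and then read off everything else from \cref{prop:hypo:func} and \cref{prop:hypocon:av}. The only cosmetic difference is the entry point: the paper proves \ref{cor:Lips:grad:i} first by observing that $\lambda\grad f$ is nonexpansive and citing the standard fact that $\Id+N$ is maximally monotone for nonexpansive $N$ (\cite[Example~20.29]{BC2017}), whereas you prove \ref{cor:Lips:grad:iii} first via the equivalent one-line Cauchy--Schwarz estimate on $\grad g$; these are the same computation in different clothing.
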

\begin{proof}
Definition~\ref{def:abst}\ref{i:p1.5}
  implies that $(\forall x\in X)$
  $\pars f(x)=\{\grad f(x)\}$.
\ref{cor:Lips:grad:i}:
Indeed,
$\lambda \grad f$ is nonexpansive.
Now the conclusion follows from
\cite[Example~20.29]{BC2017}.
\ref{cor:Lips:grad:ii}--\ref{cor:Lips:grad:vii}:
Combine \ref{cor:Lips:grad:i} with
\cref{prop:hypo:func}
and \cref{prop:hypocon:av}.
\end{proof}

Finally, we give two examples to illustrate our results.

\begin{example}
Suppose that $X=\RR$.
Let $\lambda>0$ and set, for every $\lambda $,
$f_\lambda
\colon x\mapsto
\exp(x)-\tfrac{1}{2\lambda}x^2 $.
Then
$f$ is $\tfrac{1}{\lambda}$-hypoconvex
by \cref{prop:hypo:func},
$f'_\lambda\colon x\mapsto
\exp(x)
-\tfrac{x}{\lambda}$,
and
we have $\Id+\lambda f'_\lambda=\lambda \exp $ is
maximally monotone.
Moreover, for every $\mu\in \left]0,\lambda\right ]$
we have
\begin{subequations}
\begin{align}
\prox_{\mu f_\lambda}(x)
&=\bigl( \Id+\mu f'_\lambda\bigr)^{-1}(x)
=\bigl((1-\tfrac{\mu}{\lambda})\Id+\mu\exp\bigr)^{-1}(x)
\label{se:1}
\\
&=\begin{cases}
\ln\bigl(\tfrac{x}{\mu}\bigr) , &\text{if~~}\mu=\lambda;\\
\tfrac{\lambda x}{\lambda-\mu}
-\operatorname{Lambert}  W
 \bigl(\tfrac{\lambda\mu\exp(\lambda x/(\lambda-\mu ))}{\lambda-\mu }\bigr),
&\text{if $\mu\in \left]0,\lambda\right[$},
\end{cases}
\end{align}
\end{subequations}
where the first identity in \cref{se:1}
follows from \cref{cor:Lips:grad}\ref{cor:Lips:grad:vi}.
\end{example}

\begin{example}
  Let $D$ be a nonempty closed convex subset of $X$,
  let $\lambda>0$ and set, for every $\lambda $,
  $f_\lambda
  =
  \iota_D -\tfrac{1}{2\lambda}\norm{\cdot}^2 $.
  Then
  $f$ is $\tfrac{1}{\lambda}$-hypoconvex
  by \cref{prop:hypo:func},
   and
  $\pars  f_\lambda
  =N_D-\tfrac{1}{\lambda}\Id$
  by
  Proposition~\ref{p:sub:for}.
Moreover,  for every $\lambda>0$,
  we have $\Id+\lambda \pars  f_\lambda=N_D$ is
  maximally monotone.
Finally, using \cref{eq:prox:res}
  and \cite[Example~23.4]{BC2017}
  we have for every $\mu\in \left]0,\lambda\right [$
  \begin{subequations}
  \begin{align}
  \prox_{\mu f_\lambda}
  &=\bigl( \Id+\mu \pars  f_\lambda\bigr)^{-1}
  =\bigl((1-\tfrac{\mu}{\lambda})\Id+\mu N_D\bigr)^{-1}\\
  &=\bigl((1-\tfrac{\mu}{\lambda})(\Id+ N_D)\bigr)^{-1}
  =P_D\circ \bigl(\tfrac{\lambda}{\lambda-\mu}\Id\bigr).
  \end{align}
  \end{subequations}
  In particular, if $D$ is a closed convex cone
  we learn that $  \prox_{\mu f_\lambda}=\tfrac{\lambda}{\lambda-\mu}P_D$.
\end{example}

\section*{Acknowledgment}
HHB, WMM, and XW were partially
supported by the Natural Sciences and Engineering Council
of Canada.

\small

\end{document}